\documentclass[12pt, a4article]{amsart}   	
\usepackage[top=1.15in, bottom=1.15in, left=1.17in, right=1.17in]{geometry}

\usepackage[colorlinks=true, pdfstartview=FitV, linkcolor=blue, citecolor=blue, urlcolor=blue, breaklinks=true]{hyperref}
\usepackage{amssymb,amscd,graphics, amsfonts, multicol}
\usepackage{graphics}
\usepackage{stmaryrd}
\usepackage{DotArrow}
\usepackage{amsmath, amsthm,wasysym}
\usepackage{epsf}
\usepackage{xypic}
\numberwithin{equation}{section}
\usepackage{tikz-cd}
\usepackage{color}
\usepackage[normalem]{ulem}
\theoremstyle{plain}

\newtheorem{theorem}{Theorem}[section]
\newtheorem{corollary}[theorem]{Corollary}
\newtheorem{proposition}[theorem]{Proposition}
\newtheorem{lemma}[theorem]{Lemma}

\theoremstyle{definition}

\newtheorem{remark}[theorem]{Remark}

\newtheorem{example}[theorem]{Example}
\newtheorem{definition}[theorem]{Definition}
\definecolor{fondo}{rgb}{0.898,0.996,0.898}
\newtheorem*{claim}{Claim}

\newcommand{\R}{{\mathbb R}}

\newcommand{\bfa}{{\bf a}}
\newcommand{\bfb}{{\bf b}}

\newcommand{\bfSigma}{{\bf \Sigma}}
\newcommand{\bfDelta}{{\bf \Delta}}

\newcommand{\mcC}{{\mathcal C}}
\newcommand{\mcD}{{\mathcal D}}
\newcommand{\mcF}{{\mathcal F}}
\newcommand{\mcH}{{\mathcal H}}
\newcommand{\mcP}{{\mathcal P}}
\newcommand{\mcQ}{{\mathcal Q}}

\newcommand{\mfS}{{\mathfrak S}}
\newcommand{\ue}{{\underline e}}
\newcommand{\uu}{{\underline u}}
\newcommand{\uv}{{\underline v}}
\newcommand{\uw}{{\underline w}}
\newcommand{\ux}{{\underline x}}
\newcommand{\uy}{{\underline y}}

\newcommand{\MNaR}{M_{N,r}(\R)}

\DeclareMathOperator{\codim}{codim}

\newcommand{\bc}{\mathbf{c}}

\newcommand{\ba}{\mathbf{a}}

\newcommand{\bb}{\mathbf{b}}

\title{Higher rank Gelfand-Kapranov-Zelevinsky fans}
\author{Rocco Chiriv\`i}
\address{Dipartimento di Matematica e Fisica ``Ennio De Giorgi'', Universit\`a del Salento, Lecce, Italy}
\email{rocco.chirivi@unisalento.it}

\author{Martina Costa Cesari}
\address{European Research Council Executive Agency, Brussels, Belgium}
\email{martina.costa.cesari@live.it}

\author{Xin Fang}
\address{Lehrstuhl f\"ur Algebra und Darstellungstheorie, RWTH Aachen, Pontdriesch 10-16, 52062 Aachen, Germany}
\email{xinfang.math@gmail.com}

\author{Peter Littelmann}
\address{Department Mathematik/Informatik, Universit\"at zu K\"oln, 50931, Cologne, Germany}
\email{peter.littelmann@math.uni-koeln.de}

\begin{document}
\maketitle
\begin{abstract}
We define and study the higher rank GKZ-fans of point configurations, serving as the set of discrete and homogeneous quasi-valuations on the homogeneous coordinate ring of the associated toric variety, where the rank one cases coincide with the usual GKZ-fans. Such a quasi-valuation is then used to degenerate the toric variety flatly to a reduced union of toric varieties, which encodes the polytopal subdivision arising from the point in the higher rank GKZ-fan.
\end{abstract}

\section{Introduction}

In the paper we fix $\mathbb{K}$ to be an algebraically closed field of characteristic zero.

Given an embedded toric variety $X\hookrightarrow \mathbb{P}(V)$ which is not necessarily normal, we want to attach to the variety a set of discrete and
homogeneous quasi-valuations defined on the homogeneous coordinate ring $\mathbb{K}[X]$ with value in $\mathbb{Q}^N\cup\{\infty\}$. 
For $N=1$, the classical GKZ-cones \cite{GKZ} serve here as a model to endow this set with a reasonable polyhedral structure, but for general $N>1$, this classical model needs a lift to higher rank maps and new polyhedral geometric structures.

\subsection{GKZ-cones and GKZ-fans}
The GKZ-cones and GKZ-fans, named after Gelfand, Kapranov and Zelevinsky \cite{GKZ}, originate in their work on $A$-hypergeometric systems. Their main motivation was to give a modern treatment to the classical resultants and discriminants using Newton polytopes and toric geometry.

Precisely, let $A\subseteq \mathbb{Z}^{n-1}$ be a finite set of points, called a point configuration. Their convex hull $P$ is a polytope in $\mathbb R^{n-1}$ with integral vertices. A point in a GKZ-cone is a coherent assignment of a real number to each element in $A$, called a height function, \emph{i.e.}, a function $A\to \mathbb{R}$. Geometrically, one looks at the graph of this height function, takes the convex hull of the graph, and projects down all ``upper faces'' of this convex hull to the polytope $P$. The images of these projected faces form a polytopal subdivision of $P$ into polytopes. All polytopal subdivisions of $P$ arising in this way from a height function are called \emph{regular} (in \cite{GKZ} they were called \emph{coherent}). A GKZ-cone gathers all height functions giving the same polytopal subdivision of $P$; in \emph{loc.cit.} it has been shown that all GKZ-cones can be glued together into a complete fan, called the \emph{secondary fan}.

The algebraic geometry behind a point configuration is the toric variety $X_A$, defined as the closure of the $(\mathbb{K}^*)^A$-orbit though all characters in $A$. Such a toric variety carries naturally an embedding into the projective space of the characters. Let $I_A$ be the defining ideal of $X_A$ in this embedding. Sturmfels \cite{StuToric} studied the Gr\"obner bases of the ideal $I_A$, and made use of these height functions to construct Gr\"obner degenerations of the embedded toric variety $X_A$ into  unions of toric schemes. More precisely, using the height function, he constructed a flat family $\mathcal{X}\to\mathbb{A}^1$ whose generic fiber is isomorphic to $X_A$ and the reduced structure of the special fiber encodes the combinatorics of the polytopal subdivision.

\subsection{Higher rank GKZ-fans}

In \cite{GKZ}, all possible such height functions which induce regular subdivisions are studied: they form a complete polyhedral fan in $\mathbb{R}^A$, the vector space of functions from $A$ to $\mathbb{R}$. Motivated by the higher rank quasi-valuations constructed in \cite{CCFL}, we intend to associate to each point in $A$ a vector in $\mathbb{R}^N$ for some $N\geq 1$ and not just a real number. This natural idea leads to the following questions:

\begin{enumerate}
\item[(1)] How to define analogues of the GKZ-cones in this setup? What is the discrete geometric structure of the analogues of the GKZ-cones?
\item[(2)] With the structure in (1), can these analogues of GKZ-cones be glued together into a complete fan?
\item[(3)] How to define a quasi-valuation from a point in the analogue of a GKZ-cone? Will similar results in \cite{CCFL} still hold for this quasi-valuation?
\end{enumerate}

The goal of the paper is to give affirmative answers to all three questions.

Fix two integers $n\geq 2$ and $N\geq 1$. Indexing elements in $A$ by $\chi_1,\ldots,\chi_r\in\mathbb{R}^{n-1}$, a map from $A$ to $\mathbb{R}^N$ can be represented by a matrix in $M_{N,r}(\mathbb{R})$ where the $j$-th column is the image of $\chi_j$ under the map. Instead of the pairing between $\mathbb{R}^r$ and itself via the Euclidean inner product $\mathbb{R}^r\times\mathbb{R}^r\to\mathbb{R}$ in the polyhedral geometry, what we have in our setup is only a bilinear map $\mu: M_{N,r}(\mathbb{R})\times\mathbb{R}^r\to\mathbb{R}^N$ given by matrix multiplication. To speak about polyhedral objects in polyhedral geometry, one uses essentially the standard order structure on $\mathbb{R}$, the target space of the inner product. When the image lies in a vector space $\mathbb{R}^N$ but $\mathbb{R}$, to define an analogue of a polyhedral object, a total order has to be fixed on $\mathbb{R}^N$. In this paper we fix on $\mathbb{R}^N$ the lexicographic order, and define similarly the closed half spaces for $\Phi\in M_{N,r}(\mathbb{R})$ and $\underline{w}\in\mathbb{R}^r$:
$$\mcH_{ {\Phi}}^\leq=\{\uv\in \mathbb R^r\mid \Phi\cdot\uv\le \mathbf 0\}\quad\text{and}\quad
H_{ {\underline{w}}}^\leq =\{\Psi\in M_{N,r}(\mathbb R)\mid \Psi\cdot\underline{w}\le \mathbf 0\}.$$

A $\mu$-polyhedral cone in $\mathbb{R}^r$ or in $M_{N,r}(\mathbb{R})$ is a finite intersection of the corresponding closed half spaces. Polyhedral cones in $\mathbb{R}^r$ are $\mu$-polyhedral. What makes the difference between these two notions is the lexicographic order: in general, the closed half spaces are no longer closed in the Euclidean topology on $\mathbb{R}^r$ and on $M_{N,r}(\mathbb{R})$. To get rid of this issue, we work with the order topology on $\mathbb{R}^r$ and the product order topology on $M_{N,r}(\mathbb{R})$. When $N\geq 2$, these topologies are strictly finer than the Euclidean topology and its products. With this topology, the real vector space $\mathbb{R}^N$ is no longer a topological vector space, and moreover, the famous Farkas lemma on the existence of separating hyperplanes fails: studying geometry of $\mu$-polyhedral cones needs new tools.

Parallel to the usual polyhedral geometry, in Section \ref{sec:polyhedral:cones:and:polars} we develop the discrete geometry of $\mu$-polyhedral cones such as polar duality (Theorem \ref{thm:polar:copolar}), faces and co-faces (Proposition \ref{prp:coface-cofan} and Proposition \ref{prp:face:coface}), fans and co-fans (Theorem \ref{thm:fan:cofan}) and their basic properties and behaviors under polar duality. The new tool here is the notion of co-faces of a convex cone, which is new up to our knowledge. It enables us to give short proofs for properties of faces in the $\mu$-polyhedral setup.

The higher rank GKZ-cones are defined and studied in Section \ref{sec:HiDiGKZ}. Let $P$ be the convex hull of $A$ and $\mathcal{Q}$ be a polytopal subdivision of $P$. The higher rank GKZ-cone $C(\mathcal{Q},N)$ is defined similar to the one in \cite{GKZ} by replacing functions to $\mathbb{R}$ by functions to $\mathbb{R}^N$, and the total order on $\mathbb{R}$ by the lexicographic order on $\mathbb{R}^N$ (Definition \ref{def:gkz:open:cone}, and Definition \ref{def:gkz:close:cone} for its closure). To show that the higher rank GKZ-cones are $\mu$-polyhedral, we introduce the GKZ-condition-cones (Definition \ref{def:condition:cone}) and show that the higher rank GKZ-cones are polar duals of them (Proposition \ref{prp:gkz:as:polar}). Taking co-faces of the GKZ-conditions-cones corresponds exactly to coarsening the polytopal subdivision (Proposition \ref{prp:refined:coface}). Moreover, using GKZ-condition-cones, we define the notion of regular subdivisions (Definition \ref{def:regular:condition:cone}): it is shown that the definition coincides with the usual one when $N=1$. The higher rank GKZ-cones associated to regular subdivisions can be glued together, they yield a complete $\mu$-polyhedral fan in $M_{N,r}(\mathbb{R})$ (Theorem \ref{thm:gkz:complete} and \ref{thm:gkz:correspondence}). The questions (1) and (2) above have been fully answered so far.


\subsection{GKZ-cones, quasi-valuations and degenerations} 
In Section \ref{sec:quasi-valuations}, starting from a point $\Psi$ in the higher rank GKZ-cone $C(\mathcal{Q},N)$, we define a quasi-valuation $\mathcal{V}_\Psi$ with values in $\mathbb{Q}^N\cup\{\infty\}$ on the homogeneous coordinate ring $\mathbb{K}[X_A]$ of the toric variety $X_A$ associated to the point configuration $A$, embedded into the projective space $\mathbb{P}(\mathbb{K}^A)$ using points in $A$. The quasi-valuation $\mathcal{V}_{\Psi}$ comes from the construction of the regular polytopal subdivision associated to $\Psi$: their basic properties are studied in Section \ref{sec:quasi-valuations}.

The semi-toric degeneration obtained from the quasi-valuations $\mathcal{V}_\Psi$ is studied in Section \ref{two:graded:algebras}. Since $\mathcal{V}_\Psi$ is radical, the associated graded algebra $\mathrm{gr}_{\mathcal{V}_\Psi}\mathbb{K}[X_A]$ is reduced, moreover, it is finitely generated. Geometrically, $\mathrm{Proj}(\mathrm{gr}_{\mathcal{V}_\Psi}\mathbb{K}[X_A])$ is reduced, and it is an irredundant union of finitely many equidimensional toric varieties, one for each polytope in the polytopal subdivision $\mathcal{Q}$. Such a variety depends only on $\mathcal{Q}$ (Theorem \ref{coro:geometric1}). As a consequence, for any $\Psi\in M_{N,r}(\mathbb{R})$, the quasi-valuation
$\mathcal{V}_\Psi$ has a finite Khovanskii basis. This answers the question (3) above.

\subsection{Outline of the paper}

The discrete geometry of $\mu$-polyhedral cones is studied in Section \ref{sec:polyhedral:cones:and:polars}, where the order topology used there is recalled in Appendix \ref{order:topology}. The higher rank GKZ-cones and fans are defined and studied in Section \ref{sec:HiDiGKZ}. In Section \ref{sec:quasi-valuations} the quasi-valuation $\mathcal{V}_\Psi$ is defined from a matrix $\Psi$ in a GKZ-cone. The semi-toric degeneration arising from $\mathcal{V}_\Psi$ is studied in Section \ref{two:graded:algebras}.

\subsection{Acknowledgements}
The work of R.C. is partially supported by PRIN 2022 S8SSW2 ``Algebraic and geometric aspects of Lie theory" (CUP I53D23002410006). The work of M.C.C. is partially founded by PRIN 2022 A7L229 ``ALgebraic and TOPological combinatorics" (CUP J53D23003660006). The work of X.F. is funded by the Deutsche Forschungsgemeinschaft: “Symbolic Tools in Mathematics and their Application” (TRR 195, project-ID 286237555). The work of P.L. is partially supported by DFG SFB/Transregio 191 ``Symplektische Strukturen in Geometrie, Algebra und Dynamik''.

\subsection{Disclaimer}
The views expressed are purely those of the authors and may not in any circumstances be regarded as stating an official position of the European Research Council Executive Agency and the European Commission.

\subsection{\bf Tool and computational resource disclosure} During the preparation of this manuscript no AI tool has been used.

\section{Polyhedral cones and polars}\label{sec:polyhedral:cones:and:polars}

The goal of this section is to generalize the fundamental notion of the polar dual $P(K)\subseteq (\mathbb{R}^r)^*$ of a convex cone $K\subseteq\mathbb{R}^r$ to the framework which is appropriate for the study of higher rank GKZ-cones. Roughly speaking, the functions on the cone $K$ will be replaced by maps to $\mathbb R^N$ for some integer $N\geq 1$. For this we introduce $\mu$--polyhedral cones in the space of matrices, and establish a polar correspondence between polyhedral cones in $\R^r$ and $\mu$--polyhedral cones in $M_{N,r}(\mathbb R)$. In this context the co-faces appear naturally as the dual object of faces of $\mu$--polyhedral cones.

\subsection{Convex cones, co-faces and co-fans}

Let $r\geq 1$ be an integer and let $\mathbb R^r$ be a Euclidean vector space endowed with the standard scalar product $(\cdot,\cdot)$, i.e. for $\uv,\uw\in\mathbb R^r$ we set $(\uv,\uw):=\uv^T\cdot\uw$. For $\uu\in \mathbb{R}^r$ let $h_{ {\uu}}^\leq$ be the closed half-space $h_{ {\uu}}^\leq:=\{\uv \in \mathbb{R}^r\mid (\uu,\uv)\le 0\}$.

\begin{definition}\label{s2_polyhedral}\rm
A \emph{convex cone} in $\mathbb{R}^r$ is a subset $\mcC\subseteq \mathbb{R}^r$ such that for any $\uw_1,\uw_2\in \mcC$ we have $\uw_1+\uw_2\in \mcC$  and for any $\uw\in \mcC$, $\lambda\in \mathbb R_{\ge 0}$, $\lambda \uw\in \mcC$. It is called \emph{polyhedral} if there exist $\uu_1,\ldots,\uu_k\in\mathbb{R}^r$ such that $\mcC=h_{ {\uu_1}}^\leq\cap \cdots\cap h_{ {\uu_k}}^\leq$; a finite intersection of closed half-spaces. Or, equivalently, if there exist $\uv_1,\ldots,\uv_s$ in $\mcC$ such that
\[
\mcC=\langle \uv_1,\ldots, \uv_s\rangle_{\R_ {\geq 0}}:=\{\sum_{i=1}^s \lambda_i \uv_i \mid \lambda_1,\ldots,\lambda_s\ge 0\}.
\]
\end{definition}

On the family of convex cones we define two operations. The \emph{intersection} $\mcC_1\cap \mcC_2$ of two convex cones is a convex cone, and if both cones are polyhedral, so is the intersection. The \emph{sum} $\mcC_1 + \mcC_2 = \{\uv_1 + \uv_2 \mid  \uv_1\in \mcC_1, \uv_2 \in \mcC_2\}$ of convex cones is again a convex cone, and if both cones are polyhedral, so is the sum.

The tangent cone $T_\mcC(\uu)$ of a polyhedral cone $\mcC=h_{ {\uu_1}}^\leq\cap \cdots\cap h_{ {\uu_k}}^\leq$ at $\uu\in\mcC$ can be defined as
\[
T_{\mcC}(\uu) := \{ \uv \in \mathbb{R}^r\,|\, (\uv,\uu_j)\leq 0 \textrm{ for all } j \textrm{ such that } (\uu, \uu_j) = 0 \}.
\]

\smallskip

In this section, we introduce the notion of a \emph{co-face} of a convex cone in $\R^r$ and the notion of a \emph{co-fan}. As we will see below, these are convenient notions to establish a duality between the face structures of a cone and its polar dual in our context.

\begin{definition}\label{s2_def:coface}
A \emph{co-face} of a convex cone $\mcC\subseteq \mathbb{R}^r$ is a subset of $\R^r$ of the form $\mcC+\R\cdot\uu$ for some $\uu\in\mcC$.
\end{definition}

\begin{example}
Let $\mcC=\langle \uv_1, \uv_2\rangle_{\R_{\geq0}}$ be the cone in $\R^2$ generated by two linearly independent vectors $\uv_1$ and $\uv_2$. Then it is easy to check that the cofaces of $\mcC$ are: (1). $\mcC$ itself; (2). $\mcC+\R\cdot\uv_1 = \R\cdot\uv_1 + \R_{\geq0}\cdot\uv_2$; (3). $\mcC+\R\cdot\uv_2 = \R_{\geq0}\cdot\uv_1 + \R\cdot\uv_2$; (4). $\R^2$.
\end{example}

\begin{definition}
A finite collection $\mfS$ of polyhedral cones in $\R^r$ is called a \emph{co-fan} if the following two conditions hold:
\begin{itemize}
\item[(1)] if $\mcC\in\mfS$ and $\mcF$ is a co-face of $\mcC$ then $\mcF\in\mfS$;
\item[(2)] if $\mcC_1,\mcC_2\in\mfS$ then $\mcC_1 + \mcC_2$ is a co-face of both $\mcC_1$ and $\mcC_2$ (hence an element of $\mfS$). 
\end{itemize}
\end{definition}

\begin{proposition}\label{prp:coface-cofan}
Let $\mcC\subseteq\R^r$ be a polyhedral cone.
\begin{itemize}
\item[(i)] A subset of $\mathbb{R}^r$ is a co-face of $\mcC$ if and only if it is the tangent cone of $\mcC$ at some point of $\mcC$.
\item[(ii)] The collection of all co-faces of $\mcC$ is a co-fan. In particular, $\mcC$ has only finitely many co-faces.
\end{itemize}
\end{proposition}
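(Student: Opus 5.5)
Write $\mcC=h_{\uu_1}^\leq\cap\cdots\cap h_{\uu_k}^\leq$ and, for $\uu\in\mcC$, let $J(\uu)=\{j\mid (\uu,\uu_j)=0\}$, so that $T_\mcC(\uu)=\bigcap_{j\in J(\uu)}h_{\uu_j}^\leq$. For (i) I will show directly that $\mcC+\R\uu=T_\mcC(\uu)$ for every $\uu\in\mcC$; both directions of (i) then follow, since every co-face and every tangent cone is indexed by a point of $\mcC$.

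\emph{Inclusion $\subseteq$.} We have $\mcC\subseteq T_\mcC(\uu)$ trivially. Also $\pm\uu\in T_\mcC(\uu)$, because $(\pm\uu,\uu_j)=0$ for $j\in J(\uu)$. Since $T_\mcC(\uu)$ is a convex cone, it follows that $\mcC+\R\uu\subseteq T_\mcC(\uu)$.

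\emph{Inclusion $\supseteq$.} Let $\uv\in T_\mcC(\uu)$. For $j\notin J(\uu)$ we have $(\uu,\uu_j)<0$, so for $t>0$ large enough $(\uv+t\uu,\uu_j)\le 0$ holds for all such $j$; there are finitely many, so one $t$ works. For $j\in J(\uu)$ we have $(\uv+t\uu,\uu_j)=(\uv,\uu_j)\le 0$. Hence $\uv+t\uu\in\mcC$, and $\uv=(\uv+t\uu)-t\uu\in\mcC+\R\uu$. This proves (i). The only place polyhedrality is used is the finiteness in the choice of $t$.

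For (ii), (i) shows that each co-face equals $\bigcap_{j\in J}h_{\uu_j}^\leq$ for some subset $J\subseteq\{1,\dots,k\}$. So there are at most $2^k$ co-faces, each of them polyhedral, and the collection is finite.

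\emph{Condition (1) of a co-fan.} Let $\mcF=\mcC+\R\uu$ be a co-face of $\mcC$ and let $\uw\in\mcF$. Write $\uw=\uc+s\uu$ with $\uc\in\mcC$ and $s\in\R$. Since $\R\uu$ absorbs multiples of $\uu$,
\[
\mcF+\R\uw=\mcC+\R\uu+\R\uc .
\]
Note that $\uu+\uc\in\mcC$ and that $\mcC+\R(\uu+\uc)\supseteq\R\uu+\R\uc$: for $a,b\in\R$, choose $\lambda\ge\max(|a|,|b|)$; then
\[
a\uu+b\uc=\lambda(\uu+\uc)-\bigl((\lambda-a)\uu+(\lambda-b)\uc\bigr),
\]
and the subtracted term lies in $\mcC$. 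Hence $\mcF+\R\uw=\mcC+\R(\uu+\uc)$, which is a co-face of $\mcC$.

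\emph{Condition (2) of a co-fan.} Let $\mcF_1=\mcC+\R\uu$ and $\mcF_2=\mcC+\R\uu'$. Then $\mcF_1+\mcF_2=\mcC+\R\uu+\R\uu'$. By the identity above this equals $\mcC+\R(\uu+\uu')$, so it is a co-face of $\mcC$. It is also $\mcF_1+\R\uu'$ with $\uu'\in\mcC\subseteq\mcF_1$, hence a co-face of $\mcF_1$, and symmetrically a co-face of $\mcF_2$.

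The only step requiring care is the identity $\mcC+\R\ua+\R\ub=\mcC+\R(\ua+\ub)$ for $\ua,\ub\in\mcC$; once it is established, both co-fan axioms follow from it.
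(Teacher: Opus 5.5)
Your route differs from the paper's and is correct in substance, but one displayed identity has the wrong sign. In the proof of condition (1) you write $a\uu+b\underline{c}=\lambda(\uu+\underline{c})-\bigl((\lambda-a)\uu+(\lambda-b)\underline{c}\bigr)$. This exhibits $a\uu+b\underline{c}$ as an element of $\R(\uu+\underline{c})-\mcC$, not of $\mcC+\R(\uu+\underline{c})$. Since $\mcC\neq-\mcC$ in general, ``the subtracted term lies in $\mcC$'' does not give the claim. Use instead $a\uu+b\underline{c}=(-\lambda)(\uu+\underline{c})+\bigl((\lambda+a)\uu+(\lambda+b)\underline{c}\bigr)$ with $\lambda\ge\max(|a|,|b|)$; equivalently, apply your identity to $-a,-b$ and negate. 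With that correction everything goes through, including part (i) and the $2^k$ finiteness count.

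The paper argues by ordinary polarity. It takes the normal cone $N_\mcC(\uu)=\mcC^\circ\cap\uu^\perp$ and uses $T_\mcC(\uu)=N_\mcC(\uu)^\circ$ to get $T_\mcC(\uu)=(\mcC^\circ\cap\uu^\perp)^\circ=\mcC+\R\uu$. It then obtains (ii) by transporting the normal fan of $\mcC$ through polarity, which swaps faces with co-faces and intersections with sums.

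You avoid duality entirely:
\begin{itemize}
\item for (i), the perturbation $\uv+t\uu\in\mcC$ for $t\gg 0$;
\item for (ii), the active index sets $J(\uu)$, which give the explicit bound $2^k$, together with the identity $\mcC+\R\underline{a}+\R\underline{b}=\mcC+\R(\underline{a}+\underline{b})$ for $\underline{a},\underline{b}\in\mcC$.
\end{itemize}

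What your version buys: it is self-contained, and it shows that the closure axioms (1) and (2) hold for the co-faces of an arbitrary convex cone. Polyhedrality is needed only for (i) and for finiteness.

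What the paper's version buys: it is shorter, given standard facts (bipolar theorem, closedness of sums of polyhedral cones, normal fan). It also makes visible that the co-faces of $\mcC$ are the polar image of its normal fan. That is the same mechanism behind the face/co-face correspondence in Proposition~\ref{prp:face:coface}.
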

\begin{proof}
Recall that, given $\uu\in\mcC$, the normal cone of $\mcC$ at $\uu$ is defined as
\[
N_\mcC(\uu) := \{ \uw \in \mathbb{R}^r\,|\, (\uw,\uv)\leq 0\textrm{ for all }\uv\in\mcC\textrm{ and }(\uv,\uu) = 0\}.
\]
For the purposes of this proof, we define the \emph{polar} of a convex cone $K\subseteq\R^r$ as $K^\circ := \{\ux \in \R^r\,|\, (\ux,\uy)\leq 0, \textrm{ for all }\uy\in K\}\subseteq\R^r$. It is well-known that (1) $T_\mcC(\uu) = \mcC^\circ\cap \uu^\perp$ and that (2) $N_\mcC(\uu) = T_\mcC(\uu)^\circ$.

It is then clear that $T_\mcC(\uu) = (\mcC^\circ\cap u^\perp)^\circ = \mcC + \R\cdot\uu$, which proves (i).

Moreover, the collection of all normal cones of $\mcC$ is a (finite) fan, called the normal fan of $\mcC$. Since polarity reverses inclusions and exchanges intersections and sums, we deduce that the collection of all co-faces of $\mcC$ is a co-fan, which proves (ii).
\end{proof}

\subsection{$\mu$-polyhedral cones}

We are going to generalize the notion of a closed half-space, which leads to a class of polyhedral-geometric objects, in the set of matrices $M_{N,r}(\mathbb{R})$, naturally showing up in the context of higher rank GKZ-cones.

To avoid confusion in the following, we reserve the notation ``$\,0\,$'' for the number $0$, and we use the notation ``$\,\mathbf 0\,$'' for a zero vector or a zero matrix.

Let $\mathbb R^N$ be endowed with the lexicographic order: that is, for $\bfa = (a_N,\ldots a_1)^T$ and $\bfb = (b_N,\ldots, b_1)^T$ in $\R^N$, we have $\bfa \leq \bfb$ if either $\bfa = \bfb$, or there exists $1\leq j\leq N$ such that $a_i = b_i$ for all $i > j$ and $a_j < b_j$. This order is compatible with addition and multiplication by non-negative scalars.

We have a natural  bilinear map for $M_{N,r}(\mathbb R)=\mathrm{Hom}_\mathbb R(\mathbb R^r,\mathbb R^N)$ and $\mathbb R^r$:
$$\mu: M_{N,r}(\mathbb R)  \times\mathbb R^r \rightarrow \mathbb R^N, \quad (\Psi,\uv)\mapsto \mu( \Psi,\uv):=\Psi\cdot\uv.$$
Here ``$\cdot$" denotes the matrix multiplication.

We consider the embedding of vector spaces $\mathbb{R}^r\to M_{N,r}(\mathbb{R})$ sending $\underline{v}\in\mathbb{R}^r$ to the matrix $\Phi_{\underline{v}}\in M_{N,r}(\mathbb{R})$ whose first row is $\underline{v}^T$ and all other entries are zero. For $\underline{w}\in\mathbb{R}^r$, 
\begin{equation}\label{Eq:Phi}
\Phi_{\underline{v}}\cdot \underline{w}=((\underline{v},\underline{w}),0,\ldots,0)^T\in\mathbb{R}^N.
\end{equation}

Guided by the usual definition of a closed half-space we define:

\begin{definition}\label{defn:half-space}\rm
The  \emph{generalized closed half-spaces} associated to $\Psi\in M_{N,r}(\mathbb R)$ and  $\uv\in \mathbb R^r$ are defined by:
$$
\mcH_{ {\Psi}}^\leq=\{\uv\in \mathbb R^r\mid \Psi\cdot\uv\le \mathbf 0\}\quad\textrm{and}\quad
H_{ {\uv}}^\leq =\{\Psi\in M_{N,r}(\mathbb R)\mid \Psi\cdot\uv\le \mathbf 0\}.
$$
In the same way we define the \emph{generalized open half-spaces} $\mcH_{ {\Psi}}^<=\{\uv\in \mathbb R^r\mid \Psi\cdot\uv < \mathbf 0\}$, $H_{{\uv}}^< =\{\Psi\in M_{N,r}(\mathbb R)\mid \Psi\cdot\uv < \mathbf 0\}$, and the subspace $H_{ {\uv}}^0 =\{\Psi\in M_{N,r}(\mathbb R)\mid \Psi\cdot\uv = \mathbf 0\}$.
\end{definition}

We note that all above subsets are convex cones in $M_{N,r}(\mathbb{R})$. Most of the time we \emph{omit} the term  \emph{generalized}, it will be clear from the context which kind of half-spaces are considered.

\begin{example}\label{classical}
For $N=1$ we get the usual closed half-spaces in $\mathbb R^r$. In particular, for $N=1$, the closed half-spaces are  polyhedral cones and they are closed in the Euclidean topology.
\end{example} 

\begin{example}\label{identity1}
Consider the case $r=N=2$. If $\Psi=\mathrm{id} \in M_{2,2}(\mathbb R)$ is the identity matrix, then $\mcH_{\mathrm{id}}^\leq\subseteq \mathbb R^2$ is the set of vectors $\uv=(v_2,v_1)^T$ such that either $v_2<0$, or $v_2=0$ and $v_1\le 0$. In particular, the cone  $\mcH_{\mathrm{id}}^\leq$ is neither a polyhedral cone, nor a closed subset in the Euclidean topology. The same arguments show that for $\underline{v}=(1,0)^T$,  $H_{\underline{v}}^\leq\subseteq  M_{2,2}(\mathbb R)$ is neither a polyhedral cone, nor a closed subset in the Euclidean topology.
\end{example} 

To justify the name \emph{closed} half-space for $H_{ {\uv}}^\leq$ not only by the analogy with the definition in the case $N=1$, we endow $\mathbb R^{N}$ with the order topology associated to the lexicographic order on $\mathbb R^N$ (see Appendix~\ref{order:topology} for details), and we endow $\mathrm{Hom}_\mathbb R(\mathbb R^r,\mathbb R^N)=M_{N,r}(\mathbb R)\simeq \R^N\times\cdots\times\R^N$ with the product of the order topologies. 
We call this topology the \emph{product order topology}. 

Adding vectors and multiplying a vector by a scalar are continuous operations  in $\R^N$ in the order topology. The following lemma in the matrix case can be found in Appendix~\ref{order:topology}, Lemma~\ref{lemma:cone:closed:open}. The statements on those in $\mathbb{R}^r$ can be proved similarly.


\begin{lemma}\label{Lem:Closed}
For $\underline v\in \mathbb R^r$ and $\Psi\in M_{N,r}(\mathbb{R})$, $H^0_{\uv}, H^\leq_{\uv}\subseteq M_{N,r}(\mathbb R)$ (resp. $\mathcal{H}^0_{\Psi}, \mathcal{H}^\leq_{\Psi}\subseteq \mathbb{R}^r$) are closed in the product order topology (resp. order topology);
and $H^<_{\uv}\subseteq M_{N,r}(\mathbb R)$ (resp. $\mathcal{H}^<_{\Psi}\subseteq \mathbb{R}^r$) is open in the product order topology (resp. order topology).
\end{lemma}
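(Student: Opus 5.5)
The plan is to reduce everything to two facts about $\R^N$ with the lexicographic order topology, and then pull back along continuous maps.

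\emph{Facts on $\R^N$.} In any linearly ordered set with the order topology, the ray $\{\bfa\mid \bfa<\mathbf 0\}$ is open by definition of the order topology (it is an open subbasic set). Its complement, the ray $\{\bfa\mid\bfa\ge\mathbf 0\}$, is therefore closed. By the symmetric argument, $\{\bfa\mid\bfa>\mathbf 0\}$ is open, so $\{\bfa\mid\bfa\le \mathbf 0\}$ is closed. The singleton $\{\mathbf 0\}$ equals $\{\bfa\le\mathbf 0\}\cap\{\bfa\ge\mathbf 0\}$, hence it is closed as well.

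\emph{Continuity of evaluation.} Fix $\uv\in\R^r$. The map $M_{N,r}(\R)\to\R^N$, $\Psi\mapsto\Psi\cdot\uv=\sum_j v_j\Psi_j$, where $\Psi_j$ is the $j$-th column, is continuous from the product order topology to the order topology. The projections $\Psi\mapsto\Psi_j$ are continuous by definition of the product topology. Scalar multiplication $\bfa\mapsto v_j\bfa$ is continuous, as noted in the paper. Finite sums are continuous because addition $\R^N\times\R^N\to\R^N$ is continuous, also noted in the paper. The only nontrivial input is that continuity of addition is meant jointly in the product topology. I would quickly verify this: given $\bfa+\bfb<\bfc$, perturbing $\bfa$ and $\bfb$ inside small lexicographic intervals keeps the sum below $\bfc$. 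Then $H^\le_{\uv}$, $H^0_{\uv}$ and $H^<_{\uv}$ are the preimages of the closed ray, the closed point, and the open ray respectively, which gives the matrix statements.

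\emph{The $\R^r$ case.} Here I interpret the order topology on $\R^r$ as the lexicographic one. The map $\uv\mapsto\Psi\cdot\uv$ is linear from $\R^r$ to $\R^N$, so the same pullback argument applies once this map is shown to be continuous. This is the step I expect to be the main obstacle, because a linear map between lexicographically ordered spaces need not be order-continuous in an obvious way. My plan is to use that every linear map $\R^r\to\R$ is continuous from the lexicographic topology. The lexicographic topology on $\R^r$ is finer than the Euclidean one: it coincides with the product of the discrete topology on the leading coordinates and the Euclidean topology on the last coordinate. It therefore suffices to show that, for each point $\uv_0$, there is a lexicographic neighborhood on which $\Psi\cdot\uv$ stays in any prescribed lexicographic interval around $\Psi\cdot\uv_0$. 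Take the lexicographic neighborhood of $\uv_0$ in which all coordinates except the last are fixed and the last varies by $\epsilon$. On it, $\Psi\cdot\uv=\Psi\cdot\uv_0+t\,\Psi e$, where $e$ is the last standard basis vector and $|t|<\epsilon$. The segment $\{\Psi\cdot\uv_0+t\,\Psi e : |t|<\epsilon\}$ is contained in any open order interval around $\Psi\cdot\uv_0$ for small enough $\epsilon$. Indeed, an open interval $(\bfa,\bfb)\ni\bfc$ contains $\bfc+t\mathbf d$ for $|t|$ small, for any fixed $\mathbf d$: compare the first coordinate where $\bfc$ differs from $\bfa$, respectively from $\bfb$. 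This gives continuity. The closedness and openness statements then follow exactly as in the matrix case.
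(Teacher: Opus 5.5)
Your argument for the matrix half is correct and follows the paper's route. You show $\Psi\mapsto\Psi\cdot\uv$ is continuous for the product order topology (Lemma~\ref{lem:continuous}). You then pull back the closed ray $\{\bfa\le\mathbf 0\}$, the point $\mathbf 0$ and the open ray $\{\bfa<\mathbf 0\}$ (Lemma~\ref{lemma:cone:closed:open}). Your check of joint continuity of addition works because a small order interval around a point only moves its bottom coordinate.

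The $\R^r$ half fails at the step ``an open interval $(\bfa,\bfb)\ni\bfc$ contains $\bfc+t\mathbf d$ for $|t|$ small, for any fixed $\mathbf d$''. Take $N=2$, $\bfa=(0,-1)^T$, $\bfb=(0,1)^T$, $\bfc=\mathbf 0$, $\mathbf d=(1,0)^T$. Then $\bfc+t\mathbf d=(t,0)^T\notin(\bfa,\bfb)$ for every $t\neq 0$. Comparing at the first coordinate where $\bfc$ and $\bfa$ differ ignores that $t\mathbf d$ also moves the more significant coordinates, where they agree. This is exactly why $\R^N$ with the order topology is not a topological vector space. Your own reduction shows that $\uv\mapsto\Psi\cdot\uv$ is continuous from the lexicographic topology on $\R^r$ if and only if the last column of $\Psi$ vanishes except possibly in its bottom entry.

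The step cannot be repaired, because under your reading the conclusion is false. Take $r=N=2$ and $\Psi=\begin{pmatrix}0&1\\1&0\end{pmatrix}$, so $\Psi\cdot(v_2,v_1)^T=(v_1,v_2)^T$ and
$\mcH^\le_\Psi=\{\uv\mid v_1<0\}\cup\{\uv\mid v_1=0,\ v_2\le 0\}$.
\begin{itemize}
\item The points $(1,-1/k)^T\in\mcH^\le_\Psi$ converge lexicographically to $(1,0)^T\notin\mcH^\le_\Psi$, so $\mcH^\le_\Psi$ is not closed.
\item The point $(-1,0)^T$ lies in $\mcH^<_\Psi$, but every basic neighbourhood $\{(-1,x)^T\mid |x|<\delta\}$ contains $(-1,\delta/2)^T\notin\mcH^<_\Psi$, so $\mcH^<_\Psi$ is not open.
\end{itemize}
With the Euclidean topology on $\R^r$, Example~\ref{identity1} is already a counterexample. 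The paper itself only asserts that the $\R^r$ statements ``can be proved similarly'', so it gives no argument here either; that half has to be restricted. Two parts do hold. The matrix half holds, and it is all that is used later (Corollary~\ref{cor:finite:intersection}). The claim for $\mcH^0_\Psi=\ker\Psi$ also holds: it is a linear subspace, hence closed, since the lexicographic topology refines the Euclidean one.
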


We generalize the notion of a polyhedral cone using the new class of closed half-spaces:

\begin{definition}\label{def:polyhedral:cone:vectors}\rm
A $\mu$-\emph{polyhedral cone} $C$ in $M_{N,r}(\mathbb R)$ is defined as a finite intersection of closed half-spaces ${C}=H_{ {\uv_1}}^\leq\cap\ldots\cap H_{ {\uv_s}}^\leq $ for some $\uv_1,\ldots, \uv_s\in \mathbb R^r$.
\end{definition}

Since closed half-spaces are cones, being their intersection, $\mu$-polyhedral cones in $M_{N,r}(\mathbb{R})$ are cones as well. 

It follows immediately from the definition and Lemma \ref{Lem:Closed}:

\begin{corollary}\label{cor:finite:intersection}
A finite intersection of $\mu$-polyhedral cones is a $\mu$-polyhedral cone. Such cones are closed in the product order topology of $M_{N,r}(\mathbb{R})$.
\end{corollary}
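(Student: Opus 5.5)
The corollary has two claims, and I will handle them separately.

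\emph{Intersections.} Take two $\mu$-polyhedral cones
\[
C=H_{\uv_1}^\leq\cap\cdots\cap H_{\uv_s}^\leq,
\qquad
C'=H_{\uw_1}^\leq\cap\cdots\cap H_{\uw_t}^\leq .
\]
Their intersection is
\[
C\cap C'=H_{\uv_1}^\leq\cap\cdots\cap H_{\uv_s}^\leq\cap H_{\uw_1}^\leq\cap\cdots\cap H_{\uw_t}^\leq ,
\]
which is again a finite intersection of closed half-spaces in the sense of Definition~\ref{defn:half-space}. So it is $\mu$-polyhedral by Definition~\ref{def:polyhedral:cone:vectors}. Induction on the number of cones gives the statement for any finite intersection.

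\emph{Closedness.} By Lemma~\ref{Lem:Closed}, each $H_{\uv}^\leq\subseteq M_{N,r}(\R)$ is closed in the product order topology. In any topological space an arbitrary intersection of closed sets is closed. Hence every $\mu$-polyhedral cone, being a finite intersection of such half-spaces, is closed in the product order topology.

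The only step with real content is the closedness of a single half-space $H_{\uv}^\leq$, and that is already supplied by Lemma~\ref{Lem:Closed}. Should one want to check it directly, the argument runs as follows. The map $\Psi\mapsto\Psi\cdot\uv=\sum_j v_j\Psi_{\cdot j}$ is continuous from the product order topology to the order topology on $\R^N$, because addition and scalar multiplication are continuous there. Moreover, $\{\bfa\in\R^N\mid \bfa\le\mathbf 0\}$ is the complement of the open ray $\{\bfa>\mathbf 0\}$, so it is closed in the order topology. Taking the preimage under a continuous map then gives closedness of $H_{\uv}^\leq$. I therefore expect no genuine obstacle.
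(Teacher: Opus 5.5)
Your proof is correct and is the paper's argument: the paper also gets both claims directly from Definition~\ref{def:polyhedral:cone:vectors}, by concatenating the lists of defining vectors, and from Lemma~\ref{Lem:Closed}. Your optional direct check that $H_{\uv}^\leq$ is closed, via continuity of $\Psi\mapsto\Psi\cdot\uv$ and closedness of $\{\bfa\le\mathbf 0\}$, is the same reasoning the appendix records in Lemmas~\ref{lem:continuous} and~\ref{lemma:cone:closed:open}.
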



\subsection{Polar correspondence and $\mu$-polyhedral cones}

The polar dual of a polyhedral cone is a crucial notion in the study of its geometry and combinatorics. It plays a similar role in the study of $\mu$-polyhedral cones. In the $\mu$-polyhedral setup, the polar cone does not live in the same space as the cone itself, and the co-polar cone will be introduced to emphasize this difference.

\begin{definition}\label{def:polar}\rm
Let $\mcC\subseteq \mathbb R^r$ be a convex cone. We define its \emph{$N$-polar cone} as 
\[
P(\mcC)=\{\Psi\in M_{N,r}(\mathbb R)\mid \Psi\cdot \uv\le \mathbf 0 \text{ for all }\uv\in \mcC\}\,\subseteq M_{N,r}(\mathbb R).
\]
Similarly, the \emph{co-polar cone} of a convex cone $C\subseteq M_{N,r}(\mathbb R)$ is defined as
\[
\mathcal P(C)=\{\uv\in \mathbb{R}^r\mid \Psi\cdot \uv\le \mathbf 0\text{ for all } \Psi\in C\}\,\subseteq \mathbb{R}^r.
\]
\end{definition}

If $N$ is clear from the context, we will simply call $P(\mcC)$ the polar cone of $\mcC$.

\begin{lemma}\label{lem:polyhedral:cone2}
The $N$-polar cone $P(\mcC)$ of the polyhedral cone 
$ \mcC=\langle \uv_1,\ldots, \uv_k\rangle_{\R_ {\geq 0}}\subseteq \mathbb R^r$ is the $\mu$-polyhedral cone 
$\bigcap_{i=1}^k H_{\uv_i}^\leq\subseteq M_{N,r}(\mathbb R)$.
\end{lemma}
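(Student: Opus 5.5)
The plan is to prove the two inclusions between $P(\mcC)$ and $\bigcap_{i=1}^k H_{\uv_i}^\leq$ directly from the definitions. The only structure needed is that the lexicographic order on $\R^N$ is compatible with addition and with multiplication by non-negative scalars, as noted when the order was introduced.

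First, the inclusion $P(\mcC)\subseteq \bigcap_{i=1}^k H_{\uv_i}^\leq$. Each generator $\uv_i$ lies in $\mcC$, since we may take $\lambda_i=1$ and all other coefficients $0$. So if $\Psi\in P(\mcC)$, the definition of the $N$-polar cone (Definition~\ref{def:polar}) gives $\Psi\cdot\uv_i\le\mathbf 0$ for every $i$. That is, $\Psi\in H_{\uv_i}^\leq$ for all $i$.

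Second, the reverse inclusion. Let $\Psi$ satisfy $\Psi\cdot\uv_i\le\mathbf 0$ for all $i$, and let $\uv=\sum_i\lambda_i\uv_i$ with $\lambda_i\ge 0$ be an arbitrary element of $\mcC$. By linearity of matrix multiplication, $\Psi\cdot\uv=\sum_i\lambda_i(\Psi\cdot\uv_i)$. We then need two facts about the lexicographic order:
\begin{enumerate}
\item[(a)] if $\bfa\le\mathbf 0$ and $\lambda\ge 0$, then $\lambda\bfa\le\mathbf 0$;
\item[(b)] if $\bfa,\bfb\le\mathbf 0$, then $\bfa+\bfb\le\mathbf 0$.
\end{enumerate}

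For (a): if $\lambda=0$ or $\bfa=\mathbf 0$ this is trivial. Otherwise the first nonzero coordinate of $\bfa$, read from the top index $N$ down, is negative. Multiplying by $\lambda>0$ keeps the same leading index and the same sign.

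For (b): assume $\bfa,\bfb\neq\mathbf 0$ and let $j_a, j_b$ be their leading indices, with say $j_a\ge j_b$. All coordinates of $\bfa+\bfb$ above $j_a$ vanish. The $j_a$-th coordinate is either $a_{j_a}<0$ (if $j_a>j_b$) or $a_{j_a}+b_{j_a}<0$ (if $j_a=j_b$). Either way $\bfa+\bfb<\mathbf 0$.

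By induction on the number of summands, $\Psi\cdot\uv\le\mathbf 0$, so $\Psi\in P(\mcC)$. Since $\bigcap_{i=1}^k H_{\uv_i}^\leq$ is a finite intersection of closed half-spaces, it is $\mu$-polyhedral by Definition~\ref{def:polyhedral:cone:vectors}.

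I do not expect a genuine obstacle. The one point needing care is the compatibility of the lexicographic order with non-negative linear combinations, that is, facts (a) and (b). The paper asserts this already, and the leading-coordinate argument above makes it explicit if required.
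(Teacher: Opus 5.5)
Your proposal is correct and follows essentially the same approach as the paper: the paper's proof simply observes that $\Psi\cdot\uv\le\mathbf 0$ for all $\uv\in\mcC$ if and only if $\Psi\cdot\uv_i\le\mathbf 0$ for all generators, which is exactly your two inclusions. Your explicit leading-coordinate verification of facts (a) and (b) is a correct spelling-out of the compatibility of the lexicographic order with non-negative linear combinations, which the paper takes for granted.
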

\begin{proof}
For a $\Psi\in M_{N,r}(\mathbb R)$ we have $\Psi\cdot\uv\leq \mathbf 0$ for all $\uv\in\mcC$ if and only if 
 $\Psi\cdot\uv_i\leq \mathbf 0$  for $i=1,\ldots,k$.  The  claim  follows immediately by the definitions.
\end{proof}

%

\begin{lemma}\label{lem:copolar:cone2}
The co-polar  cone $\mcP(C)$ of the $\mu$-polyhedral cone $C=\bigcap_{i=1}^k H_{\uv_i}^\leq$  is the polyhedral cone 
$\mcP(C)=\langle \uv_1,\ldots,\uv_k\rangle_{\R_{\geq 0}}\subseteq\mathbb R^r$.
\end{lemma}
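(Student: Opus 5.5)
We prove the two inclusions between $\mcP(C)$ and $\mcC:=\langle \uv_1,\ldots,\uv_k\rangle_{\R_{\geq 0}}$ separately. The inclusion $\mcC\subseteq\mcP(C)$ is formal. For $\Psi\in C$ we have $\Psi\cdot\uv_i\le\mathbf 0$ for every $i$. If $\uv=\sum_i\lambda_i\uv_i$ with $\lambda_i\ge 0$, then $\Psi\cdot\uv=\sum_i\lambda_i\,\Psi\cdot\uv_i\le\mathbf 0$, because the lexicographic order is compatible with addition and with multiplication by non-negative scalars. Hence $\uv\in\mcP(C)$.

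For the reverse inclusion $\mcP(C)\subseteq\mcC$, we reduce to the classical Farkas lemma using the embedding $\uw\mapsto\Phi_{\uw}$ of \eqref{Eq:Phi}. Let $\uv\in\mcP(C)$ and assume $\uv\notin\mcC$. Since $\mcC$ is a polyhedral cone in $\R^r$, the classical separation theorem (Farkas) gives $\uw\in\R^r$ such that $(\uw,\uv_i)\le 0$ for all $i$ and $(\uw,\uv)>0$. We then consider $\Phi_{\uw}\in M_{N,r}(\R)$.

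By \eqref{Eq:Phi}, $\Phi_{\uw}\cdot\uv_i=((\uw,\uv_i),0,\ldots,0)^T$. This vector is lexicographically $\le\mathbf 0$ because its only possibly nonzero coordinate is the leading one, and that coordinate is $\le 0$. Hence $\Phi_{\uw}\in H^\leq_{\uv_i}$ for every $i$, that is, $\Phi_{\uw}\in C$. On the other hand, $\Phi_{\uw}\cdot\uv=((\uw,\uv),0,\ldots,0)^T>\mathbf 0$. This contradicts $\uv\in\mcP(C)$, so $\mcP(C)\subseteq\mcC$.

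The one delicate point is making sure that the first row of $\Phi_{\uw}$ sits in the leading coordinate of the lexicographic order. The order is written as $\bfa=(a_N,\ldots,a_1)^T$, with $a_N$ the top entry and the most significant one, so the first row, which occupies the top entry, is indeed leading. Even if that were not the case, the argument would go through unchanged, since all the other coordinates of $\Phi_{\uw}\cdot\uv_i$ and $\Phi_{\uw}\cdot\uv$ are zero and the sign of the single nonzero coordinate decides the comparison with $\mathbf 0$. Beyond this check, no real obstacle is expected: the classical Farkas lemma does all the work once the problem is transported through $\Phi$.
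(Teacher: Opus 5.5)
Your proof is correct and follows the paper's argument: the easy inclusion via compatibility of the lexicographic order with sums and non-negative scalars, then separation of $\uv\notin\mcC$ by some $\uw$ and the observation that $\Phi_{\uw}\in C$ while $\Phi_{\uw}\cdot\uv>\mathbf 0$. Your check that the first row of $\Phi_{\uw}$ lands in the leading lexicographic coordinate is accurate. As you note, that check is not even needed, since only one coordinate of the product is nonzero.
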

\begin{proof}
For any $\Psi\in C$ and any $i=1,\ldots,k$ we have $\Psi\cdot\uv_i \leq\mathbf 0$;
hence $\uv_1,\ldots,\uv_k\in\mcP(C)$ and we find $\mcC:=\langle \uv_1,\ldots, \uv_k\rangle_{\R_ { \geq 0}}\subseteq\mcP(C)$.

For the other inclusion, suppose that there exists an element $\uw\in\mcP(C)$ such that $\uw\not\in\mcC$. By the hyperplane separation theorem for convex cones, there exists an element $\uu\in \mathbb R^r$ such that $(\uu,\uw)>0$ and $(\uu,\uv)\leq 0$ for all $\uv\in\mcC$.

Recall that for any vector $\uv\in \mathbb R^r$, we have $\Phi_{\uu}\cdot\uv = ((\uu,\uv),0,\ldots,0)^T$. 
In particular $\Phi_{\uu}\cdot \uv_i\leq\mathbf 0$ for all $i=1\,\ldots,k$, which implies $\Phi_{\uu}\in C$. But $(\uu,\uw)>0$ implies $\Phi_{\uu}\cdot \uw>\mathbf 0$ and hence $\uw\notin\mcP(C)$, which is a contradiction.
\end{proof}

The following is essential for the study of higher rank GKZ-cones later:

\begin{theorem}\label{thm:polar:copolar}
The map, which associates to a polyhedral cone  $\mcC\subseteq \mathbb R^r$ its $N$-polar cone $P(\mcC)$, and the map, which associates to a $\mu$-polyhedral cone $C\subseteq M_{N,r}(\mathbb R)$ 
its co-polar cone $\mcP(C)$, are inverse to each other. 
They induce an inclusion-reversing bijection between the set of polyhedral cones in $\mathbb R^r$ and
the set of $\mu$-polyhedral cones in $M_{N,r}(\mathbb R)$. 

With respect to this bijection, the sum of polyhedral cones in $\mathbb R^r$ corresponds to the intersection of $\mu$-polyhedral cones in $M_{N,r}(\mathbb R)$.
\end{theorem}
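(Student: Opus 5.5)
The plan is to combine Lemma~\ref{lem:polyhedral:cone2} and Lemma~\ref{lem:copolar:cone2}, which already give almost everything. First, both maps land in the right classes. If $\mcC=\langle \uv_1,\ldots,\uv_k\rangle_{\R_{\geq 0}}$ is polyhedral (using the equivalent generator description of Definition~\ref{s2_polyhedral}), then Lemma~\ref{lem:polyhedral:cone2} shows that $P(\mcC)=\bigcap_i H^\leq_{\uv_i}$ is $\mu$-polyhedral. Conversely, if $C=\bigcap_i H^\leq_{\uv_i}$ is $\mu$-polyhedral, then Lemma~\ref{lem:copolar:cone2} shows that $\mcP(C)=\langle \uv_1,\ldots,\uv_k\rangle_{\R_{\geq 0}}$ is polyhedral.

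Next, the two maps are mutually inverse. For polyhedral $\mcC$ generated by $\uv_1,\ldots,\uv_k$, applying the first lemma and then the second gives
\[
\mcP(P(\mcC))=\mcP\Bigl(\bigcap_i H^\leq_{\uv_i}\Bigr)=\langle \uv_1,\ldots,\uv_k\rangle_{\R_{\geq 0}}=\mcC .
\]
For a $\mu$-polyhedral $C=\bigcap_i H^\leq_{\uv_i}$, applying the lemmas in the other order gives
\[
P(\mcP(C))=P(\langle \uv_1,\ldots,\uv_k\rangle_{\R_{\geq 0}})=\bigcap_i H^\leq_{\uv_i}=C .
\]
The only point needing care is that these identities hold for any presentation of the cones, and the lemmas are stated for arbitrary generators or defining vectors, so this is automatic. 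The genuinely nontrivial input is the separation argument inside Lemma~\ref{lem:copolar:cone2}, which is already available. Hence the two maps are inverse bijections.

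Inclusion reversal is immediate from the definitions. If $\mcC_1\subseteq\mcC_2$, then any $\Psi$ with $\Psi\cdot\uv\le\mathbf 0$ for all $\uv\in\mcC_2$ satisfies the same for all $\uv\in\mcC_1$, so $P(\mcC_2)\subseteq P(\mcC_1)$. The same argument shows $C_1\subseteq C_2$ implies $\mcP(C_2)\subseteq\mcP(C_1)$. Since the maps are inverse, both implications are in fact equivalences.

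Finally, for sums let $\mcC_1=\langle \uv_1,\ldots,\uv_k\rangle_{\R_{\geq 0}}$ and $\mcC_2=\langle \uw_1,\ldots,\uw_l\rangle_{\R_{\geq 0}}$. Then $\mcC_1+\mcC_2$ is the polyhedral cone generated by all the $\uv_i$ and $\uw_j$. By Lemma~\ref{lem:polyhedral:cone2},
\[
P(\mcC_1+\mcC_2)=\bigcap_i H^\leq_{\uv_i}\cap\bigcap_j H^\leq_{\uw_j}=P(\mcC_1)\cap P(\mcC_2).
\]
Dually, applying $\mcP$ to this identity and using bijectivity gives $\mcP(C_1\cap C_2)=\mcP(C_1)+\mcP(C_2)$. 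This uses that an intersection of $\mu$-polyhedral cones is again $\mu$-polyhedral, by Corollary~\ref{cor:finite:intersection}. I expect no real obstacle here: all the hard work sits in the hyperplane separation step of Lemma~\ref{lem:copolar:cone2}.
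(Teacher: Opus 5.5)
Your proposal is correct and follows the paper's proof: both rest entirely on Lemma~\ref{lem:polyhedral:cone2} and Lemma~\ref{lem:copolar:cone2}, with the separation argument in the latter doing the real work. The only difference is cosmetic: you verify the sum/intersection correspondence on the polyhedral side, $P(\mcC_1+\mcC_2)=P(\mcC_1)\cap P(\mcC_2)$, and transport it through the bijection, whereas the paper computes $\mcP(C_1\cap C_2)=\mcP(C_1)+\mcP(C_2)$ directly from Lemma~\ref{lem:copolar:cone2}.
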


\begin{proof}
By Lemma~\ref{lem:polyhedral:cone2} and Lemma~\ref{lem:copolar:cone2}, the maps are bijections which are inverse to each other. They clearly reverse inclusions. Now let $C_1 = \bigcap_{i=1}^k H_{\uv_i}^\leq$ and  $C_2 = \bigcap_{i=1}^s H_{\uw_i,}^\leq$ be $\mu$-polyhedral cones in $M_{N,r}(\mathbb{R})$ where $\uv_1,\ldots,\uv_k,\uw_1,\ldots,\uw_s\in \mathbb R^r$. By Lemma \ref{lem:copolar:cone2} we get for the associated co-polar cones:
$$\mcC_1 := \mcP(C_1) = \langle \uv_1,\ldots,\uv_k\rangle_{\R_{\geq 0}},\quad\mcC_2 := \mcP(C_2) = \langle \uw_1,\ldots,\uw_s\rangle_{\R_{\geq 0}}.$$
For the intersection of the cones we get
$$\mcP(C_1\cap C_2) = \langle \uv_1,\ldots,\uv_k,\uw_1,\ldots,\uw_s\rangle_{\R_{\geq 0}} = \mcC_1 + \mcC_2$$
because $C_1\cap C_2 = \bigcap_{i=1}^k H_{\uv_i}^\leq \cap \bigcap_{i=1}^s H_{\uw_i}^\leq$. This shows that the sum of polyhedral cones in $\mathbb R^r$ corresponds to the intersection of $\mu$-polyhedral cones in $M_{N,r}(\mathbb R)$.
\end{proof}

We call the  bijection in the previous theorem the \emph{polar correspondence} between polyhedral
cones in $\mathbb R^r$ and  $\mu$-polyhedral cones in $M_{N,r}(\mathbb R)$.


\subsection{A dimension formula}

To a polyhedral cone $\mcC=h_{ {\uu_1}}^\leq\cap \cdots\cap h_{ {\uu_k}}^\leq \subseteq\R^r$ we associate  
three subspaces:
\begin{enumerate}
\item $L_\mcC := \mcC\cap(-\mcC)$ is the largest subspace of $\R^r$ contained in $\mcC$;
\item $U_\mcC\subseteq \R^r$ is the subspace spanned by $\uu_1,\ldots,\uu_k$;
\item $W_\mcC:=\{\Psi\in M_{N,r}(\R)\mid \Psi\cdot\uv = \mathbf 0 \text{ for all }\uv\in L_\mcC\}$.
\end{enumerate} 
It follows from definition that $P(\mcC)\subseteq  W_\mcC$. 

We write a row of $\Psi\in M_{N,r}(\R)$ as the transpose
$\uw^T$ of a vector $\uw\in \R^r$. The next lemma follows immediately from the definitions of the subspaces.

\begin{lemma}\label{lem:row:lemma}
\begin{itemize}
\item[{\rm (i)}] $L_\mcC$ is the orthogonal complement of $U_\mcC$ in  $\R^r$. 
\item[{\rm (ii)}]  $W_\mcC$ is the subspace of matrices such that the rows lie in $U_\mcC$,  i.e.
if $\uw^T_N,\ldots,\uw^T_1$ are the rows of $\Psi$, then $\Psi\in  W_\mcC$ if and only if $\uw_N,\ldots,\uw_1\in U_\mcC$.
\end{itemize}
\end{lemma}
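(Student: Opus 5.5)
Part (i) is the standard duality between the lineality space of a polyhedral cone and the span of its defining normals. First I show $U_\mcC^\perp\subseteq L_\mcC$. If $(\uu_j,\uv)=0$ for all $j$, then $\uv$ and $-\uv$ both lie in every $h_{\uu_j}^\leq$, so $\uv\in\mcC\cap(-\mcC)=L_\mcC$. For the converse, take $\uv\in L_\mcC$. Then $(\uu_j,\uv)\le 0$ and $(\uu_j,-\uv)\le 0$, so $(\uu_j,\uv)=0$ for every $j$. Hence $\uv$ is orthogonal to every generator of $U_\mcC$, i.e.\ $\uv\in U_\mcC^\perp$. This gives $L_\mcC=U_\mcC^\perp$. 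Note that this argument uses only the given half-space description, so it does not depend on which presentation $\uu_1,\ldots,\uu_k$ is chosen. The identity $L_\mcC=U_\mcC^\perp$ also shows a posteriori that $U_\mcC=L_\mcC^\perp$ is intrinsic to $\mcC$.

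For (ii), write $\Psi$ with rows $\uw_N^T,\ldots,\uw_1^T$. For any $\uv\in\R^r$ we have
\[
\Psi\cdot\uv=\bigl((\uw_N,\uv),\ldots,(\uw_1,\uv)\bigr)^T .
\]
So $\Psi\cdot\uv=\mathbf 0$ for all $\uv\in L_\mcC$ exactly when each $\uw_i$ lies in $L_\mcC^\perp$. By (i) and the finite-dimensional identity $(U^\perp)^\perp=U$, we have $L_\mcC^\perp=(U_\mcC^\perp)^\perp=U_\mcC$. This gives the row characterization of $W_\mcC$.

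No step is a real obstacle. The only point needing care is that the orthogonality relations hold in the finite-dimensional Euclidean space $\R^r$, so the double orthogonal complement really returns $U_\mcC$.
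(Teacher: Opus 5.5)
Your proof is correct and is exactly the unfolding of definitions the paper has in mind; the paper gives no argument beyond saying the lemma follows immediately from the definitions of $L_\mcC$, $U_\mcC$ and $W_\mcC$. For (i) you check that $\pm\uv\in\mcC$ if and only if $(\uu_j,\uv)=0$ for all $j$, and for (ii) you read $\Psi\cdot\uv$ row by row and use $L_\mcC^\perp=U_\mcC$.
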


Recall that the dimension of a cone is defined as the dimension of the real vector space spanned by the cone. Denote by $\overline{P(\mcC)}\subseteq M_{N,r}(\mathbb R)$ the closure (in the Euclidean topology) of the $\mu$-polyhedral cone 
$P(\mcC)$.

\begin{proposition}\label{prp:polar:dimension}
Let $\mcC=h_{ {\uu_1}}^\leq\cap \cdots\cap h_{ {\uu_k}}^\leq \subseteq\R^r$ be a polyhedral cone. Then:  
\begin{enumerate} 
\item[\rm (i)] $\dim P(\mcC) = N\cdot \codim L_\mcC=\dim \overline{P(\mcC)}$.
\item[\rm (ii)] $\overline{P(\mcC)}$ is a polyhedral cone in $M_{N,r}(\R)$. More precisely,
\begin{equation}\label{eq:closure:of:cone}
\overline{P(\mcC)}=\{\Psi\in W_\mcC\mid (\Psi_{N,1},\ldots,
\Psi_{N,r})\cdot\underline{u}_i\le 0 \text{ for all } i=1,\ldots,k\},
\end{equation} 
where $ (\Psi_{N,1},\ldots,\Psi_{N,r})$ is the top row of $\Psi$.
\end{enumerate}
\end{proposition}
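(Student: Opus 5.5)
The plan is to compare $P(\mcC)$ and the right-hand side of \eqref{eq:closure:of:cone}, call it $D$, row by row, and to prove (i) and (ii) together by showing $\overline{P(\mcC)}=D$ and $\dim D=N\cdot\codim L_\mcC$. Throughout, $\uw_N^T,\ldots,\uw_1^T$ denote the rows of $\Psi$, with $\uw_N^T$ the top row, which is the one compared first in the lexicographic order. The first step is to reduce to $W_\mcC$. Since $L_\mcC\subseteq\mcC$ is a subspace, $\Psi\in P(\mcC)$ forces $\Psi\cdot\uv\le\mathbf 0$ and $\Psi\cdot(-\uv)\le\mathbf 0$ for $\uv\in L_\mcC$, so $\Psi\cdot\uv=\mathbf 0$ there. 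Hence $P(\mcC)\subseteq W_\mcC$. By Lemma~\ref{lem:row:lemma}, $W_\mcC$ is the set of matrices all of whose rows lie in $U_\mcC$, so $W_\mcC$ is Euclidean closed and $\dim W_\mcC=N\dim U_\mcC=N\codim L_\mcC$.

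The second step is the inclusion $\overline{P(\mcC)}\subseteq D$. If $\Psi\in P(\mcC)$, then for every $\uv$ in $\mcC$ the lexicographic inequality $\Psi\cdot\uv\le\mathbf 0$ implies in particular that the leading coordinate satisfies $(\uw_N,\uv)\le 0$. This is a linear, Euclidean-closed condition on the top row only. I will apply it to the vectors entering the defining data of $\mcC$, exactly as they appear in \eqref{eq:closure:of:cone}, to obtain the inequalities $(\Psi_{N,1},\ldots,\Psi_{N,r})\cdot\uu_i\le0$. Together with $\Psi\in W_\mcC$ this gives $P(\mcC)\subseteq D$. Since $D$ is an intersection of a subspace with finitely many closed linear half-spaces, it is a polyhedral cone and Euclidean closed, so passing to the closure gives $\overline{P(\mcC)}\subseteq D$.

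The third step, which I expect to be the main obstacle, is the reverse inclusion $D\subseteq\overline{P(\mcC)}$. The plan is to show first that the set $D^\circ$ of $\Psi\in W_\mcC$ whose top-row inequalities are all strict on the relevant generators modulo $L_\mcC$ is contained in $P(\mcC)$. For such $\Psi$, write any $\uv\in\mcC$ as $\uv=\ell+\uv'$ with $\ell\in L_\mcC$, where $\Psi\cdot\ell=\mathbf 0$ because $\Psi\in W_\mcC$. Then either $\uv'=\mathbf 0$, so $\Psi\cdot\uv=\mathbf 0$, or the top coordinate $(\uw_N,\uv')$ is strictly negative, so $\Psi\cdot\uv<\mathbf 0$ lexicographically regardless of the lower rows. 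In both cases $\Psi\in P(\mcC)$.

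The final step uses convexity of the polyhedral cone $D$ inside $W_\mcC$. Its relative interior, after quotienting by $L_\mcC$, is dense in $D$, and a point of $D$ is the limit of $\Psi+\varepsilon\Psi_0$ as $\varepsilon\to 0$, where $\Psi_0\in D^\circ$ is a fixed strictly admissible matrix, for example one whose top row is an interior point of the relevant polyhedral cone in $U_\mcC$ and whose other rows are arbitrary in $U_\mcC$. This gives $D\subseteq\overline{P(\mcC)}$, proving (ii). For (i), note that $D^\circ\subseteq P(\mcC)$ is nonempty and open in $W_\mcC$: the lower $N-1$ rows range freely over $U_\mcC$, and the top row ranges over an open subset of $U_\mcC$. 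Hence $\dim P(\mcC)=\dim W_\mcC=N\codim L_\mcC$, and since $P(\mcC)\subseteq\overline{P(\mcC)}=D\subseteq W_\mcC$, this common value is also $\dim\overline{P(\mcC)}$.

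The delicate points I expect to have to check are the non-emptiness of the strictly admissible set $D^\circ$ and the bookkeeping of which vectors ($\uu_i$ versus generators of $\mcC$) appear in the top-row inequalities. I will match the latter to \eqref{eq:closure:of:cone} as written.
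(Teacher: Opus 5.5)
Your second step fails. From $\Psi\in P(\mcC)$ you correctly get $(\uw_N,\uv)\le 0$ for every $\uv\in\mcC$. You then plug in ``the vectors entering the defining data of $\mcC$'' to obtain $(\uw_N,\uu_i)\le0$. That requires $\uu_i\in\mcC$, which never holds for $\uu_i\neq\mathbf 0$, since $\uu_i\in h_{\uu_i}^\le$ would mean $(\uu_i,\uu_i)\le 0$.

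What membership in $P(\mcC)$ actually forces on the top row is $\uw_N\in\mcC^\circ:=\{\ux\mid(\ux,\uy)\le0 \text{ for all } \uy\in\mcC\}=\langle\uu_1,\ldots,\uu_k\rangle_{\R_{\ge0}}$. The inequalities $(\uw_N,\uu_i)\le0$ instead say $\uw_N\in\mcC$. In fact the display in (ii), read literally, is false. Take $r=1$, $\uu_1=1$, $\mcC=\R_{\le0}$. Then $P(\mcC)=\{\Psi\in M_{N,1}(\R)\mid\Psi\ge\mathbf 0\}$, whose Euclidean closure is $\{\Psi_{N,1}\ge0\}$, while the displayed set is $\{\Psi_{N,1}\le0\}$. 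So the bookkeeping point you deferred is exactly where the argument breaks, and matching the display ``as written'' cannot succeed.

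Your third step is also inconsistent with your second. To show $D^\circ\subseteq P(\mcC)$ you use that the top row is strictly negative on $\mcC\setminus L_\mcC$. That is the strict form of the polar condition, not of $(\uw_N,\uu_i)\le0$. In the example above, the strict set $\{\Psi_{N,1}<0\}$ of the literal reading is even disjoint from $P(\mcC)$.

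The paper's proof in fact works with the correct set. It describes $K_\mcC$ as the matrices in $W_\mcC$ whose top row lies in the polar $\mathfrak P(\widetilde\mcC)$ of the image of $\mcC$ in $U_\mcC$. That is, $\uw_N\in\mcC^\circ$, or equivalently $(\uw_N,\uv)\le0$ for a set of generators $\uv$ of $\mcC$. Once you define $D$ this way, your plan coincides with the paper's: $P(\mcC)\subseteq D$ with $D$ Euclidean closed, and $\Psi+t\Phi_{\uu_0}\in P(\mcC)$ for $\Psi\in D$ and $t>0$. Here $\uu_0\in U_\mcC$ satisfies $(\uu_0,\uv)<0$ for all $\uv\in\mcC\setminus L_\mcC$.

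The non-emptiness you were worried about is immediate with $\uu_0=\uu_1+\cdots+\uu_k$. For $\uv\in\mcC\setminus L_\mcC$ all $(\uu_i,\uv)\le0$, and at least one is strict, since $L_\mcC=\{\uv\mid(\uu_i,\uv)=0\text{ for all }i\}$. The paper obtains $\uu_0$ from the pointedness of $\widetilde\mcC$ instead. Your proof of (i), via a non-empty subset of $P(\mcC)$ that is open in $W_\mcC$, is a fine alternative to the paper's computation of $\dim K_\mcC$ from the full-dimensionality of $\mathfrak P(\widetilde\mcC)$ in $U_\mcC$.
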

\begin{proof}
It follows from the definition of the dimension of a cone that $\dim P(\mcC)=\dim \overline{P(\mcC)}$.
Denote the polyhedral cone on the right side of \eqref{eq:closure:of:cone} by $K_\mcC$,
we have clearly $P(\mcC)\subseteq K_\mcC$ and hence $ \overline{P(\mcC)}\subseteq K_\mcC$.

Lemma~\ref{lem:row:lemma} implies $\dim W_\mcC=N\cdot \codim L_\mcC$, and the image $\widetilde{\mcC}$ of $\mcC$ in $\mathbb R^r/L_\mcC\simeq U_\mcC$ is 
pointed. It follows: its polar cone
$$\mathfrak{P}(\widetilde{\mcC})=\{\uu\in U_\mcC\mid (\uu, \uv)\le 0 \text{ for all } \uv \in \widetilde{\mcC}\}$$
has full dimension and hence: 
\begin{itemize}
\item[\rm (I)] $\dim \mathfrak{P}(\widetilde{\mcC})=\dim U_\mcC$; 
\item[\rm (II)] there exists $\uu_0\in U_\mcC$ such that for any $\uv \in \widetilde{\mcC}\setminus\{\mathbf 0\}$, $(\uu_0, \uv) < 0$: in particular, for any $\uv \in \mcC\setminus L_\mcC$, $(\uu_0, \uv) < 0$.
\end{itemize}

The polyhedral cone $K_\mcC$ consists exactly of those matrices $\Psi$ in $W_\mcC$, such that the top row $\uw^T_N$ of $\Psi$ satisfies: $\uw_N\in \mathfrak{P}(\widetilde{\mcC})$ (note that by Lemma~\ref{lem:row:lemma}, $\underline{w}_N\in U_{\mathcal{C}}$). It follows from (I): $\dim K_\mcC=N\cdot \codim L_\mcC$. Further, consider the matrix $\Phi_{\uu_0}\in W_\mcC$ (see \eqref{Eq:Phi}  and (II)): if $\Psi\in K_\mcC$, then $\Psi+t\Phi_{\uu_0}\in P(\mcC)$ for all $t>0$, and hence $\Psi\in \overline{P(\mcC)}$. We have hence equality $K_\mcC=\overline{P(\mcC)}$, which finishes the proof.
\end{proof}


\subsection{Faces and fans for $\mu$-polyhedral cones}

We start with the definition of a face of a $\mu$-polyhedral cone in $\MNaR$ in our context.

\begin{definition}\label{def:face}
A \emph{face} of a $\mu$-polyhedral cone $C$ in $\MNaR$ is a subset of $C$ obtained as an intersection $C\cap H_{\uu}^0$ for some $\uu\in \mathcal{P}(C)$.
\end{definition}

In the next proposition, we see that the polar correspondence transforms faces of $\mu$-polyhedral cones in $\MNaR$ into co-faces of polyhedral cones in $\R^r$. This correspondence is the key to understand the geometry of $\mu$-polyhedral cones in $\MNaR$.
\begin{proposition}\label{prp:face:coface}
\begin{itemize}
\item[(i)] Let $C$ be a $\mu$-polyhedral cone in $\MNaR$ and $\uu\in\mathcal{P}(C)$. Then the face 
$F = C\cap H_{\uu}^0$ is a $\mu$-polyhedral cone and $\mcP(F) = \mcP(C) + \R\cdot\uu$ is a co-face of $\mcP(C)$.
\item[(ii)] Let $\mcC$ be a polyhedral cone in $\R^r$ and $\uu\in\mcC$. Then $P(\mathcal{C})\subseteq H_{\underline{u}}^{\leq}$, the co-face $\mcF=\mcC + \R\cdot\uu$ is a polyhedral cone in $\R^r$ and $P(\mcF) = P(\mcC)\cap H_{\uu}^0$ is a face of $P(\mcC)$. 
\item[(iii)] Faces correspond to co-faces under the polar correspondence (Theorem \ref{thm:polar:copolar}).
\item[(iv)] A face of a face is a face.
\item[(v)] A $\mu$-polyhedral cone has finitely many faces.
\end{itemize}
\end{proposition}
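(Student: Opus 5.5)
The plan is to reduce everything to the polar correspondence of Theorem~\ref{thm:polar:copolar} together with Lemmas~\ref{lem:polyhedral:cone2} and~\ref{lem:copolar:cone2}, and to the finiteness of co-faces from Proposition~\ref{prp:coface-cofan}.

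For (ii), let $\mcC=\langle \uv_1,\ldots,\uv_k\rangle_{\R_{\geq0}}$ and $\uu\in\mcC$.
\begin{itemize}
\item The inclusion $P(\mcC)\subseteq H^\leq_{\uu}$ is immediate from the definition of $P(\mcC)$.
\item The co-face $\mcF=\mcC+\R\cdot\uu$ is generated by $\uv_1,\ldots,\uv_k,-\uu$, since $\uu\in\mcC$. Hence it is polyhedral.
\item By Lemma~\ref{lem:polyhedral:cone2},
\[
P(\mcF)=\bigcap_i H^\leq_{\uv_i}\cap H^\leq_{-\uu}=P(\mcC)\cap H^\leq_{-\uu}.
\]
\item The lexicographic order is compatible with negation, so $\Psi\cdot(-\uu)\le\mathbf 0$ iff $\Psi\cdot\uu\ge\mathbf 0$. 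Combined with $P(\mcC)\subseteq H^\leq_\uu$, this gives $P(\mcC)\cap H^\leq_{-\uu}=P(\mcC)\cap H^0_\uu$.
\item Finally $\uu\in\mcC=\mcP(P(\mcC))$ by Theorem~\ref{thm:polar:copolar}, so this is a face in the sense of Definition~\ref{def:face}.
\end{itemize}

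For (i), let $C$ be $\mu$-polyhedral and set $\mcC=\mcP(C)$. This is polyhedral by Lemma~\ref{lem:copolar:cone2}, and $C=P(\mcC)$ by Theorem~\ref{thm:polar:copolar}. Since $\uu\in\mcC$, part (ii) gives $F=C\cap H^0_\uu=P(\mcC+\R\uu)$. This is $\mu$-polyhedral: explicitly, it is the intersection $C\cap H^\leq_{-\uu}$. Applying $\mcP$ and the bijection yields $\mcP(F)=\mcC+\R\cdot\uu$, a co-face of $\mcP(C)$.

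Part (iii) is then just the combination of (i) and (ii). Faces of $C$ map under $\mcP$ to co-faces of $\mcP(C)$. Conversely, every co-face of $\mcP(C)$ arises from a face, namely $P(\mcF)$.

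For (iv), take a face $F=C\cap H^0_\uu$ and a face $G=F\cap H^0_{\uw}$ of it, with $\uw\in\mcP(F)=\mcP(C)+\R\uu$.
\begin{itemize}
\item By (iii), $\mcP(G)=\mcP(F)+\R\uw=\mcP(C)+\R\uu+\R\uw$.
\item I would show this is a single co-face of $\mcP(C)$. Write $\uw=\uc-t\uu$ with $\uc\in\mcP(C)$. Choose $s>0$ large so that $\uu':=s\uu+\uc\in\mcP(C)$; in fact $\uu'$ lies in $\mcP(C)$ for any $s\ge0$.
\item Then $\R\uu+\R\uw\subseteq \mcP(C)+\R\uu'$. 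To check this, note $\uc=\uu'-s\uu$, and $\pm\uu$ lies in $\mcP(C)+\R\uu'$ because $\pm\uu\in \mcP(C)+\R\uu'$ up to adding $\uc\in\mcP(C)$. Concretely, $-\uu=\tfrac1s(\uc-\uu')$ with $\uc\in\mcP(C)$, while $\uu\in\mcP(C)$ directly.
\item For the reverse inclusion, $\uu'\in\mcP(C)+\R\uu+\R\uw$ and $-\uu'=-s\uu-\uc$ with $-\uc=\uw\cdot(-1)-t\uu\in\R\uu+\R\uw$, so $-\uu'$ lies there too.
\item Hence $\mcP(G)=\mcP(C)+\R\uu'$, and by (ii) $G=C\cap H^0_{\uu'}$ is a face of $C$.
\end{itemize}
This is the co-fan closure of Proposition~\ref{prp:coface-cofan}(ii) made explicit. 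It is the only step with any content, and I expect the main obstacle to be this bookkeeping of showing that the sum of two co-face operations is again a co-face.

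Part (v) follows from (iii) and the injectivity of $\mcP$ on $\mu$-polyhedral cones: faces of $C$ inject into co-faces of $\mcP(C)$, which are finitely many by Proposition~\ref{prp:coface-cofan}(ii).
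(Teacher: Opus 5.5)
Your proposal is correct. For (i), (ii), (iii) and (v) it is the paper's argument: you write $H^0_{\uu}$ as $H^{\leq}_{\uu}\cap H^{\leq}_{-\uu}$ (using $C\subseteq H^{\leq}_{\uu}$ to drop the first factor) and pass through the polar correspondence. Proving (ii) first and deducing (i) from it is only a reordering.

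The genuine difference is in (iv). The paper simply invokes that the co-faces of a polyhedral cone form a co-fan (Proposition~\ref{prp:coface-cofan}(ii)), whose proof runs through tangent cones, normal fans and classical polarity. You instead verify the needed closure directly, via the identity $\mcP(C)+\R\uu+\R\uw=\mcP(C)+\R(s\uu+\underline{c})$ for $\uw=\underline{c}-t\uu$ with $\underline{c}\in\mcP(C)$.

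Your route is self-contained and elementary: that identity holds for any convex cone, and polyhedrality is only needed afterwards, to translate back to faces via (ii). It costs a short computation, but it avoids leaning on the rather sketchily justified Proposition~\ref{prp:coface-cofan}(ii). The paper's route is one line, but it inherits whatever is implicit in that proposition.

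Two small tidy-ups in your computation:
\begin{itemize}
\item $s$ need not be large; any $s>0$ (for instance $s=1$) works.
\item For the inclusion $\R\uu+\R\uw\subseteq\mcP(C)+\R\uu'$ you only exhibited $\pm\uu$. Handling $\pm\uw$ also requires $-\underline{c}=s\uu-\uu'\in\mcP(C)+\R\uu'$, which follows at once from your remark $\underline{c}=\uu'-s\uu$ but should be stated.
\end{itemize}
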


\begin{proof}
For the part \emph{(i)}: Since $F = C\cap H_{\uu}^0 = C\cap H_{\uu}^\leq\cap H_{-\uu}^\leq$, $F$ is a $\mu$-polyhedral cone. It follows from Theorem \ref{thm:polar:copolar} that 
$$\mcF := \mcP(F) = \mcP(C) + \R_{\geq 0}\cdot\uu + \R_{\geq 0}(-\uu) = \mcP(C) + \R\cdot\uu.$$
Hence $\mcF$ is a co-face of $\mcP(C)$. 

The proof of \emph{(ii)} is similar. Furthermore \emph{(iii)} follows by combining \emph{(i)} and \emph{(ii)}. Claims \textrm{(iv)} and \textrm{(v)} follow by \textrm{(iii)} since the set of co-faces is a co-fan (Proposition \ref{prp:coface-cofan}) and by the polar correspondence (Theorem \ref{thm:polar:copolar}).
\end{proof}

It is clear that a polyhedral cone is not the union of its proper faces. 
In order to prove the same result for $\mu$-polyhedral cones in $\MNaR$ we need some preliminary results.

\begin{proposition}\label{prp:dimension:proper}
    \begin{itemize}
        \item[(i)] If $\mcF$ is a proper co-face of a polyhedral cone $\mcC$ in $\R^r$ then $\dim L_\mcF > \dim L_\mcC$.
        \item[(ii)] If $F$ is a proper face of a $\mu$-polyhedral cone $C$ in $\MNaR$ then $\dim F < \dim C$.
    \end{itemize}
\end{proposition}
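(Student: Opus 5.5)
For (i), write the proper co-face as $\mcF=\mcC+\R\cdot\uu$ with $\uu\in\mcC$. The first step is to show that $L_\mcF\supseteq L_\mcC+\R\cdot\uu$. Since $\mcC\subseteq\mcF$, we have $L_\mcC\subseteq L_\mcF$. Moreover both $\uu$ and $-\uu$ lie in $\mcF$, so $\R\cdot\uu\subseteq L_\mcF$.

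The second step is to show that $\uu\notin L_\mcC$. Suppose instead $\uu\in L_\mcC$. Then $-\uu\in\mcC$, so $\R\cdot\uu\subseteq\mcC$. This gives $\mcF=\mcC$, contradicting properness. Hence $\uu\notin L_\mcC$, so $L_\mcC+\R\cdot\uu$ strictly contains $L_\mcC$, and therefore $\dim L_\mcF\geq\dim L_\mcC+1$.

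For (ii), the plan is to pass through the polar correspondence and then use the dimension formula. Let $F=C\cap H^0_{\uu}$ with $\uu\in\mcP(C)$. By Proposition~\ref{prp:face:coface}(i), $\mcP(F)=\mcP(C)+\R\cdot\uu$ is a co-face of $\mcC:=\mcP(C)$. It is a proper co-face because $F\neq C$: by Theorem~\ref{thm:polar:copolar} the maps $P$ and $\mcP$ are mutually inverse bijections, so $\mcP(F)=\mcP(C)$ would force $F=C$.

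Next we apply Proposition~\ref{prp:polar:dimension}(i) to both cones. Using $C=P(\mcC)$ and $F=P(\mcF)$ with $\mcF=\mcP(F)$, it gives
\[
\dim F=N\cdot\codim L_\mcF<N\cdot\codim L_\mcC=\dim C,
\]
where the strict inequality comes from part (i) together with $N\geq1$.

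The only subtle point I expect is justifying that the co-face $\mcF$ is again a polyhedral cone, so that Proposition~\ref{prp:polar:dimension} applies to it. This is exactly what Proposition~\ref{prp:face:coface}(ii) provides. A secondary point is that properness of the face transfers to properness of the co-face; this is precisely the injectivity of the polar correspondence.
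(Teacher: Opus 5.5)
Your proof is correct and follows essentially the same route as the paper. For (i), the paper argues by contradiction from $L_\mcC\subseteq L_\mcF$ and $\uu\in L_\mcF$, which is your direct argument in another form; for (ii), the paper just cites (i), the polar correspondence and Proposition~\ref{prp:polar:dimension}, and you spell those steps out, including the transfer of properness via injectivity of $P$ and $\mcP$.
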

\begin{proof}
    The statement \emph{(ii)} follows  by \emph{(i)} by the polar correspondence and Proposition~\ref{prp:polar:dimension}. For \emph{(i)} let $\mcF = \mcC + \R\cdot\uu$, with $\uu\in\mcC$. Then $L_\mcC\subseteq L_\mcF$ since $\mcC\subseteq\mcF$. Hence $\dim L_\mcF\geq\dim L_\mcC$.
    
    If we had $\dim L_\mcF = \dim L_\mcC$, then $L_\mcF = L_\mcC$. But since $\R\cdot\uu\subseteq\mcF$, we have $\uu\in L_{\mcF}$. So we got also $\uu\in L_\mcC$ and $\R\cdot\uu\subseteq L_\mcC\subseteq\mcC$. So we conclude $\mcF = \mcC + \R\cdot\uu\subseteq\mcC$, but this is impossible since $\mcF$ is a proper co-face.
\end{proof}

\begin{proposition}\label{prp:cone:not:union}
A $\mu$-polyhedral cone in $\MNaR$ is not the union of its proper faces.
\end{proposition}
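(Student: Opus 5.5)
The plan is to reduce the statement to a dimension count, using Proposition~\ref{prp:dimension:proper} together with the finiteness of faces from Proposition~\ref{prp:face:coface}(v). Let $C$ be a $\mu$-polyhedral cone in $\MNaR$. By Proposition~\ref{prp:face:coface}(v), $C$ has only finitely many faces, say $F_1,\ldots,F_m$ are the proper ones. By Proposition~\ref{prp:dimension:proper}(ii), $\dim F_i<\dim C$ for every $i$. Set $V=\mathrm{span}_\R(C)$, so $\dim V=\dim C$. Each $F_i$ lies in the proper linear subspace $V_i:=\mathrm{span}_\R(F_i)\subsetneq V$, since $\dim V_i=\dim F_i<\dim V$.

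It then suffices to show that $C$ is not contained in $V_1\cup\cdots\cup V_m$. Suppose to the contrary that $C\subseteq \bigcup_i V_i$. Then
\[
V=\mathrm{span}_\R(C)=\bigcup_i (\mathrm{span}_\R(C)\cap \ldots)
\]
is not directly available, so I argue more carefully as follows. The cone $C$ spans $V$, so $C$ contains a basis $\Psi_1,\ldots,\Psi_d$ of $V$, and hence the whole cone $K=\langle\Psi_1,\ldots,\Psi_d\rangle_{\R_{\ge0}}$, because $C$ is a convex cone. The cone $K$ is a full-dimensional simplicial cone in $V$, so it contains a nonempty Euclidean-open subset of $V$, for instance the interior of $K$ relative to $V$.

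A nonempty open subset of a finite-dimensional real vector space cannot be covered by finitely many proper linear subspaces. Each $V_i$ is closed and nowhere dense in $V$, so this follows from the Baire category theorem, or elementarily: a finite union of proper subspaces is a finite union of zero sets of nonzero linear forms, and the product of these forms is a nonzero polynomial, which cannot vanish on an open set. Hence there is a point of $K\subseteq C$ lying in no $V_i$, and in particular in no $F_i$. This contradicts the assumption that $C$ is the union of its proper faces.

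Two points need care. First, if $C$ has no proper faces the claim is trivially true, provided $C$ is nonempty; it always is, since $\mathbf 0\in C$. Second, the argument relies on the dimension of a cone meaning the dimension of its linear span, as stated before Proposition~\ref{prp:polar:dimension}. With that convention, Proposition~\ref{prp:dimension:proper}(ii) gives exactly the strict inclusion of spans needed. The main obstacle has therefore already been handled by Proposition~\ref{prp:dimension:proper}, whose input is the polar correspondence. What remains is the routine covering argument, which uses only the Euclidean structure on $V$ and never the order topology.
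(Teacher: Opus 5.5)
Your proof is correct, and it takes a slightly more economical route than the paper. Both arguments are dimension counts that rest on the finiteness of faces (Proposition~\ref{prp:face:coface}(v)) and on Proposition~\ref{prp:dimension:proper}(ii).

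The paper first passes to Euclidean closures. From $C=\bigcup_i C_i$ it gets $\overline{C}\subseteq\bigcup_i\overline{C_i}$. It then invokes Proposition~\ref{prp:polar:dimension}(ii) to know these closures are genuine polyhedral cones with the same dimensions. Finally it appeals, without comment, to the classical fact that a polyhedral cone cannot be covered by finitely many polyhedral cones of smaller dimension.

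You show this detour is unnecessary. Since $C$ is a convex cone spanning $V$, it contains a simplicial cone with nonempty interior in $V$. Such a set cannot be covered by the finitely many proper subspaces $\mathrm{span}_\R(F_i)$. So the closure description in Proposition~\ref{prp:polar:dimension}(ii) plays no role, and you are in effect proving the covering fact the paper leaves implicit. The paper's version gains only that its last step is phrased entirely in the familiar setting of polyhedral cones.

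One cosmetic point: delete the displayed false start ``$V=\mathrm{span}_\R(C)=\bigcup_i(\ldots)$ is not directly available''. The argument that follows it stands on its own.
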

\begin{proof} Suppose that the $\mu$-polyhedral cone $C$ is the union of its proper faces which, by Proposition \ref{prp:face:coface}, are $\mu$--polyhedral cones and finite in number: $C = \bigcup_{i = 1 }^k C_i$ with $C_1,\ldots, C_k$ the proper faces of $C$. It follows for the closure (with respect to the Euclidean topology) that $\overline{C}\subseteq \bigcup_{i = 1 }^k \overline{C_i}$.
    
By Theorem~\ref{thm:polar:copolar}, $C$, $C_1$, \ldots, $C_k$ are the polar cones of polyhedral cones in $\mathbb R^r$. So, by Proposition~\ref{prp:polar:dimension} \textrm{(ii)}, $\overline{C}$ and the closures $\overline{C_i}$, $i = 1,\ldots,k$, are polyhedral cones. But, by Proposition~\ref{prp:dimension:proper} and Proposition~\ref{prp:polar:dimension} \textrm{(i)} we have $\dim \overline{C_i} = \dim C_i < \dim C = \dim \overline{C}$ for all $i=1,\ldots,k$. It follows that $\overline{C}\subseteq \bigcup_{i = 1 }^k \overline{C_i}$ is impossible.
\end{proof}

We are interested in fans of $\mu$-polyhedral cones in $\MNaR$.

\begin{definition}
A finite collection $\bfSigma$ of $\mu$-polyhedral cones in $\MNaR$ is called a \emph{fan} if the following two conditions hold:
\begin{itemize}
\item[(1)] if $C\in\bfSigma$ and $F$ is a face of $C$ then $F\in\bfSigma$;
\item[(2)] if $C_1,C_2\in\bfSigma$ then $C_1\cap C_2$ is a face of both $C_1$ and $C_2$ (hence an element of $\bfSigma$). 
\end{itemize}
Moreover, a fan is \emph{complete} if $\bigcup_{C\in\bfSigma}C = \MNaR$.
\end{definition}

The polar correspondence induces a duality between fans and co-fans.

\begin{theorem}\label{thm:fan:cofan}
If $\bfSigma$ is a fan of $\mu$-polyhedral cones in $\MNaR$ then $\mfS=\{\mcP(C) \mid C\in\bfSigma\}$ is a co-fan of polyhedral cones in $\R^r$. On the other hand, if $\mfS$ is a co-fan of polyhedral cones in $\R^r$ then $\bfSigma = \{P(\mcC) \mid \mcC\in\mfS\}$ is a fan of $\mu$-polyhedral cones in $\MNaR$.
\end{theorem}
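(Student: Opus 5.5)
The plan is to transport the two axioms through the polar correspondence of Theorem~\ref{thm:polar:copolar}, using Proposition~\ref{prp:face:coface}~(iii) to match faces with co-faces. The dictionary is:
\begin{itemize}
\item $P$ and $\mcP$ are mutually inverse bijections between polyhedral cones in $\R^r$ and $\mu$-polyhedral cones in $\MNaR$;
\item intersection of $\mu$-polyhedral cones corresponds to sum of polyhedral cones;
\item $F$ is a face of $C$ if and only if $\mcP(F)$ is a co-face of $\mcP(C)$, and dually $\mcF$ is a co-face of $\mcC$ if and only if $P(\mcF)$ is a face of $P(\mcC)$.
\end{itemize}
Finiteness is preserved because the maps are bijections.

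For the first direction, let $\bfSigma$ be a fan and set $\mfS=\{\mcP(C)\mid C\in\bfSigma\}$. Each $\mcP(C)$ is polyhedral by Lemma~\ref{lem:copolar:cone2}.

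For axiom (1), take $C\in\bfSigma$ and a co-face $\mcF=\mcP(C)+\R\cdot\uu$ with $\uu\in\mcP(C)$. By Proposition~\ref{prp:face:coface}~(ii), $P(\mcF)=P(\mcP(C))\cap H^0_{\uu}=C\cap H^0_{\uu}$ is a face of $C$. Hence $P(\mcF)\in\bfSigma$, and therefore $\mcF=\mcP(P(\mcF))\in\mfS$.

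For axiom (2), take $C_1,C_2\in\bfSigma$. By Theorem~\ref{thm:polar:copolar}, $\mcP(C_1)+\mcP(C_2)=\mcP(C_1\cap C_2)$. Since $C_1\cap C_2$ is a face of $C_i$, say $C_1\cap C_2=C_i\cap H^0_{\uu_i}$ with $\uu_i\in\mcP(C_i)$, Proposition~\ref{prp:face:coface}~(i) gives $\mcP(C_1\cap C_2)=\mcP(C_i)+\R\cdot\uu_i$. This is a co-face of $\mcP(C_i)$ for $i=1,2$.

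The second direction is symmetric. Given a co-fan $\mfS$, each $P(\mcC)$ is $\mu$-polyhedral by Lemma~\ref{lem:polyhedral:cone2}.

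For axiom (1), a face of $P(\mcC)$ has the form $F=P(\mcC)\cap H^0_{\uu}$ with $\uu\in\mcP(P(\mcC))=\mcC$. Proposition~\ref{prp:face:coface}~(ii) gives $F=P(\mcC+\R\cdot\uu)$. Since $\mcC+\R\cdot\uu$ is a co-face of $\mcC$, it lies in $\mfS$, so $F\in\bfSigma$.

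For axiom (2), Theorem~\ref{thm:polar:copolar} gives $P(\mcC_1)\cap P(\mcC_2)=P(\mcC_1+\mcC_2)$. Here $\mcC_1+\mcC_2=\mcC_i+\R\cdot\uu_i$ for some $\uu_i\in\mcC_i$, so again by Proposition~\ref{prp:face:coface}~(ii) the intersection $P(\mcC_1)\cap P(\mcC_2)$ is a face of $P(\mcC_i)$ for $i=1,2$.

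The only point needing care is the claim that intersection corresponds to sum in both directions, in particular $P(\mcC_1+\mcC_2)=P(\mcC_1)\cap P(\mcC_2)$. Theorem~\ref{thm:polar:copolar} states this for $\mcP$. It also follows directly from the definition of the $N$-polar: $\Psi\cdot(\uv_1+\uv_2)\le\mathbf 0$ for all $\uv_1\in\mcC_1$, $\uv_2\in\mcC_2$ if and only if $\Psi\cdot\uv_j\le\mathbf 0$ for each $j$. The backward implication uses compatibility of the lexicographic order with addition; the forward one uses $\mathbf 0\in\mcC_j$.

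Otherwise the argument is bookkeeping, and I expect no real obstacle.
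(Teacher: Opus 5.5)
Your proof is correct and follows the paper's approach: the paper's proof just says that both statements follow from Theorem~\ref{thm:polar:copolar} and Proposition~\ref{prp:face:coface}. You have written out the bookkeeping that this one-line argument leaves implicit, namely transporting each axiom through the face/co-face and intersection/sum dictionary, including the check that $P(\mcC_1+\mcC_2)=P(\mcC_1)\cap P(\mcC_2)$.
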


\begin{proof} 
Both statements follow from Theorem \ref{thm:polar:copolar} and Proposition \ref{prp:face:coface}.
\end{proof}


\section{Higher dimensional GKZ-cones}\label{sec:HiDiGKZ}

Throughout this section fix integers $n,N\geq 1$.

\subsection{Polytopes and functions}\label{subsec:polytopes:functions}
 
We start with a finite set $A=\{v_1,\ldots,v_r\}\subseteq \mathbb R^{n-1}$. Without loss of generality we assume that the affine span of $A$ is $\mathbb R^{n-1}$. Denote by $P := \mathrm{conv}(A)$ the polytope obtained as the convex hull of $A$. The pair $(P,A)$ is called a \emph{marked polytope}.

Since we often switch between the vector spaces $\mathbb R^A\cong\mathbb{R}^r$, $\mathbb R^n$ and $\mathbb R^N$, in this section, we keep the notation $\uu, \uv, \ldots$ for column vectors in $\mathbb R^A$ and  $\mathbf a, \mathbf b, \ldots$ for column vectors in $\mathbb R^N$, and $v,w,\ldots\in \R^n$  respectively $\mathbb R^{n-1}$.


\begin{definition}\label{def:polytopal:subdivision}
By a \emph{polytopal subdivision $\mathcal Q=(Q_i,A_i)_{i\in I}$ of $(P, A)$ by marked polytopes
$(Q_i,A_i)$, $i\in I$}, we mean a decomposition of $P$ into a finite number of $(n-1)$-dimensional 
polytopes $Q_i$, $i\in I$, such that $P=\bigcup_{i\in I} Q_i$, $A_i\subseteq A\cap Q_i$ for all $i\in I$, 
and for $i,j\in I$, $i\not=j$: 
the intersection $Q_i\cap Q_j$ is either empty or a face of both,  and $A_i\cap Q_i\cap Q_j=A_j\cap Q_i\cap Q_j$. 
\end{definition}

Note that the $Q_i$, $i\in I$, form a polytopal complex. We do not require every element of $A$ to appear as a marking of one of the polytopes in $\mathcal Q$.

\begin{definition}\label{def:triangulation}
A \emph{triangulation $\mathcal T$ of $(P, A)$} is a polytopal subdivision of $(P, A)$ by marked polytopes $\mathcal Q=(Q_i,A_i)_{i\in I}$ such that all $i\in I$: $Q_i$ is a simplex and $A_i$ is the set of vertices of $Q_i$.
\end{definition}

\begin{example}\label{example:simplex:r2}
Set $A=\{(0,0), (3,0), (0,3), (1,1)\}\subseteq \mathbb R^2$ and let  $P$ be the simplex obtained as convex hull of $A$. 
The marked polytope $(P,A)$ has two triangulations by marked polytopes: $\mathcal Q_1=(P,A')$, where $A'\subseteq A$
are the vertices of $P$, and $\mathcal Q_2=(Q_i,A_i)_{i=1,2,3}$, where the $Q_i$ are the triangles obtained
by joining the point $(1,1)$ with two of the vertices of $P$, and $A_i$ is the set of vertices of $Q_i$. There is one more
polytopal subdivision: $\mathcal Q_0=(P,A)$. Though $P$ is a simplex, this is not a triangulation
because $(1,1)\in A$ is not a vertex of $P$.
\[
\begin{array}{ccc}
\begin{tikzpicture}
    \draw[thick] [fill=cyan!30] (0,0) coordinate(A) -- (3,0) coordinate(B) -- (0,3)coordinate(C) -- cycle;
    \filldraw[blue] (3,0) circle (2pt);
    \filldraw[blue] (0,0) circle (2pt);
    \filldraw[blue] (0,3) circle (2pt);
\end{tikzpicture}\ 
&\ 
\begin{tikzpicture}
    \draw[thick] [fill=cyan!30] (0,0) coordinate(A) -- (3,0) coordinate(B) -- (1,1)coordinate(C) -- cycle;
    \draw[thick]  [fill=green!30] (0,0) coordinate(A) -- (0,3) coordinate(B) -- (1,1)coordinate(C) -- cycle; 
    \draw[thick]  [fill=red!50](3,0) coordinate(A) -- (0,3) coordinate(B) -- (1,1)coordinate(C) -- cycle; 
    \filldraw[blue] (3,0) circle (2pt);
    \filldraw[blue] (0,0) circle (2pt);
    \filldraw[blue] (1,1) circle (2pt);
    \filldraw[blue] (0,3) circle (2pt);
\end{tikzpicture}\ 
&\ 
\begin{tikzpicture}
    \draw[thick] [fill=cyan!30] (0,0) coordinate(A) -- (3,0) coordinate(B) -- (0,3)coordinate(C) -- cycle;
    \filldraw[blue] (3,0) circle (2pt);
    \filldraw[blue] (0,0) circle (2pt);
    \filldraw[blue] (0,3) circle (2pt);
    \filldraw[blue] (1,1) circle (2pt);
\end{tikzpicture}\\
\mathcal Q_1=(P,A')& \mathcal Q_2=(Q_i,A_i)_{i=1,2,3} & \mathcal Q_0=(P,A)\\
\end{array}
\]
\end{example}

We define a relation between the polytopal subdivisions of $(P,A)$.

\begin{definition}\label{def:refine:subdivision}
Let  $\mathcal Q=(Q_i,A_i)_{i\in I}$,  $\mathcal Q'=(Q'_j,A'_j)_{j\in J}$ be polytopal subdivisons of $(P,A)$ by marked polytopes. We say that \emph{$\mathcal Q$ refines $\mathcal Q'$\,} if, for each $j\in J$, the collection of $(Q_i, A_i)$ such that $Q_i\subseteq Q'_j$ forms a polytopal subdivision of $(Q_j', A_j')$ by marked polytopes. 
\end{definition}
This makes the set of all polytopal subdivisons of $\mathcal Q$ into a poset such that
triangulations are minimal elements of this poset and $(P,A)$ is the unique maximal element.

\begin{example}
In Example~\ref{example:simplex:r2} both $\mathcal Q_1$ and $\mathcal Q_2$ refine $\mathcal Q_0$.
\end{example}

Since we only discuss polytopal subdivisons  by marked polytopes, 
we often omit in the following the addition \emph{by marked polytopes}.
Similarly, if $(Q,A_Q)\in \mathcal Q$ is a pair occurring in a polytopal subdivison, 
we often forget about the accompanying subset $A_Q\subseteq A$ and just write: 
let $Q\in\mathcal Q$ be a polytope.

\smallskip

Via the embedding $\mathbb R^{n-1}\hookrightarrow \mathbb R^n$, $u\mapsto (1,u)$, we identify $\mathbb R^{n-1}$ with an affine hyperplane in $\mathbb R^n$, and we identify $P$ with its image in $\mathbb R^n$. To avoid an unnecessary accumulation of notation, we write
$v$ for elements in $\R^{n-1}$ and $\R^n$. In particular, the element of $A$ are also denoted by $u, w$  both as elements of $\R^{n-1}$ 
and as elements of  $\R^n$, it will be clear from the context where they are.
The \textit{cone $K(P)$ associated to $P$} is the cone in $\mathbb R^n$ generated by the polytope $P$, it is a polyhedral cone.
 
\begin{definition}\label{definition:piecewise:linear}
A \textit{$\mathcal Q$--piecewise-linear map} $g: K(P) \rightarrow  \mathbb R^N$ is a map which is linear on the cones $K(Q)$, $Q\in\mcQ$.
\end{definition}

Notice that a map $g:K(P)\longrightarrow\R^N$ is linear on $K(Q)$ if and only if
\begin{itemize}
\item[(i)] $g$ is affine on $Q$, i.e. $g(t u + (1-t) v) = tg(u) + (1-t)g(v)$ for each $u, v\in Q$ and $0\leq t\leq 1$; and
\item[(ii)] $g$ is homogeneous, i.e. $g(\lambda  {u}) = \lambda g( {u})$ for each $\lambda\in\R_{\geq 0}$ and $ {u}\in Q$.
\end{itemize}

The set of $\mathcal Q$-piecewise-linear maps is naturally an $\R$--vector space.
Recall that on $\mathbb{R}^N$ we consider the lexicographic ordering $\geq$. 

\begin{definition}\label{def:convex:upward}
Let $K\subseteq \mathbb R^n$ be a convex subset.
A continuous map $g:K\rightarrow \mathbb R^N$ is called \emph{convex upward} if for all $u, v\in K$ and $t\in[0,1]$ holds:  
\begin{equation}\label{definition:convex}
g(tu+(1-t)v) \geq tg(u)+(1-t)g(v).
\end{equation} 
\end{definition}

 \begin{remark}\label{convex:equivalent}\rm
For a $\mathcal Q$-piecewise-linear map $g:K(P)\rightarrow \mathbb R^N$, the homogeneity and the convex upward property imply: for all 
$u, v\in K(P)$ we have

\begin{equation}\label{remark:equivalence:convex}
g(u+v) \ge  \frac{1}{2}g(2u)+\frac{1}{2}g(2v)=g(u)+g(v).
\end{equation} 
One can show: the conditions \eqref{definition:convex} and \eqref{remark:equivalence:convex} are equivalent for $\mathcal Q$-piecewise-linear maps.
\end{remark}

The set of convex upward maps $g:K(P)\to\mathbb{R}^N$ forms a cone in the vector space of $\mathcal{Q}$-piecewise-linear maps. Moreover, a homogeneous map $g:K(P)\longrightarrow\R^N$ is convex upward if and only if its restriction to $P$ is convex upward.

Let $\mathcal Q=(Q_i,A_i)_{i\in I}$ be a polytopal subdivision of  $(P, A)$ and let $g:K(P)\rightarrow \mathbb R^N$ be a $\mathcal Q$-piecewise-linear map. If $Q\in \mathcal Q$ is a polytope, then $\{v \mid v\in A_Q\}$ is a spanning set for $\mathbb R^n$. So by definition, the restriction $g\vert_{K(Q)}$ of a $\mathcal Q$-piecewise-linear map to the cone $K(Q)$ coincides with the restriction $\psi_Q\vert_{K(Q)}$ of a uniquely determined linear map $\psi_Q:\mathbb R^n\rightarrow \mathbb R^N$. This leads to the following characterization of the convex upward-property:

\begin{proposition}\label{prop:convex:up:=:minimum}
Let $\mathcal Q=(Q_i,A_i)_{i\in I}$ be a polytopal subdivision of  $(P, A)$ and let $g:K(P)\rightarrow \mathbb R^N$ be a $\mathcal Q$-piecewise-linear map. The following are equivalent:
\begin{enumerate}
\item[\rm (i)] $g$ is convex upward;
\item[\rm (ii)] for all $u\in K(P)$, $g(u)=\min\{\psi_{Q}( u)\mid Q\in \mathcal Q\}$, where $\min$ is taken with respect to the lexicographic order on $\mathbb R^N$.
\end{enumerate}
\end{proposition}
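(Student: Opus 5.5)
We prove the two implications separately. Throughout we use that the lexicographic order on $\R^N$ is a total order compatible with addition and with multiplication by non-negative scalars. We also use that each $\psi_Q$ is linear on all of $\R^n$ and agrees with $g$ on $K(Q)$.

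\emph{(ii) $\Rightarrow$ (i).} Assume $g(u)=\min_Q\psi_Q(u)$ for all $u\in K(P)$. Given $u,v\in K(P)$ and $t\in[0,1]$, choose $Q_0\in\mcQ$ attaining the minimum at $w=tu+(1-t)v$. Linearity of $\psi_{Q_0}$ gives
\[
g(w)=\psi_{Q_0}(w)=t\psi_{Q_0}(u)+(1-t)\psi_{Q_0}(v)\ge t\,g(u)+(1-t)\,g(v).
\]
The inequality holds because $\psi_{Q_0}(u)\ge g(u)$ and $\psi_{Q_0}(v)\ge g(v)$, and the order is compatible with non-negative scaling and addition. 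Continuity of $g$ is part of being $\mcQ$-piecewise-linear, since $g$ is linear on finitely many closed cones covering $K(P)$ and agrees on overlaps. Hence $g$ is convex upward.

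\emph{(i) $\Rightarrow$ (ii).} Fix $u\in K(P)$. Since the cones $K(Q)$ cover $K(P)$, there is some $Q_1$ with $u\in K(Q_1)$, so $g(u)=\psi_{Q_1}(u)$. It therefore suffices to show $\psi_Q(u)\ge g(u)$ for every $Q\in\mcQ$. Fix $Q$ and choose a point $q$ in the interior of $K(Q)$; this is possible because $Q$ is $(n-1)$-dimensional. For small $s>0$ the point $q_s=(1-s)q+su$ lies in $K(Q)$. Convexity upward then gives
\[
\psi_Q(q_s)=g(q_s)\ge(1-s)g(q)+s\,g(u)=(1-s)\psi_Q(q)+s\,g(u).
\]
Expanding the left side by linearity, $\psi_Q(q_s)=(1-s)\psi_Q(q)+s\psi_Q(u)$. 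Cancelling the common term $(1-s)\psi_Q(q)$, which is allowed since lexicographic order is invariant under translation, yields $s\psi_Q(u)\ge s\,g(u)$. Dividing by $s>0$ gives $\psi_Q(u)\ge g(u)$, as required.

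The only delicate point is this last step: we must be sure we never need continuity or limits in the lexicographic order, which is badly behaved. The argument avoids limits by fixing a single small $s$ and using only the algebraic compatibility of the order. A point $q_s\in K(Q)$ with $s>0$ exists because $q$ is interior to the full-dimensional cone $K(Q)$.
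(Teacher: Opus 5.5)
Your proof is correct and follows essentially the paper's route. For (i)$\Rightarrow$(ii) the paper writes $u=u_+-u_-$ with $u_\pm\in K(Q)$ and applies superadditivity $g(u+u_-)\ge g(u)+g(u_-)$, which is your interior-point argument with $u_-=\frac{1-s}{s}q$ and $u_+=\frac1s q_s$; for (ii)$\Rightarrow$(i) the paper checks superadditivity of the minimum, whereas your direct check of the convex-combination inequality avoids invoking the superadditivity/convexity equivalence of Remark~\ref{convex:equivalent}.
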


\begin{proof} Suppose that $g$ is convex upward, let $Q\in\mathcal{Q}$ be a polytope and let $u\in K(P)$.

Since $A_Q$ is a spanning set for $\mathbb{R}^n$, we may write $u = u_+- u_-$ with both $u_+$ and $u_-$ in $K(Q)$. Hence by Remark \ref{convex:equivalent}, $\psi_Q(u_+)=g(u+u_-)\geq g(u) + g(u_-) = g(u) + \psi_Q(u_-)$ and we conclude $g(u)\leq \psi_Q(u_+) - \psi_Q(u_-) = \psi_Q(u)$.

But there must exist a polytope $Q'\in\mathcal Q$ such that $u\in K(Q')$, so $g(u) = \psi_{Q'}( u)$ and we have proved that $g$ is the minimum as stated in \emph{(ii)}.

On the other hand, suppose that $g$ is the minimum as in \emph{(ii)} and let $  u,  u'\in K(P)$. Then we have
$g(u + u')  = \min\{\psi_Q(  u) + \psi_Q(  u')\,|\,Q\in\mathcal Q\} $, and hence:
$$g(u + u')   \geq    \min\{\psi_Q(  u)\,|\,Q\in\mathcal Q\} + \min\{\psi_Q(  u')\,|\,Q\in\mathcal Q\}=g(u) + g(u'),$$
so the map $g$ is convex upward.
\end{proof}

Next we give a description to the linearity regions of a $\mcQ$--piecewise-linear and convex upward map. Set 
$A_{\mathcal Q}:=\bigcup_{i\in I} A_i$: it is the subset of $A$ containing all the vertices of the subdivision. 
By a \emph{domain $D$} contained in $P$ we mean a closed domain, i.e. $D$ is the closure 
(in the Euclidean topology) of a non-empty connected open subset in $P$.


\begin{definition}\label{def:domain:linearity}
A \emph{domain of linearity} for a $\mathcal{Q}$-piecewise-linear and convex upward map $g:K(P)\to\R^N$ is a domain $D\subseteq P$ 
such that $g_{|K(D)}$ is linear, and $D$ is maximal with this property.
\end{definition}

\begin{lemma}\label{lem:domain:linearity}
If $g:K(P)\to\R^N$ is a $\mcQ$--piecewise-linear and convex upward map,
then the domains of linearity of $g$ are polytopes with vertices in $A_\mcQ$. More precisely,
each domain of linearity is a convex set which is a union of polytopes in $\mcQ$.
\end{lemma}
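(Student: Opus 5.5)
We first show that any domain of linearity $D$ is a union of polytopes of $\mcQ$, and then that it is convex; being convex and a finite union of polytopes with vertices in $A_\mcQ$, $D$ is then a polytope with vertices in $A_\mcQ$.

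\emph{Union of cells.} Let $D$ be a domain of linearity, with $g_{|K(D)}=\psi$ linear. Since $D$ is the closure of a nonempty open subset of $P$, and $P=\bigcup_{Q\in\mcQ}Q$ is a finite union of closed full-dimensional polytopes, the interior of $D$ meets the interior of some $Q$ in an open set. On that open set $\psi$ and $\psi_Q$ agree, so $\psi=\psi_Q$ on $\R^n$, because they are linear maps agreeing on an open subset of the affine hyperplane spanning $\R^n$. Set
\[
D' := \bigcup\{\, Q\in\mcQ \mid \psi_Q=\psi \,\}.
\]
The plan is to show $D=D'$. For $D\subseteq D'$, every point of $\mathrm{int}(D)$ lies in the closure of interiors of cells $Q$ meeting $\mathrm{int}(D)$ in an open set, and each such $Q$ has $\psi_Q=\psi$. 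Since $D$ is the closure of $\mathrm{int}(D)$, we get $D\subseteq D'$. For the reverse inclusion, I would show that $D'$ is itself a domain on which $g$ is linear; maximality of $D$ then forces $D=D'$.

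\emph{Convexity.} By Proposition~\ref{prop:convex:up:=:minimum}, $g=\min_Q\psi_Q$, so $g\le\psi$ on all of $K(P)$, with equality exactly on $K(D')$. Take $u,v\in D'$ and $t\in[0,1]$. Since $g$ is convex upward,
\[
g(tu+(1-t)v)\ \ge\ t\,g(u)+(1-t)\,g(v)\ =\ \psi(tu+(1-t)v).
\]
Combined with $g\le\psi$, this gives $g(tu+(1-t)v)=\psi(tu+(1-t)v)$. So the equality set $E:=\{x\in P\mid g(x)=\psi(x)\}$ is convex. Also $E\supseteq D'$, and $g=\psi$ on $K(E)$.

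\emph{Main obstacle: $E=D'$.} A point $x\in E$ lies in some cell $Q$, but possibly only in a cell with $\psi_Q\neq\psi$, where the equality $\psi_Q(x)=\psi(x)$ happens on a lower-dimensional set. The way around this is to use convexity of $E$ together with $E\supseteq D'\supseteq Q_0$, where $Q_0$ is full-dimensional. Then $E$ is a convex set with nonempty interior, so $E$ is the closure of its interior. Each interior point of $E$ lies in the closure of interiors of cells $Q$ such that $Q\cap\mathrm{int}(E)$ is open and nonempty, and on such a cell $\psi_Q=\psi$. Hence $E\subseteq D'$, and so $E=D'$.

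This shows that $D'$ is convex with nonempty interior, hence a domain, and $g$ is linear on $K(D')$. Therefore $D=D'=E$ is a convex union of polytopes of $\mcQ$, and its vertices lie among the vertices of those cells, i.e. in $A_\mcQ$.
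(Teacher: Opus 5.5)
Your proof is correct and follows essentially the same route as the paper. Both arguments identify $\psi=\psi_Q$ via an open overlap, show that the equality set $\{x\in P\mid g(x)=\psi_Q(x)\}$ is convex (using convex-upwardness together with $g\le\psi_Q$ from Proposition~\ref{prop:convex:up:=:minimum}), conclude by maximality, and use a local full-dimensional-overlap argument to show that every point of this set lies in a cell $Q'$ with $\psi_{Q'}=\psi_Q$; you phrase this last step via the convex body being the closure of its interior, where the paper uses $\mathrm{conv}(Q\cup\{y\})$.

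One small slip: the phrase ``with equality exactly on $K(D')$'' in your convexity paragraph is only justified after your final step, and the convexity computation should be run for $u,v\in E$ rather than $u,v\in D'$ (it works verbatim).
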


\begin{proof} Let $D$ be a domain of linearity of $g$ and let $\varphi:\R^n\longrightarrow \R^N$ be a linear map such that $g_{|K(D)} = \varphi_{|K(D)}$. Since $D$ contains a non-empty open subset and the polytopes in $\mathcal Q$ are full dimensional, there exists a polytope $Q\in\mathcal Q$ whose open part intersects non-trivially the open part in $D$. It follows $\varphi = \psi_Q$.

Let $D'= \{ x\in P\,|\, g(x) = \psi_Q(x)\}$, clearly $D\subseteq D'$. We show: $D'$ is a convex set. 
If $x,y\in D'$ and $0\leq t\leq 1$, then, $g$ being convex upward, we have
\begin{equation}\label{equation:domain:linearity}
g(t {x}+(1-t) {y})\geq t g( {x})+(1-t)g( {y})=t\psi_Q( {x})+(1-t)\psi_Q( {y})=\psi_Q(t {x}+(1-t) {y}).
\end{equation}
Proposition \ref{prop:convex:up:=:minimum} \textit{(ii)} implies that we have equality everywhere in \eqref{equation:domain:linearity}, 
thus $tx + (1-t)y\in D'$ for all $t\in[0,1]$, and the convexity of $D'$ follows.

The subset $D'$ is closed and convex, and its interior is non-empty since it contains $Q$.
So $D'$ is the closure of an open convex (in particular connected) set, hence $D'$ is a domain. 
But $g$ is linear on $K(D')$ so $D= D'$ by the maximality of $D$.

It remains to show that $D$ is a union of polytopes in $\mathcal Q$.  Take $ {y}\in D\setminus Q$.
The convexity of $D$ implies that the convex hull $C:=\mathrm{conv}(Q\cup\{y\})$ is a convex set contained in $D$.
Since $Q$ is a full dimensional polytope, $D$ as well as $C$ are full dimensional convex sets. In particular,
given an open neighborhood $U$ of $ {y}$ (viewed as element in $\mathbb R^{n-1}$), then $U$ meets the relative interior of $C$.
Let $Q_1,\ldots,Q_t$ be the polytopes in $\mathcal Q$ containing $ {y}$. By choosing $U$ small enough, we can assume that $U\cap P$
is the same as the union of the $Q_i\cap U$, $i=1,\ldots,t$. Since $U$ meets the relative interior of $C$, there must exist $1\le j\le t$ such that 
$Q_j$ meets the relative interior of $C$, and hence $Q_j\cap C$ is full dimensional. It follows that the equality
$\psi_{Q_j}\vert_{Q_j\cap C}=\psi_{Q}\vert_{Q_j\cap C}$ implies $\psi_{Q_j}\vert_{Q_j}=\psi_{Q}\vert_{Q_j}$. But this implies $Q_j\subseteq D$.

Thus we have shown: given $ {y}\in D\setminus Q$, there exists a polytope $Q'\in\mathcal Q$ such that $ {y}\in Q'$ and 
$Q'\subseteq D$. But this implies that $D$ is a union of polytopes in $\mcQ$.
Now $D$ is both a convex set and a union of polytopes, hence $D$ itself is a polytope.
\end{proof}

\begin{corollary}\label{cor:linearity:only:in:domain}
    Let $v_1,\ldots,v_m \in K(P)$. Then $g(v_1 + \cdots + v_m) = g(v_1) + \cdots + g(v_m)$ if and only if $v_1,\ldots,v_m$ are contained in the cone over a common domain of linearity of $g$.
\end{corollary}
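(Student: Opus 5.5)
The "if" direction is immediate. Suppose $v_1,\ldots,v_m\in K(D)$ for a domain of linearity $D$, and let $\varphi$ be the linear map with $g_{|K(D)}=\varphi_{|K(D)}$. Since $K(D)$ is a convex cone (Lemma \ref{lem:domain:linearity} shows $D$ is convex), the sum $v_1+\cdots+v_m$ also lies in $K(D)$. Hence
\[
g\Bigl(\sum_i v_i\Bigr)=\varphi\Bigl(\sum_i v_i\Bigr)=\sum_i\varphi(v_i)=\sum_i g(v_i).
\]

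For the "only if" direction, set $v=v_1+\cdots+v_m$.

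\textbf{Step 1: find a polytope $Q$ at $v$.} Since $v\in K(P)$ and $\mathcal Q$ covers $P$, there is a polytope $Q\in\mathcal Q$ with $v\in K(Q)$, so $g(v)=\psi_Q(v)$.

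\textbf{Step 2: each summand agrees with $\psi_Q$.} By Proposition \ref{prop:convex:up:=:minimum}(ii), $g(v_i)\le\psi_Q(v_i)$ for every $i$. Combining this with the hypothesis,
\[
\sum_i g(v_i)=g(v)=\psi_Q(v)=\sum_i\psi_Q(v_i).
\]
The lexicographic order is compatible with addition. So if some inequality $g(v_i)\le\psi_Q(v_i)$ were strict, the sums would satisfy $\sum_i g(v_i)<\sum_i\psi_Q(v_i)$. This is a contradiction, so $g(v_i)=\psi_Q(v_i)$ for all $i$.

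\textbf{Step 3: locate a domain of linearity.} In the proof of Lemma \ref{lem:domain:linearity}, the domain of linearity containing $Q$ is exactly $D=\{x\in P\mid g(x)=\psi_Q(x)\}$. Both $g$ and $\psi_Q$ are homogeneous, so the set $\{u\in K(P)\mid g(u)=\psi_Q(u)\}$ equals $K(D)$. For $v_i\neq 0$, rescale $v_i$ into $P$; a zero vector lies in every cone. It follows that every $v_i$ lies in $K(D)$.

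The main subtle point is Step 3. One must check that the domain $D'$ built in the proof of Lemma \ref{lem:domain:linearity} from the chosen $Q$ really is a domain of linearity. This holds because that proof shows $D'$ is a domain on which $g$ is linear. Any domain of linearity containing $Q$ has linear map $\psi_Q$, so it is contained in $D'$; by maximality it equals $D'$. Such a maximal domain exists because there are finitely many unions of polytopes in $\mathcal Q$.

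The other point requiring care is the strictness argument in Step 2. It relies only on the fact that $\le$ is a total order compatible with addition.
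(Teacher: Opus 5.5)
Your proof is correct and is essentially the paper's argument. In both, one writes the relevant domain of linearity as $D=\{x\in P\mid g(x)=\psi_Q(x)\}$, uses $g(v_i)\le\psi_Q(v_i)$ from Proposition~\ref{prop:convex:up:=:minimum} together with additivity of $\psi_Q$ to force $g(v_i)=\psi_Q(v_i)$ for all $i$, and concludes $v_i\in K(D)$. The only difference is the starting point: the paper begins with a domain of linearity containing $v_1+\cdots+v_m$ and extracts $Q$, whereas you begin with $Q$ and construct the domain, justifying that step slightly more carefully.
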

\begin{proof}
The ``if'' part is clear. To prove the ``only if'' part, let $D$ be a domain of linearity such that $v_1 + \cdots + v_m\in K(D)$. We claim that $v_1,\ldots,v_m\in K(D)$.

By Lemma \ref{lem:domain:linearity}, there exists $Q\in \mathcal Q$ such that $D$ is the set of all $v\in P$ such that $g(v) = \psi_Q(v)$. Hence $g(v_1 + \cdots + v_m) = \psi_Q(v_1 + \cdots + v_m) = \psi_Q(v_1) + \cdots + \psi_Q(v_m)$. For each $i=1,\ldots,m$, we have $g(v_i)\leq \psi_Q(v_i)$ by Proposition \ref{prop:convex:up:=:minimum}, so the equality $g(v_1 + \cdots + v_m) = g(v_1) + \dots + g(v_m)$ implies $g(v_i) = \psi_Q(v_i)$ for each $i=1,\ldots,m$. Hence $v_i\in K(D)$ for each $i=1,\ldots,m$.
\end{proof}


\subsection{The GKZ-cones}

Let $\mathcal Q=(Q_i,A_i)_{i\in I}$ be a polytopal subdivision of the marked polytope $(P, A)$ with $A=\{v_1,\ldots,v_r\}$ and $\Phi$ be a matrix in $\Psi\in\MNaR$. For $\ell=1,\ldots,r$ we denote by $\Psi(v_\ell)\in\mathbb{R}^N$ the $\ell$--th column of $\Psi$. 

\begin{definition}\label{def:gkz:close:cone}
    The \emph{(closed) GKZ-cone} $\overline{C(\mcQ,N)}\subseteq \MNaR$ is defined as the set of matrices $\Psi$ for which there exists a map $g:K(P)\to\R^N$ such that
    \begin{itemize}
        \item[(1)] $g$ is a $\mcQ$--piecewise-linear and convex upward;
        \item[(2)] for all $v\in A_\mcQ$ we have $g(v) = \Psi(v)$;
        \item[(3)] for all $v\in A\setminus A_\mcQ$ we have $g(v) \geq \Psi(v)$. 
    \end{itemize}
\end{definition}

The map $g$ in the definition is in fact unique.

\begin{proposition}\label{prp:unique:Q-linear}
Let $\Psi\in\overline{C(\mcQ,N)}$. The associated map $g$ in the previous definition is unique. 
We denote it by $g_{\Psi,\mcQ}$.
\end{proposition}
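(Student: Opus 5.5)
The uniqueness should follow from the linear-algebra remark made just before Proposition~\ref{prop:convex:up:=:minimum}: a $\mcQ$-piecewise-linear map is determined by the linear maps $\psi_Q$, $Q\in\mcQ$, and each $\psi_Q$ is determined by its values on any spanning set of $\R^n$. So suppose $g$ and $g'$ both satisfy (1)--(3) of Definition~\ref{def:gkz:close:cone} for the same $\Psi$. The plan is to show that for every $Q\in\mcQ$ the linear maps $\psi_Q$ and $\psi'_Q$, the linear extensions of $g|_{K(Q)}$ and $g'|_{K(Q)}$, coincide. Then $g$ and $g'$ agree on every cone $K(Q)$. Since $P=\bigcup_{Q\in\mcQ}Q$, we have $K(P)=\bigcup_{Q} K(Q)$, and this gives $g=g'$.

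To compare $\psi_Q$ and $\psi'_Q$, fix $Q$ with marking $A_Q\subseteq A_\mcQ\cap Q$. For each $v\in A_Q$, condition (2) gives $g(v)=\Psi(v)=g'(v)$. Since $v\in Q\subseteq K(Q)$, this reads $\psi_Q(v)=\psi'_Q(v)$. As noted before Proposition~\ref{prop:convex:up:=:minimum}, the set $A_Q$, viewed in $\R^n$ via $u\mapsto(1,u)$, spans $\R^n$. This is because $Q$ is $(n-1)$-dimensional with its vertices in $A_Q$, so $A_Q$ affinely spans the hyperplane $\{x_1=1\}$, which in turn linearly spans $\R^n$. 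Two linear maps $\R^n\to\R^N$ agreeing on a spanning set are equal, so $\psi_Q=\psi'_Q$.

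The one point needing care is that the vertices of $Q$ actually belong to $A_Q$, so that $A_Q$ spans. This is implicit in the notion of a marked polytope: the text after Lemma~\ref{lem:domain:linearity} speaks of polytopes with vertices in $A_\mcQ$, and the paper asserts that $A_Q$ spans before Proposition~\ref{prop:convex:up:=:minimum}. I would simply invoke that assertion. Conditions (1) (convexity) and (3) are not needed for uniqueness at all; they only constrain existence.
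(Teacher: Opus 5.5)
Your proposal is correct and follows essentially the same approach as the paper. The paper notes that $Q=\mathrm{conv}(A_Q)$ and that $g$ is linear on $K(Q)$, so $g$ on $Q$ is fixed by the values $\Psi(v)$, $v\in A_Q$, and the polytopes of $\mathcal Q$ cover $P$. You make the same argument through the linear extensions $\psi_Q$, which agree on the spanning set $A_Q$, and both versions rely on the same implicit fact that the vertices of $Q$ lie in $A_Q$.
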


\begin{proof} 
Let $g:K(P)\to\R^N$ be as in Definition \ref{def:gkz:close:cone}. Each $Q\in\mcQ$ is a marked polytope, in particular $Q$ is the convex hull of $A_Q\subseteq A_\mcQ$. Since $g$ is linear on each $Q\in\mcQ$, its value at each point of $Q$ is uniquely determined by the values $g(v) = \Psi(v)$, 
$v\in A_Q$. But the polytopes in $\mcQ$ cover $P$, hence $g$ is unique.
\end{proof}
\begin{remark}\rm
It is easy to check that for $\lambda\in\mathbb R_{\ge 0}$
and $\Psi,\Phi\in \overline{C(\mcQ,N)}$ one has: $\lambda g_{\Psi,\mcQ}=g_{\lambda\Psi,\mcQ}$
and $g_{\Psi,\mcQ}+g_{\Phi,\mcQ}=g_{\Psi+\Phi,\mcQ}$, so $ \overline{C(\mcQ,N)}$ is indeed a cone.
\end{remark}
The following definition is justified by Lemma \ref{lem:domain:linearity}.

\begin{definition}\label{def:subdivision:matrix}
For $\Psi\in\overline{C(\mcQ,N)}$, we denote by $\mcD_{\Psi,\mcQ}=(D_{j},A_{j})_{j\in J}$ the 
    polytopal subdivision of $(P,A)$ where
    \begin{itemize}
    \item[-] the collection $(D_j)_{j\in J}$ of polytopes  is given by the domains of linearity of $g_{\Psi,\mcQ}$; 
    \item[-] the marking $A_j$ of $D_j$, $j\in J$, is defined by: $A_j=\{ v\in A\cap D_j\mid g_{\Psi,\mcQ}(v) = \Psi(v)\}$.
    \end{itemize}
\end{definition}

The following lemma follows immediately from the definitions and Proposition \ref{prp:unique:Q-linear}.

\begin{lemma}\label{lem:Q:refine}
    \begin{itemize}
        \item[\rm (i)] If $\mcQ$ refines $\mcQ'$, then $\overline{C(\mcQ',N)}\subseteq\overline{C(\mcQ,N)}$.
        \item[\rm (ii)] If $\Psi\in\overline{C(\mcQ,N)}$, then $\mcQ$ refines $\mcD_{\Psi,\mcQ}$.
        \item[\rm (iii)] If $\Psi\in\overline{C(\mcQ',N)}$ and $\mcQ$ refines $\mcQ'$, then $g_{\Psi,\mcQ'} = g_{\Psi,\mcQ}$.
    \end{itemize}
\end{lemma}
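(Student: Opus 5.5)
The three parts all come down to one observation: a $\mathcal Q'$-piecewise-linear map is automatically $\mathcal Q$-piecewise-linear whenever $\mathcal Q$ refines $\mathcal Q'$. Combined with the uniqueness in Proposition~\ref{prp:unique:Q-linear}, this handles everything. The order is (i), then (iii), then (ii).

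\emph{Part (i).} Let $\Psi\in\overline{C(\mcQ',N)}$ with witnessing map $g=g_{\Psi,\mcQ'}$.
\begin{itemize}
\item Every $Q\in\mcQ$ lies in some $Q'_j\in\mcQ'$, because the cells of $\mcQ$ inside the $Q'_j$ subdivide them. So $g$ is linear on $K(Q)$, and $g$ is $\mcQ$-piecewise-linear.
\item Convexity upward is a property of $g$ alone, so it transfers unchanged.
\item The refinement condition makes each marked cell $(Q'_j,A'_j)$ subdivided by marked polytopes $(Q_i,A_i)$ with $Q_i\subseteq Q'_j$. Hence $A'_j=\bigcup_{Q_i\subseteq Q'_j}A_i$, and taking the union over $j$ gives $A_{\mcQ}=A_{\mcQ'}$.
\end{itemize}
With $A_{\mcQ}=A_{\mcQ'}$, conditions (2) and (3) of Definition~\ref{def:gkz:close:cone} read the same for $\mcQ$ as for $\mcQ'$. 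So $g$ witnesses $\Psi\in\overline{C(\mcQ,N)}$.

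The step needing the most care is the equality of vertex sets. It depends on reading ``forms a polytopal subdivision of $(Q'_j,A'_j)$ by marked polytopes'' as forcing the markings $A_i$ of the sub-cells to lie in $A'_j$ and to cover it. If only the inclusion $A_\mcQ\subseteq A_{\mcQ'}$ were available, condition (3) for points of $A_{\mcQ'}\setminus A_\mcQ$ would still hold, since there equality $g(v)=\Psi(v)$ holds. Condition (2) would be the only thing requiring the reverse inclusion. So I will verify it from Definition~\ref{def:polytopal:subdivision} applied to the subdivision of $(Q'_j,A'_j)$, whose markings are by definition subsets of $A'_j$ exhausting it.

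\emph{Part (iii).} The argument for (i) shows that $g_{\Psi,\mcQ'}$ satisfies the defining conditions for $\mcQ$. By uniqueness (Proposition~\ref{prp:unique:Q-linear}), it therefore equals $g_{\Psi,\mcQ}$.

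\emph{Part (ii).} Put $g=g_{\Psi,\mcQ}$. By Lemma~\ref{lem:domain:linearity}, each domain of linearity $D_j$ is a polytope that is a union of polytopes of $\mcQ$. I will check three things.
\begin{itemize}
\item \emph{Each $Q\in\mcQ$ lies in some domain.} Since $g$ is linear on $K(Q)$, $Q$ is contained in a maximal domain $D_j$. Because the $Q$'s cover $P$, the cells of $\mcQ$ inside $D_j$ cover $D_j$.
\item \emph{Face conditions.} Distinct domains meet in faces: $D_j=\{x\in P: g(x)=\psi_{Q}(x)\}$ for a suitable $Q$, and $g$ is the lexicographic minimum of the $\psi_Q$ (Proposition~\ref{prop:convex:up:=:minimum}). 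The face conditions inside $D_j$ are inherited from those of $\mcQ$.
\item \emph{Markings.} For $Q\subseteq D_j$, every $v\in A_Q$ satisfies $g(v)=\Psi(v)$ by condition (2), so $A_Q\subseteq A_j$. Conversely, if $v\in A_j$ then $g(v)=\Psi(v)$ and $v\in D_j$.
\end{itemize}
For $\mcQ$ to refine $\mcD_{\Psi,\mcQ}$, the markings of the sub-cells must be compatible with $A_j$. This holds because both markings are cut out by the same equality $g(v)=\Psi(v)$, restricted to the relevant cell.
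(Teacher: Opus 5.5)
Your strategy is the one the paper has in mind when it says the lemma follows immediately from the definitions and Proposition~\ref{prp:unique:Q-linear}. A $\mcQ'$-piecewise-linear convex upward map is $\mcQ$-piecewise-linear, uniqueness gives (iii), and Lemma~\ref{lem:domain:linearity} handles (ii). However, the step you single out as needing the most care is false, and your analysis of it runs in the wrong direction.

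Refinement does not give $A_\mcQ=A_{\mcQ'}$. A subdivision of $(Q'_j,A'_j)$ only requires each marking $A_i$ to lie in $A'_j\cap Q_i$, and the paper explicitly says markings need not exhaust $A$. Concretely, in Example~\ref{example:simplex:r2} the subdivision $\mathcal Q_1=(P,A')$ refines $\mathcal Q_0=(P,A)$, although $A_{\mathcal Q_1}=A'\subsetneq A=A_{\mathcal Q_0}$. So the verification you announce cannot succeed.

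It is also unnecessary. Refinement does give the inclusion $A_\mcQ\subseteq A_{\mcQ'}$, and that alone yields both conditions of Definition~\ref{def:gkz:close:cone}:
\begin{itemize}
\item Condition (2) for $\mcQ$ is just condition (2) for $\mcQ'$ restricted to the smaller set $A_\mcQ$. So, contrary to what you wrote, it does not need the reverse inclusion.
\item Condition (3) for $\mcQ$ holds on $A\setminus A_{\mcQ'}$ by condition (3) for $\mcQ'$, and on $A_{\mcQ'}\setminus A_\mcQ$ it holds with equality.
\end{itemize}

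The same misreading resurfaces at the end of (ii). The markings $A_Q$ are prescribed data, not the set cut out by $g(v)=\Psi(v)$. In the same example, let $\Psi$ have as its column at $(1,1)$ the average of its columns at the three vertices. Then $\Psi\in\overline{C(\mathcal Q_1,N)}$ and $g_{\Psi,\mathcal Q_1}$ is linear. Hence $\mcD_{\Psi,\mathcal Q_1}=(P,A)$, whose marking strictly contains $A'$, so the markings are not ``cut out by the same equality''.

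Refinement only asks for $A_Q\subseteq A_j\cap Q$ whenever $Q\subseteq D_j$, and your ``Markings'' bullet already proves this. You should delete the ``conversely'' sentence and the final compatibility claim. One smaller point: that $D_j$ is covered by the cells of $\mcQ$ it contains comes from Lemma~\ref{lem:domain:linearity}, not from the cells covering $P$. With these corrections the argument is complete.
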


Next we study a decomposition of a GKZ-cone.

\begin{definition}\label{def:gkz:open:cone}
    The \emph{(open) GKZ-cone} $C(\mcQ,N)\subseteq \overline{C(\mcQ,N)}$ consists of $\Psi\in\overline{C(\mcQ,N)}$ such that $\mcD_{\Psi,\mcQ} = \mcQ$. 
\end{definition}

In other words, $\Psi\in \overline{C(\mcQ,N)}$ is contained in $C(\mcQ,N)$ if and only if 
    \begin{itemize}
        \item[(1)] the domains of linearity of $g_{\Psi,\mcQ}$ are exactly the polytopes in $\mcQ$;
        \item[(2)] for all $v\in A\setminus A_\mcQ$ we have $g_{\Psi,\mcQ}(v) > \Psi(v)$.
    \end{itemize}

In the following we write $\mcQ'>\mcQ$ if $\mcQ$ refines $\mcQ'$ but $\mcQ\not =\mcQ'$.

\begin{proposition}\label{prp:gkz:open:closed}
We have:
    \[
    C(\mcQ,N) \,\, = \,\, \overline{C(\mcQ,N)}\,\,\setminus\,\,\bigcup_{\mcQ'>\mcQ}\overline{C(\mcQ',N)}.
    \]
\end{proposition}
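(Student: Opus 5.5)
We prove the two inclusions separately, working throughout with the subdivision $\mcD_{\Psi,\mcQ}$ of Definition \ref{def:subdivision:matrix} and with Lemma \ref{lem:Q:refine}.

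For ``$\supseteq$'', take $\Psi\in\overline{C(\mcQ,N)}$ with $\Psi\notin C(\mcQ,N)$, so $\mcD:=\mcD_{\Psi,\mcQ}\neq\mcQ$. By Lemma \ref{lem:Q:refine}(ii), $\mcQ$ refines $\mcD$, hence $\mcD>\mcQ$. It remains to check that $\Psi\in\overline{C(\mcD,N)}$, with $g:=g_{\Psi,\mcQ}$ as witness map.
\begin{itemize}
\item[(1)] $g$ is linear on the cone over each domain of linearity, so $g$ is $\mcD$-piecewise-linear, and it is still convex upward.
\item[(2)] By construction of the markings, $g(v)=\Psi(v)$ for all $v\in A_\mcD$.
\item[(3)] For $v\in A\setminus A_\mcD$, either $v\in A\setminus A_\mcQ$, where $g(v)\ge\Psi(v)$ holds by hypothesis, or $v\in A_\mcQ$, where $g(v)=\Psi(v)$. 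In both cases $g(v)\geq\Psi(v)$.
\end{itemize}
Hence $\Psi$ lies in the union on the right-hand side. This also implicitly uses that $\mcD$ is a genuine polytopal subdivision, which Lemma \ref{lem:domain:linearity} provides.

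For ``$\subseteq$'', let $\Psi\in C(\mcQ,N)$ and suppose $\Psi\in\overline{C(\mcQ',N)}$ for some $\mcQ'>\mcQ$. By Lemma \ref{lem:Q:refine}(iii), $g_{\Psi,\mcQ'}=g_{\Psi,\mcQ}=:g$. Since $g$ is $\mcQ'$-piecewise-linear, each $Q'\in\mcQ'$ has $g$ linear on $K(Q')$, so $Q'$ lies inside some domain of linearity of $g$. These domains are exactly the polytopes of $\mcQ$, because $\mcD_{\Psi,\mcQ}=\mcQ$.

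Conversely, $\mcQ$ refines $\mcQ'$, so every $Q\in\mcQ$ lies in some $Q'\in\mcQ'$. That $Q'$ lies in some $\tilde Q\in\mcQ$, and $Q\subseteq\tilde Q$ with both full-dimensional cells of the same complex forces $Q=\tilde Q$. So $Q=Q'$, and the underlying polytopes of $\mcQ'$ and $\mcQ$ coincide.

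Next compare the markings; this is the step needing care. For $Q=Q'$ the refinement condition says that $(Q,A_Q)$ alone subdivides $(Q',A'_{Q'})$, so the markings are equal. Alternatively one can argue through condition (2) of the open cone. For $v\in A_{Q'}$ we have $g(v)=\Psi(v)$ by the definition of $\overline{C(\mcQ',N)}$. If $v\notin A_\mcQ$, this contradicts $g(v)>\Psi(v)$. If instead $v\in A_\mcQ\cap Q$ but $v\notin A_Q$, then $v$ lies in the marking of another cell containing it, and the compatibility condition of Definition \ref{def:polytopal:subdivision} forces $v\in A_Q$.

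Hence $A'_{Q'}\subseteq A_Q$. The reverse inclusion comes from the refinement condition. Therefore $\mcQ'=\mcQ$, contradicting $\mcQ'>\mcQ$.

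I expect the main obstacle to be this identification of markings, i.e.\ making precise what ``$\mcQ$ refines $\mcQ'$'' forces on the markings when the cells coincide.
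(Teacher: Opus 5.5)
Your proof is correct and follows the paper's route. The paper likewise works with $\mcD_{\Psi,\mcQ}$ and Lemma \ref{lem:Q:refine}(ii),(iii), but it leaves implicit the membership $\Psi\in\overline{C(\mcD_{\Psi,\mcQ},N)}$, which you verify. It closes the inclusion $C(\mcQ,N)\subseteq\ldots$ by antisymmetry of refinement ($\mcQ'$ refines $\mcD_{\Psi,\mcQ'}=\mcQ$ and $\mcQ$ refines $\mcQ'$), which you unpack cell by cell.

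One correction: your remark that the refinement condition alone makes the markings equal is false. Refinement only gives $A_Q\subseteq A'_{Q'}$; for example, $\mcQ_1=(P,A')$ refines $\mcQ_0=(P,A)$ in Example \ref{example:simplex:r2} with the same polytope and a strictly smaller marking. So your \emph{alternative} argument is the one actually needed for $A'_{Q'}\subseteq A_Q$. That argument uses $g(v)>\Psi(v)$ off $A_\mcQ$, or more directly $A_Q=\{v\in A\cap Q\mid g(v)=\Psi(v)\}$ since $\mcD_{\Psi,\mcQ}=\mcQ$.
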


\begin{proof}
If $\Psi$ is a matrix in the open GKZ-cone $C(\mcQ,N)$, then $\Psi\in\overline{C(\mcQ,N)}$ by definition. Suppose $\mcQ$ refines $\mcQ'$. If $\Psi\in\overline{C(\mcQ',N)}$ then $g_{\Psi,\mcQ'} = g_{\Psi,\mcQ}$ by Lemma \ref{lem:Q:refine} \emph{(iii)}. But $\mcD_{\Psi,\mcQ} = \mcQ$, by the definition of open GKZ-cone. Hence, by Lemma \ref{lem:Q:refine} \emph{(ii)}, $\mcQ'$ refines $\mcD_{\Psi,\mcQ'} = \mcD_{\Psi,\mcQ} = \mcQ$. So $\mcQ' = \mcQ$ and we have proved that $\Psi\not\in\overline{C(\mcQ',N)}$ for any $\mcQ'>\mcQ$.

On the other hand, let $\Psi\in\overline{C(\mcQ,N)}$ be such that $\Psi\notin\overline{C(\mcQ',N)}$ for any $\mcQ'>\mcQ$. 
By Lemma~\ref{lem:Q:refine} \emph{(ii)},  $\Psi\in\overline{C(\mcQ,N)}$ implies $\mcQ$ refines $\mcQ':=\mcD_{\Psi,\mcQ}$. 
Since $\Psi\notin\overline{C(\mcQ',N)}$ if $\mcQ'>\mcQ$, we must have 
$\mcQ' = \mcQ$. So $\Psi\in C(\mcQ,N)$ since $\mcD_{\Psi,\mcQ} = \mcQ$.
\end{proof}

We will need the following observation in the proof of the next Theorem:

\begin{lemma}\label{lemma:max:in:polytope}
Let  $R\subseteq \mathbb R^{N}$ be a polytope. There exists a unique point 
$\mathbf p_R\in R$ such that $\mathbf p_R>\mathbf p'$ (with respect to the lexicographic order) 
for all $\mathbf p'\in R\setminus\{\mathbf p_R\}$. Moreover, 
$\mathbf p_R$ is a vertex of the polytope.
\end{lemma}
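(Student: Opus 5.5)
We prove Lemma~\ref{lemma:max:in:polytope} by successively maximizing the coordinates, from the most significant one down. Write points of $\R^N$ as $\mathbf p=(p_N,\ldots,p_1)^T$. Since $R$ is a compact polytope, the linear functional $\mathbf p\mapsto p_N$ attains a maximum $m_N$ on $R$, and $R_N:=\{\mathbf p\in R\mid p_N=m_N\}$ is a non-empty face of $R$. Inductively, for $j=N-1,\ldots,1$, we let $m_j$ be the maximum of $p_j$ on $R_{j+1}$ and set $R_j:=\{\mathbf p\in R_{j+1}\mid p_j=m_j\}$. Each $R_j$ is again a non-empty face of $R_{j+1}$, because a maximizing set of a linear functional on a polytope is a face. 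Since a face of a face is a face, $R_1$ is a face of $R$.

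Next we show that $R_1$ is a single point, namely $\mathbf p_R:=(m_N,\ldots,m_1)^T$. Every $\mathbf p\in R_1$ satisfies $p_j=m_j$ for all $j$, so all of its coordinates are fixed; and $R_1$ is non-empty. A face of $R$ consisting of one point is a vertex, so $\mathbf p_R$ is a vertex of $R$.

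For the strict maximality, let $\mathbf p'\in R\setminus\{\mathbf p_R\}$ and let $j$ be the largest index with $p'_j\neq m_j$; such a $j$ exists because $\mathbf p'\ne\mathbf p_R$. Then $p'_i=m_i$ for all $i>j$, so $\mathbf p'\in R_{j+1}$ (with the convention $R_{N+1}=R$). By the choice of $m_j$ we get $p'_j\le m_j$, hence $p'_j<m_j$. By the definition of the lexicographic order this means $\mathbf p'<\mathbf p_R$.

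Uniqueness follows directly: two distinct points, each strictly greater than every other point of $R$, would each be greater than the other, which is impossible. No step is a real obstacle. The only point needing care is that the top coordinate $a_N$ is the most significant in the paper's convention, so the induction has to start at $N$.
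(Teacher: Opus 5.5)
Your proof is correct and follows the paper's argument. The paper maximizes the top coordinate $x_N$ to get a face $F$ and then applies induction on $N$ inside the hyperplane $x_N=\mathfrak m_N$; your chain of faces $R_N\supseteq\cdots\supseteq R_1$ is that induction unrolled, and you additionally verify the strict maximality and uniqueness that the paper only asserts ``by construction.''
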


We call $\mathbf p_R$ the maximal element of $R$ and write $\mathbf p_R=\max\{\mathbf p'\mid \mathbf p'\in R\}$.

\begin{proof}
The claim holds if $R$ is a point or $R$ is an interval in $\mathbb R$. Suppose now
$\dim R> 1$. Let $x_N:\mathbb R^N\rightarrow \mathbb R$ be the projection onto the first coordinate.
Since $R$ is compact and $x_N$ is continuous, the restriction $x_N\vert_R$ attains its maximum $\mathfrak m_N$ 
in $R$, and $R$ is contained in the half space $\{\mathbf u\in \mathbb R^N\mid x_N(\mathbf u)\le \mathfrak m_N\}$.
It follows that the set of points in $R$ where $x_N\vert_R$ attains its maximum is a face $F$ of $R$.

The  polytope $F$ lives in $\{\mathbf u\in \mathbb R^N\mid x_N(\mathbf u)=\mathfrak m_N\}\cong\mathbb R^{N-1}$ as affine spaces, with $\dim F\le \dim R$ so proceeding by induction over $N$, $F$ has a unique maximal point $\mathbf  p:=\max\{\mathbf  p'\mid \mathbf p'\in F\}$, which is a vertex of $F$. By construction, $\mathbf  p$ is also maximal for $R$, and a vertex of the face $F$ is also a vertex of the polytope $R$.
\end{proof}

As the main result of this section, we show that the (open) GKZ-cones cover $M_{N,r}(\mathbb{R})$.

\begin{theorem}\label{thm:gkz:complete}
For each $\Psi\in \MNaR$ there exists a polytopal subdivision $\mcQ$ of $(P,A)$ such that $\Psi\in C(\mcQ,N)$.
\end{theorem}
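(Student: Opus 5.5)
The idea is to construct directly, for a given $\Psi$, the lexicographic ``upper hull'' of the lifted points $(v_\ell,\Psi(v_\ell))\in\R^n\times\R^N$. Then $\mcQ$ will be the subdivision into its domains of linearity, marked by the points lying on the hull.

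\textbf{Step 1: the upper hull.} For $u\in K(P)$ set
\[
g_\Psi(u):=\max\Bigl\{\textstyle\sum_{\ell=1}^r\lambda_\ell\Psi(v_\ell)\ \Bigm|\ \lambda\in\R^r_{\geq 0},\ \sum_\ell\lambda_\ell v_\ell=u\Bigr\},
\]
with the maximum taken in the lexicographic order. The set of admissible $\lambda$ is a polytope, because the first coordinate of each $v_\ell\in\R^n$ equals $1$. Its image under the linear map $\lambda\mapsto\Psi\cdot\lambda$ is a polytope $R_u\subseteq\R^N$. By Lemma~\ref{lemma:max:in:polytope}, $R_u$ has a unique lexicographic maximum, so $g_\Psi$ is well defined. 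Three properties follow directly:
\begin{itemize}
\item $g_\Psi$ is homogeneous;
\item $g_\Psi(u+u')\geq g_\Psi(u)+g_\Psi(u')$, since the admissible sets add and the lexicographic order is compatible with addition;
\item $g_\Psi(v_\ell)\geq\Psi(v_\ell)$ for every $\ell$.
\end{itemize}

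\textbf{Step 2: piecewise linearity.} This is the main obstacle. I would show it by induction on $N$, mirroring the proof of Lemma~\ref{lemma:max:in:polytope}. Write $\Psi$ with top row $\uw_N^T$. For $N=1$, $g_\Psi$ is the classical upper envelope, which is piecewise linear on a regular subdivision $\mcQ_N$ of $P$ into polytopes with vertices in $A$.

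For the inductive step, the lexicographic maximum is attained in the face of $R_u$ maximizing the top coordinate. So the $\lambda$'s that achieve the top-coordinate maximum are exactly those supported on the markings of the cells of $\mcQ_N$ containing $u$. On each cell $(Q,A_Q)$ of $\mcQ_N$, the lower $N-1$ rows of $g_\Psi$ are therefore the upper envelope of the restricted configuration $A_Q$ with the matrix of lower rows of $\Psi$. By induction, this envelope is piecewise linear on a subdivision of $(Q,A_Q)$.

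One must check that these subdivisions glue across common faces $Q\cap Q'$. This holds because the restriction of an upper envelope to a face is the upper envelope of the points on that face, and the markings agree there by the definition of $\mcQ_N$. The result is a common refinement $\mcQ'$ with $g_\Psi$ $\mcQ'$-piecewise-linear. By Remark~\ref{convex:equivalent}, $g_\Psi$ is then convex upward.

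\textbf{Step 3: defining $\mcQ$.} Let $\mcQ$ have as polytopes the domains of linearity of $g_\Psi$; these are polytopes unions of cells of $\mcQ'$ by Lemma~\ref{lem:domain:linearity}. Mark each domain $D$ by $A_D=\{v\in A\cap D\mid g_\Psi(v)=\Psi(v)\}$.

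\textbf{Step 4: checking $\Psi\in C(\mcQ,N)$.} First I must show that this is a polytopal subdivision. The key point is that every vertex $x$ of $D$ lies in $A_D$. Any admissible $\lambda$ for $x$ with $\Psi\cdot\lambda=g_\Psi(x)$ can be split via Corollary~\ref{cor:linearity:only:in:domain}, which forces the $v_\ell$ with $\lambda_\ell>0$ into $K(D)$. Since $x$ is a vertex of $D$, $\lambda$ is then supported on multiples of $x$ itself, so $x\in A$ and $g_\Psi(x)=\Psi(x)$.

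The same argument applied to arbitrary $u$ shows that $g_\Psi$ is linear on each $K(D)$ with values determined by $A_D$. The markings agree on intersections because the condition defining $A_D$ does not depend on $D$.

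Next I verify the conditions of Definition~\ref{def:gkz:close:cone}. Condition (1) holds by Step 2 and the construction of $\mcQ$; condition (2) holds by the definition of the markings. For condition (3), note $A_\mcQ=\{v\in A\mid g_\Psi(v)=\Psi(v)\}$ and use $g_\Psi(v_\ell)\geq\Psi(v_\ell)$; this also gives the strict inequality outside $A_\mcQ$. Since the domains of linearity of $g_{\Psi,\mcQ}=g_\Psi$ are exactly the polytopes of $\mcQ$ (Proposition~\ref{prp:unique:Q-linear}), we get $\mcD_{\Psi,\mcQ}=\mcQ$, hence $\Psi\in C(\mcQ,N)$.
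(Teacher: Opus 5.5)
Your proposal is correct, and it reaches the same function as the paper by a different route. Every $v_\ell$ has first coordinate $1$, so for $u=t(1,w)$ your $g_\Psi(u)$ is just $t\cdot\max\Omega_w$, where $\Omega_\Psi$ is the convex hull of the lifted points $(v,\Psi(v))$ used in the paper.

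The paper then works directly with $\Omega_\Psi\subseteq\R^{n-1}\oplus\R^N$:
\begin{itemize}
\item it introduces upper faces, on which $\mathbf u=g_\Psi(w)$ throughout;
\item it proves that a face is upper as soon as one graph point lies in its relative interior, so the graph of $g_\Psi$ is the union of upper faces;
\item it reads off $\mcQ$ as the projections of the maximal upper faces.
\end{itemize}
There the vertex markings come for free, since vertices of upper faces are vertices of $\Omega_\Psi$, i.e.\ lifted points of $A$.

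You instead prove piecewise linearity by induction on $N$. You exhibit the subdivision as an iterated refinement: first the classical regular subdivision for the top row, then the upper envelope of the remaining rows on each marked cell. You then recover $\mcQ$ as the domains of linearity via Lemma~\ref{lem:domain:linearity} and Corollary~\ref{cor:linearity:only:in:domain}. An explicit extreme-point argument gives you the vertex markings and the strict inequality off $A_\mcQ$.

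What each route buys:
\begin{itemize}
\item Yours reduces everything to the classical $N=1$ theory. It shows that $\mcQ$ refines the classical regular subdivision of the top row, with the lexicographic order acting as tie-breaking. Your final verification is also more explicit than the paper's ``by construction''.
\item The paper's is a single geometric argument, with no induction and no gluing.
\end{itemize}

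Three points in your route should be stated carefully:
\begin{itemize}
\item The markings of $\mcQ_N$ must be all points lifted onto an upper facet, not only the vertices; otherwise the tie-breaking claim fails.
\item The subdivisions of the cells $(Q,A_Q)$ produced by the induction must be the canonical iterated ones, which commute with restriction to faces. An arbitrary subdivision on which the envelope is piecewise linear need not glue.
\item You should record that two domains of linearity $D,D'$ meet in a common face. This holds because $D\cap D'=\{x\in D\mid \varphi_{D'}(x)-\varphi_D(x)=\mathbf 0\}$, where $\varphi_{D'}-\varphi_D\ge\mathbf 0$ lexicographically on $D$, and the zero set of a lexicographically nonnegative affine map on a polytope is a face.
\end{itemize}
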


\begin{proof}
In the following we construct a piecewise linear map $g_\Psi:P\rightarrow \mathbb R^N$ with the desired properties. The corresponding piecewise linear map $g_\Psi:K(P)\rightarrow \mathbb R^N$ is obtained via linear extension.

For $\Psi\in M_{N,r}(\mathbb R)$ let $\Omega_\Psi\subseteq \mathbb R^{n-1}\oplus \mathbb R^N$ be the polytope obtained as the convex hull:
\begin{equation}\label{equation:Omega:Psi}
\Omega_\Psi:=\textrm{convex hull of\ }\{(  v,\Psi(  v))\mid   v\in A\}\subseteq \mathbb R^{n-1}\oplus \mathbb R^N.
\end{equation}
Let $p_1: \mathbb R^{n-1}\oplus \mathbb R^N\rightarrow \mathbb R^{n-1}$ be the canonical projection onto the first component. It is an affine map such that $p_1(\Omega_\Psi)=P$. For $w\in P$, the fibre of the restricted projection 
\begin{equation}\label{equation:fibre:Omega:w}
\Omega_{ {w}}:=(p_1\vert_{\Omega_\Psi})^{-1}(w) ,
\end{equation}
can be identified with a polytope in $\mathbb R^N$. By Lemma~\ref{lemma:max:in:polytope},  the map
\begin{equation}\label{definition:gPsi}
g_{\Psi}: P\rightarrow \mathbb R^N,\quad   w\mapsto \max \Omega_{ w}
\end{equation}
is well-defined.

Let $F$ be a face of the polytope $\Omega_\Psi$ and denote by $F^o$ its relative interior. We call $F$ an \emph{upper face} if for all $(w,\mathbf u)\in F$ holds: $\mathbf u=g_\Psi(  w)$. 

\begin{claim}
A face $F$ of $\Omega_\Psi$ is an upper face if and only if there exists $w\in P$ such that $(w,g_{\Psi}(w))\in F^o$.
\end{claim}

\begin{proof}
If $F$ is a point, then the conditions are clearly equivalent. Suppose $\dim F\ge 1$. Note that $\dim p_1(F)\ge 1$ because the vertices of $F$ are of the form $(w_1,\Psi(w_1))$, $\ldots$, $(w_s,\Psi(w_s))$ for some $w_1,\ldots, w_s\in A$, where $s\ge 2$.

If $F$ is an upper face, then by definition, $\mathbf u=g_{\Psi}(w)$ for all $( w,\mathbf u)\in F$. 

On the other hand, let $(  w,g_{\Psi}(  w))$ be a point in the relative interior $F^o\subset F$. We argue by contradiction. Suppose there exists a point $( y, \mathbf u)\in F$, $ y\not= w$, and an element 
$\mathbf u'\in \mathbb R^N$ such that $\mathbf u'>\mathbf u$ and $(  y, \mathbf u')\in \Omega_\Psi$. 
The ray starting in $(  y,\mathbf u)$ and passing through $(  w,g_{\Psi}( w))$ 
meets a proper face $F'\subset F$ of $F$. 
Let $( u_1,\mathbf u_1),\ldots,(  u_k,\mathbf u_k)$ be the vertices of $F'$. It follows that 
$(  w,g_{\Psi}(  w))$ is a convex linear combination of these vertices and $(  y,\mathbf u)$:
\[
(  w,g_{\Psi}(  w))= \lambda (  y,\mathbf u)+\lambda_1(  u_1,\mathbf u_1)+\ldots +\lambda_k(  u_k,\mathbf u_k),
\]
where $\lambda\not=0$. But note that $\lambda (  y,\mathbf u')+\lambda_1( u_1,\mathbf u_1)+\ldots +\lambda_k(  u_k,\mathbf u_k)=:(  w,\mathbf x)$ 
is an element in $\Omega_\Psi$ with $\mathbf x>g_{\Psi}(  w)$, which is not possible. It follows: $(  y,\mathbf u)
=(  y,g_{\Psi}(  y))$ for all $(  y,\mathbf u)\in F$, $  y\not=  w$. But now one can replace $(  w,g_{\Psi}(  w))$ 
by any other element $(  y,g_{\Psi}(  y))\in F^o$, $  y\not=  w$, and the same arguments show: for any
$(  w,\mathbf u)\in F$, $\mathbf u=g_{\Psi}(  w)$, \emph{i.e.} $F$ is an upper face.
\end{proof}

\par
\noindent
\textit{Continuation of the proof of Theorem~\ref{thm:gkz:complete}.}
Let $\mathcal F$ be the union of the upper faces. The restriction $\pi\vert_{\mathcal F}:\mathcal F\rightarrow P$ of the projection is injective by construction. For the surjectivity: take $w\in P$, the point $(w,g_{\Psi}( w))$ is contained in the boundary of the polytope $\Omega_\Psi$. Hence it is a point in the relative interior $F^o$ of a unique face $F$ of $\Omega_\Psi$. By the above claim, the face is hence an upper face, and 
$  w\in \mathrm{im\,}p_1$.

By definition, the face of an upper face is an upper face, so the images of the maximal dimensional 
upper faces $\{F_j\}_{j\in J}$ define a subdivision $\{Q_j\}_{j\in J}$ of $P$.
The restriction $g_\Psi\vert_{Q_j}$ is affine linear, it is the inverse map to the (bijective) projection 
$p_1\vert_{F_j}: F_j\rightarrow Q_j$. So $g_{\Psi}$ is piecewise linear, and, by construction, its domains of linearity are 
exactly the polytopes $Q_j$, $j\in J$. It remains to add the marking of the polytope: we set 
$A_j=\{  v\in A\cap Q_j\mid  g_\Psi( v)=\Psi({v})\}$. The set $A_j$ contains by construction all the vertices of $Q_j$. 
It follows that $\mathcal Q=(Q_j,A_j)_{j\in J}$ is a polytopal subdivision of $(P, A)$ such that $g_{\Psi}=g_{\Psi,\mathcal Q}$, 
the domains of linearity are exactly the polytopes in $\mathcal Q$, and it follows from the definition of each $A_j$ that $g_{\Psi,\mathcal Q}(v)>\Psi(v)$ 
for all $v\in A\setminus A_{\mathcal Q}$, so $\Psi\in C(\mathcal Q, N)$.
\end{proof}

\subsection{The GKZ-cones and the polar correspondence}

In this section, using polar correspondence, we prove that the GKZ-cones are $\mu$-polyhedral cones. To simplify notations, instead of $\mathbb{R}^r$, we will sometimes use $\mathbb{R}^A$, forgetting the index of elements in $A$.

Let $\{\ue_v\mid v\in A\}$ denote the canonical basis of $\R^A$. For a matrix $\Psi\in\MNaR$ and a vector 
$\uu = \sum_{v\in A}a_v\ue_v\in\R^A$, the multiplication of the matrix $\Psi$ and the vector $\uu$ can be written as $\Psi\cdot\uu = \sum_{v\in A}a_v\Psi(v)$.

Let $\bfDelta = \{w_1,\ldots,w_n\}\subseteq A$ be a subset such that $\bfDelta$ is a basis for $\mathbb R^n$. It is at the same time an affine basis for the affine hyperplane $\R^{n-1}\subseteq \R^n$ spanned by $P$, where the latter is embedded via $w\mapsto (1,w)$. We call $\bfDelta$ for short an \emph{affine basis}. 

Given a vector $v\in A$ we define the vector
$$\uu_{v,\bfDelta} := \ue_v - (a_1\ue_{w_1} + \cdots + a_n\ue_{w_n}) \in \R^A$$
where $v = a_1w_1 + \cdots + a_nw_n$ is the (unique) expression of $v$ as a linear combination of the affine basis $\bfDelta$.

\begin{definition}\label{def:condition:cone}
Let $\mcQ$ be a polytopal subdivision of $(P,A)$. The \emph{GKZ-condition-cone} $\mcC_\mcQ$ is the polyhedral cone in $\R^A$ generated by the vectors
$$\uu_{v,\bfDelta},\,-\uu_{v,\bfDelta}\quad\textrm{with }\bfDelta\subseteq A_Q\textrm{ an affine basis, }Q\in \mcQ,\,\, v\in A_Q$$
and by the vectors
$$\uu_{v,\bfDelta}\quad\textrm{with }\bfDelta\subseteq A_Q\text{ an affine basis, }Q\in \mcQ,\,\,  v\in A\setminus A_Q.$$
\end{definition}

\begin{example} 
We keep the notations in Example \ref{example:simplex:r2}:  set $A = \{\uv_1, \uv_2, \uv_3, \uv_4\}$ with $\uv_1 = (0,0)$, $\uv_2 = (3,0)$, $\uv_3 = (0,3)$ and $\uv_4 = (1,1)$. We abbreviate $\ue_i := \ue_{\uv_i}$ for $i=1,2,3,4$. Since $\uv_4 = \frac{1}{3}\uv_1 + \frac{1}{3}\uv_2 + \frac{1}{3}\uv_3$, we let $\uu := \ue_4 - \frac{1}{3}\ue_1 - \frac{1}{3}\ue_2 - \frac{1}{3}\ue_3\in\R^A$.

Consider the subdivsion $\mathcal Q_1 = (P,A')$ with $A' = \{\uv_1, \uv_2, \uv_3\}$. Since $A'$ is an affine basis, $\uu = \uu_{v_4, \bfDelta}$ is a generator of the GKZ-condition-cone $\mcC_{\mcQ_1}$, \emph{i.e.} $\mcC_{\mcQ_1} = \R_{\ge 0}\cdot \uu$.

For the polytopal subdivision $\mcQ_2$ the vertices of each simplex $Q_1,\,Q_2,\,Q_3$ form an affine basis. Assuming that $Q_i$ is the simplex not containing $v_i$, we have $\uu_{v_i,Q_i} = -3\uu$ for $i=1,2,3$. Hence $\mcC_{\mcQ_2} = -\R_{\geq 0}\cdot \uu = -\mcC_{\mcQ_1}$.

The GKZ-condition-cone $\mcC_{\mcQ_0}$ is generated by $\uu$ and $-\uu$, hence $\mcC_{\mcQ_0} = \R\cdot \uu$.

Note that $\mcQ_0$ is a common coface of $\mcQ_1$ and $\mcQ_2$ (see Proposition \ref{prp:refined:coface} below).
\end{example}

\noindent{\bf Key fact.} The GKZ-condition-cone $\mcC_\mcQ$ is \emph{independent} of $N$, it depends \emph{only} on the affine geometry of the polytopes in the subdivision $\mcQ$.
\smallskip

The set of generators for $\mcC_\mcQ$ given in the definition is far away from being minimal. A simple calculation shows:

\begin{lemma}\label{Lem:BaseChange}
Let $\bfDelta:=\{ w_1,\ldots,w_n\}\subseteq A_Q$ 
and $\bfDelta':=\{ w_1',\ldots,  w_n'\}\subseteq A_Q$ be two affine bases.
For $v\in A$ with $ \uu_{v,\bfDelta'}:=\ue_v -\sum_{i=1}^n b_i\ue_{w_i'}$, we have:
$$ \uu_{v,\bfDelta'}=\uu_{v,\bfDelta}-\sum_{j=1}^n b_j\uu_{w_j',\bfDelta}.$$
\end{lemma}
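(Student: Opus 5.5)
The identity is a direct computation, and the plan is to compare both sides coefficient by coefficient in the basis $\{\ue_u\mid u\in A\}$ of $\R^A$. Write $v=\sum_{i=1}^n b_i w_i'$ for the expansion of $v$ in the affine basis $\bfDelta'$. Expand each element of $\bfDelta'$ in the affine basis $\bfDelta$ as $w_j'=\sum_{i=1}^n c_{ji}w_i$, so that by definition
$$\uu_{w_j',\bfDelta}=\ue_{w_j'}-\sum_{i=1}^n c_{ji}\ue_{w_i}.$$
Similarly write $v=\sum_{i=1}^n a_i w_i$, so that $\uu_{v,\bfDelta}=\ue_v-\sum_i a_i\ue_{w_i}$.

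First I substitute these expressions into the right-hand side:
$$\uu_{v,\bfDelta}-\sum_j b_j\uu_{w_j',\bfDelta}=\ue_v-\sum_j b_j\ue_{w_j'}-\sum_i\Bigl(a_i-\sum_j b_jc_{ji}\Bigr)\ue_{w_i}.$$
The first two terms are exactly $\uu_{v,\bfDelta'}$, so it remains to show that the last sum vanishes. For this, substitute $w_j'=\sum_i c_{ji}w_i$ into $v=\sum_j b_jw_j'$. This gives $v=\sum_i\bigl(\sum_j b_jc_{ji}\bigr)w_i$. Since $\bfDelta$ is a basis of $\R^n$, the expansion of $v$ in it is unique, and therefore $a_i=\sum_j b_jc_{ji}$ for every $i$. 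The last sum is thus zero, which proves the identity.

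The only point needing care is that the $\ue$'s need not be distinct. The vector $v$ may itself lie in $\bfDelta$ or $\bfDelta'$, and the two bases may share elements. This causes no problem, because the computation above is a formal identity in $\R^A$ that never uses linear independence of the $\ue$'s; it is valid even when some of them coincide. No step is a genuine obstacle; the only bookkeeping required is the uniqueness of the expansion of $v$ in the basis $\bfDelta$.
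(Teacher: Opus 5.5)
Your computation is correct and is the ``simple calculation'' the paper invokes without writing out. Expanding each $w_j'$ in $\bfDelta$ and using uniqueness of the coordinates of $v$ in $\bfDelta$ cancels all $\ue_{w_i}$-terms, and coinciding indices cause no trouble because only linearity in $\R^A$ is used.
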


The sign rule in the definition above implies that the vectors $\pm \uu_{  w_j,\bfDelta}$, $1\le j\le n$,
are among the generators of the GKZ-condition-cone $\mcC_\mcQ$.
So $ \uu_{v,\bfDelta'}$ is in the subcone of $\mcC_\mcQ$ spanned by $\uu_{v,\bfDelta}$ and  $\pm\uu_{w_j',\bfDelta}$,
$1\le j\le n$.

To get a smaller generating set for $\mcC_\mcQ$, it suffices to fix for every 
polytope $Q\in\mathcal Q$ an affine basis $\bfDelta$ and take the generators
$\uu_{v,\bfDelta}$ (and $-\uu_{v,\bfDelta}$ if appropriate) with respect to this basis.

The poset of polytopal subdivisions gets the following interpretation into that of co-faces.

\begin{proposition}\label{prp:refined:coface}
If $\mcQ$ refines $\mcQ'$ then $\mcC_{\mcQ'}$ is a co-face of the cone $\mcC_\mcQ$.
\end{proposition}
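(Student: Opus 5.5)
The plan is to produce an explicit $\uu\in\mcC_\mcQ$ with $\mcC_{\mcQ'}=\mcC_\mcQ+\R\cdot\uu$, working with generators, Lemma~\ref{Lem:BaseChange}, and the $N=1$ polar correspondence (Theorem~\ref{thm:polar:copolar}, where $P(\mcC)$ is an ordinary dual cone).

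\textbf{Step 1: $\mcC_\mcQ\subseteq\mcC_{\mcQ'}$.} Let $Q\in\mcQ$ and let $Q'\in\mcQ'$ be the unique polytope with $Q\subseteq Q'$. Since the $(Q_i,A_i)$ inside $Q'$ form a marked subdivision of $(Q',A_{Q'})$, we have $A_Q\subseteq A_{Q'}$. Fix an affine basis $\bfDelta'\subseteq A_{Q'}$, and let $\bfDelta=\{w_1,\ldots,w_n\}\subseteq A_Q$ be an affine basis. By Lemma~\ref{Lem:BaseChange},
\[
\uu_{v,\bfDelta}=\uu_{v,\bfDelta'}-\sum_j a_j\uu_{w_j,\bfDelta'} .
\]
Each $w_j$ lies in $A_{Q'}$, so $\pm\uu_{w_j,\bfDelta'}\in\mcC_{\mcQ'}$. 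For $v\in A_Q\subseteq A_{Q'}$ both signs of $\uu_{v,\bfDelta'}$ are available. For $v\notin A_Q$ we have either $v\notin A_{Q'}$, which gives the positive generator, or $v\in A_{Q'}$, which gives both signs. Hence every generator of $\mcC_\mcQ$ lies in $\mcC_{\mcQ'}$.

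\textbf{Step 2: choice of $\uu$.} For each $Q'\in\mcQ'$ fix one affine basis $\bfDelta'_{Q'}\subseteq A_{Q'}$. The only generators of $\mcC_{\mcQ'}$ that may fail to lie in $\mcC_\mcQ$ are the vectors $\varepsilon\,\uu_{v,\bfDelta'_{Q'}}$ with $v\in A_{Q'}$. The other generators, namely $\uu_{v,\bfDelta'_{Q'}}$ with $v\notin A_{Q'}$, reduce by Step 1's base change to some $\uu_{v,\bfDelta}\in\mcC_\mcQ$ plus $\pm$ multiples of the former ones. I want to show that exactly one sign of each such vector lies in $\mcC_\mcQ$, and I expect it to be $-\uu_{v,\bfDelta'_{Q'}}$. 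Then I set
\[
\uu:=-\sum_{Q'}\sum_{v\in A_{Q'}}\uu_{v,\bfDelta'_{Q'}}\in\mcC_\mcQ .
\]
For each such $v$, the vector $+\uu_{v,\bfDelta'_{Q'}}$ equals $-\uu$ plus a sum of elements of $\mcC_\mcQ$, so it lies in $\mcC_\mcQ+\R\uu$. This gives $\mcC_{\mcQ'}\subseteq\mcC_\mcQ+\R\uu$. The reverse inclusion holds by Step 1 together with $\pm\uu\in\mcC_{\mcQ'}$.

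\textbf{Step 3 (main obstacle): membership of $-\uu_{v,\bfDelta'}$ in $\mcC_\mcQ$.} By Theorem~\ref{thm:polar:copolar} with $N=1$, it suffices to show $\psi\cdot\uu_{v,\bfDelta'}\ge 0$ for every $\psi\in P(\mcC_\mcQ)\subseteq\R^A$. Such a $\psi$ satisfies two conditions for each $Q\in\mcQ$. First, $\psi$ agrees on $A_Q$ with an affine function $\ell_Q$. Second, $\psi(v)\le\ell_Q(v)$ for $v\in A$. Applying the second condition to the vertices of an adjacent $Q_2$ gives $\ell_{Q_2}\le\ell_{Q_1}$ on $Q_2$. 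So the piecewise-affine function $g=\ell_Q$ on $Q$ is locally concave across common facets inside $Q'$, and therefore concave on the convex set $Q'$. This is the standard local-to-global convexity argument, which I expect to be the technical heart of the proof. Concavity then gives, for $v=\sum a_jw'_j$ with $w'_j\in\bfDelta'$,
\[
\psi(v)\ge\sum_j a_j\psi(w'_j),
\]
using $\psi(w'_j)=g(w'_j)$ when $w'_j\in A_\mcQ$. The delicate case is a point of $A_{Q'}$ that lies in no $A_Q$ with $Q\subseteq Q'$, where only $\psi\le g$ is available. I would first try to use the definition of refinement, which makes $(Q_i,A_i)$ a marked subdivision of $(Q',A_{Q'})$, to argue that such points do not arise or can be handled. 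Failing that, I would choose $\bfDelta'$ among the vertices of $Q'$, which lie in every covering marking, and treat such $v$ separately via Lemma~\ref{Lem:BaseChange}.
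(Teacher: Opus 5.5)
Your Step 1 and the overall plan are fine: exhibiting some $\uu\in\mcC_\mcQ$ with $\mcC_{\mcQ'}=\mcC_\mcQ+\R\cdot\uu$ is the right goal. The gap is in Step 2, whose choice of $\uu$ rests on a false claim. It is not true that $-\uu_{v,\bfDelta'_{Q'}}\in\mcC_\mcQ$ for all $v\in A_{Q'}$, and in general \emph{neither} sign of $\uu_{v,\bfDelta'_{Q'}}$ lies in $\mcC_\mcQ$.

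The faulty step is the inequality $\psi(v)\ge\sum_j a_j\psi(w'_j)$ in Step 3. Concavity of $g$ on $Q'$ gives it only when $v=\sum_j a_jw'_j$ is a \emph{convex} combination. Coefficients with respect to an affine basis can be negative, and choosing $\bfDelta'$ among the vertices of $Q'$ does not make $v$ a convex combination unless $Q'$ is a simplex.

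Here is a concrete failure. Take $A=\{0,1,2\}^2$ and $\mcQ'=(P,A)$. Let $\mcQ$ cut the square $P=[0,2]^2$ along both diagonals into four triangles, each marked by the points of $A$ it contains. For $\bfDelta'=\{(0,0),(2,0),(0,2)\}$ and $v=(2,2)$ we get $\uu_{v,\bfDelta'}=\ue_{(2,2)}+\ue_{(0,0)}-\ue_{(2,0)}-\ue_{(0,2)}$. The restrictions to $A$ of $\min(x,y)$ and of $\min(x+y,2)$ both lie in $P(\mcC_\mcQ)$ for $N=1$ (Proposition~\ref{prp:gkz:as:polar}). They pair with $\uu_{v,\bfDelta'}$ to $2$ and $-2$ respectively, so $\pm\uu_{v,\bfDelta'}\notin\mcC_\mcQ$. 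This happens even though all four points are vertices of $Q'$ lying in $A_\mcQ$.

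The points you flag as delicate also really occur, and they carry the opposite sign. In the example after Definition~\ref{def:condition:cone}, $\mcQ_1$ refines $\mcQ_0$ and $v_4=(1,1)$ lies in no marking of $\mcQ_1$. There $\mcC_{\mcQ_1}$ is the ray spanned by $\uu_{v_4,\bfDelta}$, with $\bfDelta$ the vertices of $P$, so it contains $\uu_{v_4,\bfDelta}$ but not $-\uu_{v_4,\bfDelta}$.

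The missing idea is to expand with respect to an affine basis of a \emph{piece} of $\mcQ$ rather than of $Q'$. For each $Q'\in\mcQ'$ choose $Q\in\mcQ$ with $Q\subseteq Q'$ and an affine basis $\bfDelta_{Q'}\subseteq A_Q$. Let $\mathcal E$ be the finite set of vectors $\uu_{w,\bfDelta_{Q'}}$ with $w\in A_{Q'}$.

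Each vector in $\mathcal E$ lies in $\mcC_\mcQ$ directly by the sign rule: both signs if $w\in A_Q$, the positive one if $w\in A\setminus A_Q$. So no concavity argument is needed. Moreover $\pm\uu_{w,\bfDelta_{Q'}}\in\mcC_{\mcQ'}$, because $\bfDelta_{Q'}\subseteq A_Q\subseteq A_{Q'}$ and $w\in A_{Q'}$.

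For any affine basis $\bfDelta'\subseteq A_{Q'}$, Lemma~\ref{Lem:BaseChange} gives $\uu_{v,\bfDelta'}=\uu_{v,\bfDelta_{Q'}}-\sum_j b_j\uu_{w'_j,\bfDelta_{Q'}}$. This lies in $\langle\mathcal E\rangle_{\R}$ if $v\in A_{Q'}$. If $v\notin A_{Q'}$, then $v\notin A_Q$, and it lies in $\mcC_\mcQ+\langle\mathcal E\rangle_{\R}$.

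Together with your Step 1 this yields $\mcC_{\mcQ'}=\mcC_\mcQ+\langle\mathcal E\rangle_{\R}=\mcC_\mcQ+\R\cdot\sum_{e\in\mathcal E}e$. The last equality holds because $\mathcal E\subseteq\mcC_\mcQ$. This is essentially the paper's argument.
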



\begin{proof}

For a polytope $Q\in\mathcal{Q}$ (resp. $Q'\in\mathcal{Q'}$), set $\mathcal{B}_Q$ (resp. $\mathcal{B}_{Q'}$) to be the set of all affine bases in $A_Q$ (resp. $A_{Q'}$). Fix polytopes $Q\in\mathcal{Q}$ and $Q'\in\mathcal{Q}'$ with $Q\subseteq Q'$, we denote
$$\mathcal{E}_{Q,Q'}:=\{\underline{u}_{w_j',\bfDelta}\mid \bfDelta':=\{w_1',\ldots,w_n'\}\in\mathcal{B}_{Q'},\ \bfDelta\in\mathcal{B}_{Q}\},$$
and let $\mathcal{E}$ be the union of $\mathcal{E}_{Q,Q'}$ where $Q\in\mathcal{Q}$ and $Q'\in\mathcal{Q}'$ with $Q\subseteq Q'$.

We will show that $\mathcal{C}_{\mathcal{Q}}+\langle\mathcal{E}\rangle_{\mathbb{R}}=\mathcal{C}_{\mathcal{Q}'}$. The proposition then follows by Proposition \ref{prp:face:coface} \emph{(v)}.

For the inclusion $\subseteq$, take $Q\in\mathcal{Q}$ and $Q'\in\mathcal{Q}'$ with $Q\subseteq Q'$, then $\mathcal{B}_Q\subseteq\mathcal{B}_{Q'}$. It follows immediately that the vectors in the GKZ-condition-cone arising from $Q\in\mathcal{Q}$ form a subset of those arising from $Q'\in\mathcal{Q}'$. This shows $\mathcal{C}_{\mathcal{Q}}\subseteq\mathcal{C}_{\mathcal{Q}'}$. Let $\bfDelta\in\mathcal{B}_Q$ and $\bfDelta'=\{w_1',\ldots,w_n'\}\in\mathcal{B}_{Q'}\setminus\mathcal{B}_Q$. Since $w_j'\in A_{Q'}$, $\pm \underline{u}_{w_j',\bfDelta}$ are vectors in GKZ-condition cones arising from $Q'$, and hence $\langle\mathcal{E}\rangle_{\mathbb{R}}\subseteq\mathcal{C}_{Q'}$.

For the other inclusion, according to the discussion after Lemma \ref{Lem:BaseChange}, up to the linear space $\langle\mathcal{E}\rangle_{\mathbb{R}}$, for each polytope $Q'\in\mathcal{Q}'$ we can fix for every $Q'\in\mathcal{Q'}$ an affine basis $\bfDelta'$ which is also an affine basis for some $Q\in\mathcal{Q}$. Moreover, up to this linear space, for the vectors $\pm\underline{u}_{v,\bfDelta}$, it suffices to consider this $\bf\Delta'\subseteq A_Q$, and $v\in A_Q$. We have, up to this linear space, reduced all vectors for $\mathcal{C}_{\mathcal{Q}'}$ to those for $\mathcal{C}_{\mathcal{Q}}$.
\end{proof}

As the main result of this section, we show that the GKZ-cones are $\mu$-polyhedral.

\begin{proposition}\label{prp:gkz:as:polar}
Let $\mcQ$ be a polytopal subdivision of $(P,A)$. Then $\overline{C(\mcQ,N)} = P(\mcC_\mcQ)$. In particular, the closed GKZ-cones are $\mu$-polyhedral cones in $\MNaR$, and hence closed in the product order topology.
\end{proposition}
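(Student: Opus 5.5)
The strategy is to prove the two inclusions $\overline{C(\mcQ,N)} \subseteq P(\mcC_\mcQ)$ and $P(\mcC_\mcQ)\subseteq \overline{C(\mcQ,N)}$ directly from the definitions. The ``in particular'' part then follows at once from Lemma~\ref{lem:polyhedral:cone2} and Corollary~\ref{cor:finite:intersection}, since $\mcC_\mcQ$ is generated by finitely many vectors. Throughout we use that $\Psi\cdot\uu_{v,\bfDelta} = \Psi(v) - \sum_i a_i\Psi(w_i)$. For a polytope $Q\in\mcQ$ with affine basis $\bfDelta\subseteq A_Q$, we write $\psi_{Q,\bfDelta}$ for the unique linear map $\R^n\to\R^N$ with $\psi_{Q,\bfDelta}(w_i)=\Psi(w_i)$. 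Then $\Psi\cdot\uu_{v,\bfDelta} = \Psi(v)-\psi_{Q,\bfDelta}(v)$.

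\emph{First inclusion.} Let $\Psi\in\overline{C(\mcQ,N)}$ with $g=g_{\Psi,\mcQ}$, and fix $Q\in\mcQ$ and an affine basis $\bfDelta\subseteq A_Q$. Since $A_Q\subseteq A_\mcQ$, we have $g(w_i)=\Psi(w_i)$. As $g$ is linear on $K(Q)$, this gives $\psi_{Q,\bfDelta}=\psi_Q$. We then check the two types of generators.
\begin{itemize}
\item For $v\in A_Q$: $\Psi(v)=g(v)=\psi_Q(v)$, so $\Psi\cdot(\pm\uu_{v,\bfDelta})=\mathbf 0$.
\item For $v\in A\setminus A_Q$: Proposition~\ref{prop:convex:up:=:minimum} gives $g(v)\le\psi_Q(v)$. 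Condition (2) or (3) of Definition~\ref{def:gkz:close:cone} gives $\Psi(v)\le g(v)$. Hence $\Psi\cdot\uu_{v,\bfDelta}\le\mathbf 0$.
\end{itemize}
So $\Psi$ is nonpositive on all generators of $\mcC_\mcQ$, and therefore $\Psi\in P(\mcC_\mcQ)$.

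\emph{Reverse inclusion.} Let $\Psi\in P(\mcC_\mcQ)$. For each $Q$, the vanishing on $\pm\uu_{v,\bfDelta}$ for $v\in A_Q$ shows that all points $(v,\Psi(v))$, $v\in A_Q$, lie on the graph of one linear map $\psi_Q$, independent of the chosen $\bfDelta$. Define $g$ on $K(Q)$ as $\psi_Q$. This is well defined on overlaps: $Q\cap Q'$ is a common face, and by Definition~\ref{def:polytopal:subdivision} it is the convex hull of $A_Q\cap Q\cap Q' = A_{Q'}\cap Q\cap Q'$, on which $\psi_Q$ and $\psi_{Q'}$ both agree with $\Psi$. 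Hence $g$ is $\mcQ$-piecewise-linear, satisfies $g(v)=\Psi(v)$ for $v\in A_\mcQ$, and satisfies $\Psi(v)\le\psi_Q(v)$ for all $Q$ and all $v\in A$.

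The main obstacle is convex upwardness. By Proposition~\ref{prop:convex:up:=:minimum} it suffices to show $g(u)=\min_{Q'}\psi_{Q'}(u)$, i.e.\ $\psi_Q(u)\le\psi_{Q'}(u)$ for $u\in K(Q)$. Since $Q=\mathrm{conv}(A_Q)$, each $u\in K(Q)$ is a nonnegative combination of points of $A_Q$. It therefore suffices to show $\psi_Q(v)\le\psi_{Q'}(v)$ for $v\in A_Q$, and this holds because $\psi_Q(v)=\Psi(v)\le\psi_{Q'}(v)$ by the generator condition for $Q'$. The lexicographic order is compatible with nonnegative combinations, so the inequality extends to all of $K(Q)$. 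This gives convexity, and condition (3) is just $\Psi(v)\le\psi_Q(v)=g(v)$ for $Q$ with $v\in K(Q)$. Thus $\Psi\in\overline{C(\mcQ,N)}$.
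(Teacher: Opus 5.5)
Your proof is correct and follows essentially the same route as the paper's. Both arguments translate $\mcQ$-piecewise-linearity into the equalities $\Psi\cdot(\pm\uu_{v,\bfDelta})=\mathbf 0$ for $v\in A_Q$. Both translate convex upwardness (via Proposition~\ref{prop:convex:up:=:minimum}) and condition (3) into the inequalities $\Psi\cdot\uu_{v,\bfDelta}\le\mathbf 0$ for $v\in A\setminus A_Q$. You organize this as two inclusions and make explicit two steps the paper leaves implicit: that the $\psi_Q$ agree on overlaps because of the marking condition, and that the vertex inequalities $\psi_Q(v)\le\psi_{Q'}(v)$ extend to all of $K(Q)$ by nonnegative combinations.
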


\begin{proof}
Let $\bfDelta=\{w_1,\ldots,w_n\}\subseteq A_Q$, $Q\in\mcQ$, be an affine basis, $v\in A_Q$ and $v=\sum_{i=1}^n a_iw_i$ be the (unique) expression of $v$ in $\bfDelta$. We have for $\Psi\in\MNaR$:
$$\Psi\cdot\uu_{v,\bfDelta} = \Psi(v) - \sum_{i=1}^n a_i\Psi(w_i).$$
Hence $\Psi$, looked as a function from $A$ to $\mathbb{R}^N$, can be extended to a linear map on $K(Q)$ if and only if $\Psi\cdot\uu_{v,\bfDelta} = \mathbf 0$ for all $ v\in A_Q$. Therefore we have a well-defined $\mathcal Q$-piecewise linear map $g_{\Psi,\mathcal Q}:K(P)\rightarrow \mathbb R^N$ if and only if the two conditions  $\Psi\cdot\uu_{v,\bfDelta} \leq \mathbf 0$ and $\Psi\cdot(-\uu_{v,\bfDelta}) \leq \mathbf 0$ are fulfilled for any $Q\in\mathcal{Q}$, any affine basis $\bfDelta\subseteq A_Q$ and $v\in A_Q$.
    
The next point is to view the convex upwards property in terms of the cone of GKZ-conditions $\mcC_\mcQ$. We use the characterization of the convex upwards property given in Proposition~\ref{prop:convex:up:=:minimum}. Since the linear maps $\psi_Q$, $Q\in\mcQ$, are completely
determined by the values $\Psi(  v)$, $  v\in A_Q$, we see that $g_{\Psi,\mathcal Q}$ is convex upwards if and only if for all $Q\in \mathcal Q$ and all $v\in A_\mcQ\setminus A_Q$, say $v\in Q'$, we have $\psi_{Q'}(v)=\Psi(v)\le \psi_{Q}(v)$. Let $\bfDelta=\{w_1,\ldots,w_n\} \subseteq A_Q$ be an affine basis and write $v=\sum_{i=1}^n a_iw_i$, then 
$$\Psi({v})=\psi_{Q'}(v)\leq\psi_{Q}(v)=\sum_{i=1}^n a_i\Psi(w_i) \text{ if and only if }\Psi\cdot\uu_{v,\bfDelta} \le \mathbf 0.$$
Therefore the conditions $\Psi\cdot \uu_{v,\bfDelta}\le 0$ with $\bfDelta\subseteq A_Q$ an affine basis, $Q\in \mcQ$,  $ v\in A_\mcQ\setminus A_Q$ are equivalent to the fact that $g_{\Psi,\mathcal Q}$ is convex upwards.
    
Suppose $v\in A\setminus A_\mcQ$, recall that $g_{\Psi,\mathcal{Q}}(v)=\min\{\psi_{Q}(  v)\mid Q\in \mathcal Q\}$. Fix $Q\in \mcQ$ and an affine basis $\mathbf{\Delta}\subseteq A_Q$ as above, it follows for $v=\sum_{i=1}^n a_iw_i$ that $\psi_{Q}(v)= \sum_{i=1}^n a_i\Psi(w_i)$. With similar calculation, the condition $g_{\Psi,Q}(v)\ge \Psi(v)$ for all  $v\in A\setminus A_\mcQ$ is equivalent to  $\Psi\cdot \uu_{v,\bfDelta}\le 0$ for all $Q\in \mcQ$, $\bfDelta\subseteq A_Q$ an affine basis and $ v\in A \setminus A_\mcQ$.

Summarizing we have: $\Psi\in\overline{C(\mcQ,N)}$ if and only if $\Psi\in P(\mcC_\mcQ)$.
\end{proof}

Since the closed GKZ-cones are $\mu$-polyhedral cones, it follows immediately:

\begin{corollary}\label{coro:elementary:moves}
Let $\Psi\in {C(\mcQ,N)}$. If $\Phi\in M_{N,r}(\mathbb R)$ is the matrix obtained from 
$\Psi$ by one of the following elementary moves, then $\Phi\in {C(\mcQ,N)}$ too:
\begin{enumerate}
\item a row of $\Psi$ is multiplied by a positive real number;
\item a multiple of a row of $\Psi$  is added to another row below;
\item a row of $\Psi$ is changed by adding the same real number to all entries in that row.
\end{enumerate}
\end{corollary}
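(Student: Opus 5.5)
The plan is to avoid working with the open cone directly. Instead I would show that each elementary move sends a matrix $\Psi$ to a matrix $\Phi$ lying in exactly the same closed GKZ-cones $\overline{C(\mcQ',N)}$ as $\Psi$, for every polytopal subdivision $\mcQ'$ of $(P,A)$. Proposition~\ref{prp:gkz:open:closed} expresses $C(\mcQ,N)$ purely in terms of membership in the closed cones $\overline{C(\mcQ,N)}$ and $\overline{C(\mcQ',N)}$ for $\mcQ'>\mcQ$. So this invariance immediately gives $\Phi\in C(\mcQ,N)$.

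By Proposition~\ref{prp:gkz:as:polar} and Lemma~\ref{lem:polyhedral:cone2}, $\Psi\in\overline{C(\mcQ',N)}=P(\mcC_{\mcQ'})$ if and only if $\Psi\cdot\uu\le\mathbf 0$ for every generator $\uu$ of $\mcC_{\mcQ'}$ listed in Definition~\ref{def:condition:cone}. It therefore suffices to prove that, for each move and for every such generator $\uu$, we have $\Phi\cdot\uu\le\mathbf 0$ if and only if $\Psi\cdot\uu\le\mathbf 0$.

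For moves (1) and (2), I would write $\Phi=T\Psi$, where $T\in M_{N,N}(\R)$ is either a diagonal matrix with one positive entry and the others equal to $1$, or the identity plus $c$ times an elementary matrix. In the second case the elementary matrix adds row $i$ to a row of lower lexicographic significance, since the top row is the most significant coordinate. The key observation is that such a $T$ preserves the leading nonzero coordinate of any $\bfa\in\R^N$ up to a positive scalar. Indeed, if the first nonzero entry of $\bfa$ (reading from the top) is in position $j$, then $T\bfa$ has zero entries above position $j$, and its entry at position $j$ is a positive multiple of $a_j$; only lower entries are modified. Hence $\bfa\le\mathbf 0\iff T\bfa\le\mathbf 0$ for all $\bfa$. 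Since $\Phi\cdot\uu=T(\Psi\cdot\uu)$, this handles moves (1) and (2) for all $\uu\in\R^A$, not just for generators.

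For move (3), I would write $\Phi=\Psi+c\,E$, where $E$ has the selected row equal to $(1,\ldots,1)$ and all other rows zero. Then $E\cdot\uu$ is the sum of the coordinates of $\uu$ placed in that row. Every generator $\pm\uu_{v,\bfDelta}=\pm(\ue_v-\sum_i a_i\ue_{w_i})$ has coordinate sum $1-\sum_i a_i=0$, because $v=\sum_i a_iw_i$ is an affine combination: the first coordinate of the embedding $u\mapsto(1,u)$ forces $\sum_i a_i=1$. Hence $\Phi\cdot\uu=\Psi\cdot\uu$ for all generators of all $\mcC_{\mcQ'}$. No real obstacle arises here. The only point requiring care is the lexicographic sign-preservation in move (2), which depends on the direction "added to another row below" so that the leading coordinate is untouched.
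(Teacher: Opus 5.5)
Your proof is correct and follows essentially the same route as the paper. The paper combines the polar description $\overline{C(\mathcal{Q}',N)}=P(\mathcal{C}_{\mathcal{Q}'})$ from Proposition~\ref{prp:gkz:as:polar} with the decomposition in Proposition~\ref{prp:gkz:open:closed}. It notes that moves (1) and (2) preserve the lexicographic sign of each $\Psi\cdot\underline{u}$, and uses $\sum_i a_i=1$ to show that move (3) leaves every $\Psi\cdot\underline{u}_{v,\mathbf{\Delta}}$ unchanged. Your triangular matrix $T$ argument spells out what the paper condenses into ``these operations preserve the equalities and the strict inequalities.''
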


\begin{proof}
The first two claims follow immediately from Proposition~\ref{prp:gkz:as:polar} together with 
Proposition~\ref{prp:gkz:open:closed} because these operations preserve the equalities and the strict inequalities. To prove {\it 3)}, recall that  $\Psi\cdot\uu_{v,\bfDelta} = \Psi(v) - \sum_{i=1}^n a_i\Psi(w_i)$, where $\bfDelta=\{w_1,\ldots,w_n\}$ is an affine basis and hence $\sum_{i=1}^n a_i=1$. It follows $\Psi\cdot\uu_{v,\bfDelta} =\Phi\cdot\uu_{v,\bfDelta} $, which proves the corollary.
\end{proof}

\subsection{Regular subdivisions}

In this section we introduce and study regular polyhedral subdivisions, which are higher rank generalizations of the coherent subdivisions in \cite{GKZ}.

\begin{definition}\label{def:regular:condition:cone}
A polytopal subdivision $\mcQ$ of $(P,A)$ is called \emph{regular} if the cone $\mcC_\mcQ$ is properly contained in each $\mcC_{\mcQ'}$ with $\mcQ'>\mcQ$, i.e. $\mcQ$ refines $\mcQ'$ and $\mcQ'\not=\mcQ$.
\end{definition}

The notion of regularity is clearly independent of $N$. Proposition~\ref{prp:gkz:open:closed} together with Proposition~\ref{psp:regular} imply that for $N=1$, the definition above coincides with the classical definition in \cite[Chapter 7, Definition 2.3]{GKZ}, where the term \emph{coherent} is used instead of \emph{regular}.

\begin{proposition}\label{psp:regular}
The polytopal subdivision $\mcQ$ is regular if and only if $C(\mcQ,N)$ is not the empty set.
\end{proposition}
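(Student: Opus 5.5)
The plan is to translate the statement into the language of polar duality and faces, using Proposition~\ref{prp:gkz:open:closed}, Proposition~\ref{prp:gkz:as:polar} and Proposition~\ref{prp:refined:coface}. By Proposition~\ref{prp:gkz:open:closed},
\[
C(\mcQ,N)=\overline{C(\mcQ,N)}\setminus\bigcup_{\mcQ'>\mcQ}\overline{C(\mcQ',N)},
\]
and by Proposition~\ref{prp:gkz:as:polar} we have $\overline{C(\mcQ,N)}=P(\mcC_\mcQ)$ and $\overline{C(\mcQ',N)}=P(\mcC_{\mcQ'})$. If $\mcQ'>\mcQ$, then $\mcC_{\mcQ'}$ is a co-face of $\mcC_\mcQ$ by Proposition~\ref{prp:refined:coface}. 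Proposition~\ref{prp:face:coface}(ii),(iii) then shows that $P(\mcC_{\mcQ'})$ is a face of $P(\mcC_\mcQ)$. This face is proper exactly when $\mcC_{\mcQ'}\neq\mcC_\mcQ$, because the polar correspondence of Theorem~\ref{thm:polar:copolar} is a bijection. So $C(\mcQ,N)$ is the complement in the $\mu$-polyhedral cone $P(\mcC_\mcQ)$ of a finite union of its faces $P(\mcC_{\mcQ'})$, $\mcQ'>\mcQ$.

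For ``regular $\Rightarrow$ nonempty'': if $\mcQ$ is regular, every $\mcC_{\mcQ'}$ with $\mcQ'>\mcQ$ properly contains $\mcC_\mcQ$. Hence every removed set $P(\mcC_{\mcQ'})$ is a proper face of $P(\mcC_\mcQ)$. By Proposition~\ref{prp:cone:not:union}, a $\mu$-polyhedral cone is not the union of its proper faces. There are finitely many faces by Proposition~\ref{prp:face:coface}(v), and each removed set is one of them, so the removed sets cannot cover $P(\mcC_\mcQ)$ and $C(\mcQ,N)\neq\emptyset$.

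For ``nonempty $\Rightarrow$ regular'': suppose $\mcQ$ is not regular. Then some $\mcQ'>\mcQ$ satisfies $\mcC_{\mcQ'}=\mcC_\mcQ$. (Proposition~\ref{prp:refined:coface} already gives $\mcC_\mcQ\subseteq\mcC_{\mcQ'}$, so failure of proper containment means equality.) Then $\overline{C(\mcQ',N)}=P(\mcC_{\mcQ'})=P(\mcC_\mcQ)=\overline{C(\mcQ,N)}$, and the formula of Proposition~\ref{prp:gkz:open:closed} removes everything, giving $C(\mcQ,N)=\emptyset$.

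Both directions are mostly bookkeeping once these results are lined up. The one step to check carefully is that the notion ``face'' used in Proposition~\ref{prp:cone:not:union} matches the sets $P(\mcC_{\mcQ'})$ appearing here. That is exactly what Proposition~\ref{prp:face:coface}(ii) provides: writing $\mcC_{\mcQ'}=\mcC_\mcQ+\R\cdot\uu$ with $\uu\in\mcC_\mcQ$, we get $P(\mcC_{\mcQ'})=P(\mcC_\mcQ)\cap H^0_\uu$, and $\uu\in\mcC_\mcQ=\mcP(P(\mcC_\mcQ))$. Also, $P(\mcC_{\mcQ'})$ is a proper subset of $P(\mcC_\mcQ)$ whenever the cones differ, by injectivity of $P$.
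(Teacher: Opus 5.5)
Your proof is correct and follows essentially the same route as the paper: in the non-regular case $P(\mcC_\mcQ)=P(\mcC_{\mcQ'})$ forces $\overline{C(\mcQ,N)}=\overline{C(\mcQ',N)}$, so Proposition~\ref{prp:gkz:open:closed} gives emptiness; in the regular case Proposition~\ref{prp:refined:coface} and the polar correspondence make every removed cone a proper face, and Proposition~\ref{prp:cone:not:union} then gives non-emptiness. Your extra checks, that the removed sets are faces in the sense of Definition~\ref{def:face} and that failure of regularity means equality of the condition-cones, just make explicit what the paper leaves implicit.
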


\begin{proof}
If $\mcQ$ is not regular, then there exists a polytopal subdivision $\mcQ'$ such that $\mcQ'>\mcQ$ and $\mcC_\mcQ = \mcC_{\mcQ'}$. Hence $\overline{C(\mcQ,N)} = P(\mcC_\mcQ) = P(\mcC_{\mcQ'}) = \overline{C(\mcQ',N)}$; so $C(\mcQ,N) = \varnothing$ by Proposition \ref{prp:gkz:open:closed}.

Suppose that $\mcQ$ is regular, \emph{i.e.}, for every polytopal subdivision $\mcQ'>\mcQ$ we have $\mcC_\mcQ \subsetneq \mcC_{\mcQ'}$. So, by Proposition \ref{prp:refined:coface}, each $\mcC_{\mcQ'}$ is a \emph{proper} co-face of $\mcC_\mcQ$. Then, by the polar correspondence (Theorem \ref{thm:polar:copolar}), $P(\mcC_{\mcQ'}) = \overline{C(\mcQ',N)}$ is a proper face of $P(\mcC_\mcQ) = \overline{C(\mcQ,N)}$. Thus, the open cone $C(\mcQ,N)$ is non-empty, since by Proposition \ref{prp:cone:not:union}, the polyhedral cone $\overline{C(\mcQ,N)}$ is not the union of  its proper faces.
\end{proof}

Let $\mfS$ be the collection of the GKZ-condition-cones $\mcC_\mcQ$, where $\mcQ$ runs over all regular subdivisions of $(P,A)$.

\begin{proposition}\label{prp:condition:cofan}
The collection $\mfS$ is a co-fan of polyhedral cones in $\R^A$. The cones $\mcC_\mcQ$, with $\mcQ$ a regular polytopal subdivision of $(P,A)$, are pairwise different. The co-faces of $\mcC_\mcQ$ are the cones $\mcC_{\mcQ'}$ with $\mcQ'$ a (regular) polytopal subdivision such that $\mcQ$ refines $\mcQ'$.
\end{proposition}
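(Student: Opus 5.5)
The plan is to transport everything to the matrix side through the polar correspondence, where the completeness theorem (Theorem~\ref{thm:gkz:complete}) supplies the geometric input. There are three assertions to prove, and I would take them in the order below.

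\emph{Pairwise distinctness.} Let $\mcQ_1\neq\mcQ_2$ be regular with $\mcC_{\mcQ_1}=\mcC_{\mcQ_2}$. Then Proposition~\ref{prp:gkz:as:polar} gives $\overline{C(\mcQ_1,N)}=\overline{C(\mcQ_2,N)}$. By Proposition~\ref{psp:regular} I can pick $\Psi\in C(\mcQ_1,N)$. This $\Psi$ also lies in $\overline{C(\mcQ_2,N)}$, so Lemma~\ref{lem:Q:refine}(ii) says $\mcQ_2$ refines $\mcD_{\Psi,\mcQ_2}$. Next I compare the two piecewise-linear functions. Both $g_{\Psi,\mcQ_1}$ and $g_{\Psi,\mcQ_2}$ equal the pointwise minimum of the affine functions interpolating $\Psi$ (Proposition~\ref{prop:convex:up:=:minimum}), so they should both coincide with the upper hull function $g_\Psi$ built in the proof of Theorem~\ref{thm:gkz:complete}. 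If that holds, the two functions agree, and hence $\mcD_{\Psi,\mcQ_2}=\mcD_{\Psi,\mcQ_1}=\mcQ_1$. The markings agree as well, since both are read off from where $g=\Psi$. So $\mcQ_2$ refines $\mcQ_1$, and symmetrically $\mcQ_1$ refines $\mcQ_2$, which forces $\mcQ_1=\mcQ_2$.

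\emph{Co-faces.} If $\mcQ$ refines $\mcQ'$, then $\mcC_{\mcQ'}$ is a co-face of $\mcC_\mcQ$ by Proposition~\ref{prp:refined:coface}. For the converse, take any co-face $\mcF=\mcC_\mcQ+\R\uu$. By Proposition~\ref{prp:face:coface}, $P(\mcF)$ is a face of $\overline{C(\mcQ,N)}$. Since $P(\mcF)$ is a $\mu$-polyhedral cone, Proposition~\ref{prp:cone:not:union} lets me choose $\Psi$ in $P(\mcF)$ but outside all of its proper faces. Set $\mcQ':=\mcD_{\Psi,\mcQ}$. Then $\Psi\in C(\mcQ',N)$, so $\mcQ'$ is regular by Proposition~\ref{psp:regular}, and $\mcQ$ refines $\mcQ'$ by Lemma~\ref{lem:Q:refine}(ii). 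It remains to show $P(\mcF)=\overline{C(\mcQ',N)}$.
\begin{itemize}
\item Both cones are faces of $\overline{C(\mcQ,N)}$. For $\overline{C(\mcQ',N)}$ this follows from Proposition~\ref{prp:refined:coface} together with the polar correspondence.
\item Their intersection is a face of each (Proposition~\ref{prp:face:coface}(iv) and the co-fan property) and contains $\Psi$.
\item Since $\Psi$ avoids the proper faces of $P(\mcF)$, the intersection is all of $P(\mcF)$. Since $\Psi\in C(\mcQ',N)$ and Proposition~\ref{prp:gkz:open:closed} holds, it is all of $\overline{C(\mcQ',N)}$.
\end{itemize}
Hence $P(\mcF)=\overline{C(\mcQ',N)}$, and so $\mcF=\mcC_{\mcQ'}$.

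\emph{Co-fan.} Condition (1) is now immediate. For condition (2), take regular $\mcQ_1,\mcQ_2$. Under the polar correspondence (Theorem~\ref{thm:polar:copolar}), the sum $\mcC_{\mcQ_1}+\mcC_{\mcQ_2}$ corresponds to $F:=\overline{C(\mcQ_1,N)}\cap\overline{C(\mcQ_2,N)}$. Proceeding as in the co-face step, I pick $\Psi$ in $F$ outside its proper faces and set $\mcQ'=\mcD_{\Psi,\mcQ_1}$. By the uniqueness of $g_\Psi$ noted above, $\mcQ'=\mcD_{\Psi,\mcQ_2}$ as well, so $\mcQ'$ is coarsened from both $\mcQ_1$ and $\mcQ_2$. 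I then want $F=\overline{C(\mcQ',N)}$. The inclusion $\overline{C(\mcQ',N)}\subseteq F$ follows from Lemma~\ref{lem:Q:refine}(i). The reverse inclusion needs $F$ to be a face of both closed cones. I would obtain this by decomposing each $\overline{C(\mcQ_i,N)}$ into the disjoint open cones $C(\mcQ'',N)$ with $\mcQ''$ coarser than $\mcQ_i$, and then using that a point lies in a single open cone by uniqueness of $g_\Psi$. This identifies $F$ as the union of the open cones coarser than both, which is $\overline{C(\mcQ',N)}$ by maximality of $\Psi$.

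The main obstacle is the uniqueness statement used throughout: for any regular $\mcQ$ with $\Psi\in\overline{C(\mcQ,N)}$, one has $g_{\Psi,\mcQ}=g_\Psi$. Its proof has two parts. One inequality uses that $g_{\Psi,\mcQ}\ge\Psi$ on $A$ and is concave, so it dominates every point of $\Omega_\Psi$. The other inequality uses that $g_{\Psi,\mcQ}$ is attained on $A_\mcQ$ within each $Q$, so its graph lies in $\Omega_\Psi$. This requires care with the lexicographic order, but it mirrors the Claim in the proof of Theorem~\ref{thm:gkz:complete}.
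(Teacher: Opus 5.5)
Your route differs from the paper's. The paper applies Proposition~\ref{prp:gkz:as:polar} with $N=1$, quotes \cite[Chapter 7, Proposition 1.5]{GKZ} (the cones $\overline{C(\mcQ,1)}$ with $\mcQ$ regular form a fan whose faces are the cones of regular coarsenings), and transports this through Theorem~\ref{thm:polar:copolar}. You instead argue intrinsically for every $N$, in effect reproving the classical fan theorem. Much of this checks out:
\begin{itemize}
\item The identity $g_{\Psi,\mcQ}=g_\Psi$ holds by exactly the two inequalities you describe (using $Q=\mathrm{conv}(A_Q)$), so $\mcD_{\Psi,\mcQ}$ depends only on $\Psi$.
\item The distinctness argument is correct.
\item Co-fan axiom (2) works. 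The cleanest finish is this: by Propositions~\ref{prp:refined:coface} and~\ref{prp:face:coface}, $\overline{C(\mcQ',N)}=\overline{C(\mcQ_1,N)}\cap H^0_{\uu}=F\cap H^0_{\uu}$ with $\uu\in\mcC_{\mcQ_1}\subseteq\mcP(F)$. So $\overline{C(\mcQ',N)}$ is a face of $F$ containing $\Psi$, hence equals $F$.
\end{itemize}

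The gap is the last bullet of your co-face step. You have a face $G$ of $\overline{C(\mcQ',N)}$ containing $\Psi\in C(\mcQ',N)$, and you conclude $G=\overline{C(\mcQ',N)}$ from Proposition~\ref{prp:gkz:open:closed}. That proposition only gives $\Psi\notin\overline{C(\mcQ'',N)}$ for $\mcQ''>\mcQ'$. To conclude, you would need every proper face of $\overline{C(\mcQ',N)}$ to lie in such a $\overline{C(\mcQ'',N)}$. That is the co-face characterization you are proving, now for $\mcQ'$. Since $\mcQ'=\mcQ$ can occur, an induction over the refinement order does not remove the circularity. In the paper, ``open cone $=$ closed cone minus its proper faces'' appears only in Corollary~\ref{cor:open:partition}, as a consequence of Theorem~\ref{thm:gkz:correspondence}, i.e.\ of this very proposition.

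What you need is a direct relative-interior lemma. For $\Psi\in C(\mcQ',N)$ one has $\Psi\cdot\uu_{v,\bfDelta}=\Psi(v)-\psi_Q(v)<\mathbf 0$ for every one-sided generator ($Q\in\mcQ'$, $\bfDelta\subseteq A_Q$, $v\in A\setminus A_Q$). There are two cases.
\begin{itemize}
\item If $v\notin A_{\mcQ'}$, this is $\Psi(v)<g_\Psi(v)\le\psi_Q(v)$ by Proposition~\ref{prop:convex:up:=:minimum}.
\item If $v\in A_{\mcQ'}\setminus A_Q$, equality would give $g_\Psi(v)=\Psi(v)=\psi_Q(v)$. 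Then $v\in\{x\in P\mid g_\Psi(x)=\psi_Q(x)\}$, which by the proof of Lemma~\ref{lem:domain:linearity} is the domain of linearity containing $Q$, i.e.\ $Q$ itself. Then $v$ lies in the marking $A_Q=\{w\in A\cap Q\mid g_\Psi(w)=\Psi(w)\}$, a contradiction.
\end{itemize}
Since $\Psi$ kills the two-sided generators, $\Psi\cdot\uu=\mathbf 0$ for $\uu\in\mcC_{\mcQ'}$ forces all one-sided coefficients to vanish, by compatibility of the lexicographic order with addition and positive scaling. Hence $-\uu\in\mcC_{\mcQ'}$ and $\overline{C(\mcQ',N)}\cap H^0_{\uu}=\overline{C(\mcQ',N)}$. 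With this lemma your co-face step goes through, and with it co-fan axiom (1).
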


\begin{proof}
Applying Proposition \ref{prp:gkz:as:polar} to $N=1$ shows that the collection $\mfS$ is the co-polar of the collection of closed GKZ-cones $\overline{C(\mcQ,1)}$, where $\mcQ$ runs over regular polytopal subdivisions. In \cite[Chapter 7, Proposition 1.5]{GKZ} it is proved that this collection of GKZ-cones is a fan of polyhedral cones in $M_{1,r}(\R)\simeq\R^A$. They also proved that the faces of the cone $\overline{C(\mcQ,1)}$ are the GKZ-cones $\overline{C(\mcQ',1)}$ such that $\mcQ$ refines $\mcQ'$ and $\mcQ'$ is regular. It follows from this description of the faces that these GKZ-cones are all different.

Now it remains to apply Theorem \ref{thm:polar:copolar} to get all the stated properties of the co-fan $\mfS$ in $\R^A$.
\end{proof}

Note that, by the definition of a regular subdivision, the collection $\mfS$ is also the collection of all the GKZ-condition-cones. Indeed the condition-cone $\mcC_\mcQ$ for a non-regular $\mcQ$ is equal to a condition-cone $\mcC_{\mcQ'}$ with $\mcQ'$ a regular polytopal subdivision (which is refined by $\mcQ$).

Let $\bfSigma_N$ be the collection of the GKZ-cones $\overline{C(\mcQ,N)}$ where $\mcQ$ runs over all regular polytopal subdivision of $(P,A)$.

\begin{theorem}\label{thm:gkz:correspondence}
    The fan $\bfSigma_N$ and the co-fan $\mfS$ are in polar correspondence. In particular the collection $\bfSigma_N$ of the GKZ-cones $\overline{C(\mcQ,N)}$, where $\mcQ$ runs over all regular polytopal subdivision of $(P,A)$, is a fan of $\mu$-polyhedral cones in $\MNaR$. These cones are pairwise different. The faces of the cone $\overline{C(\mcQ,N)}$ are the cones $\overline{C(\mcQ',N)}$ with $\mcQ'$ a regular polytopal subdivision such that $\mcQ$ refines $\mcQ'$. Moreover the fan $\bfSigma_N$ is complete.
\end{theorem}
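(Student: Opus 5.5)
The plan is to transport everything through the polar correspondence, then treat completeness separately using Theorem~\ref{thm:gkz:complete}.

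\emph{Polar correspondence.} By Proposition~\ref{prp:gkz:as:polar}, for every regular $\mcQ$ we have $\overline{C(\mcQ,N)} = P(\mcC_\mcQ)$, and by Theorem~\ref{thm:polar:copolar} also $\mcP(\overline{C(\mcQ,N)})=\mcC_\mcQ$. Hence the map $\mcC\mapsto P(\mcC)$ sends $\mfS$ bijectively onto $\bfSigma_N$, which is the asserted polar correspondence.

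\emph{Fan structure, distinctness and faces.} By Proposition~\ref{prp:condition:cofan}, $\mfS$ is a co-fan, so Theorem~\ref{thm:fan:cofan} shows that $\bfSigma_N$ is a fan of $\mu$-polyhedral cones. Pairwise distinctness follows from the same proposition: the $\mcC_\mcQ$ are pairwise different for regular $\mcQ$, and $P$ is injective on polyhedral cones. For the faces, Proposition~\ref{prp:face:coface}(iii) says that faces of $P(\mcC_\mcQ)$ are exactly the $P(\mcF)$ with $\mcF$ a co-face of $\mcC_\mcQ$. By Proposition~\ref{prp:condition:cofan}, these co-faces are exactly the $\mcC_{\mcQ'}$ with $\mcQ'$ regular and $\mcQ$ refining $\mcQ'$. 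Applying $P$ gives the faces $\overline{C(\mcQ',N)}$.

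\emph{Completeness.} Take $\Psi\in\MNaR$. Theorem~\ref{thm:gkz:complete} gives a subdivision $\mcQ$ with $\Psi\in C(\mcQ,N)$. In particular $C(\mcQ,N)\neq\varnothing$, so $\mcQ$ is regular by Proposition~\ref{psp:regular}. Then $\Psi\in\overline{C(\mcQ,N)}\in\bfSigma_N$, hence the union of the cones in $\bfSigma_N$ is all of $\MNaR$.

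\emph{Main obstacle.} The only delicate point is that Proposition~\ref{prp:condition:cofan} imports the $N=1$ fan property from \cite{GKZ}. I would make sure its description of co-faces matches the statement exactly, namely that every co-face of $\mcC_\mcQ$ is some $\mcC_{\mcQ'}$ with $\mcQ'$ regular. It is also worth checking that the non-regular coarsenings $\mcQ'$ cause no trouble. They do not: the remark after Proposition~\ref{prp:condition:cofan} says each such $\mcC_{\mcQ'}$ coincides with the condition cone of a regular subdivision, so the collection of faces is the same.
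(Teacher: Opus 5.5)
Your proposal is correct and follows the paper's route. Every claim except completeness is transported through the polar correspondence, using Proposition~\ref{prp:gkz:as:polar} together with Proposition~\ref{prp:condition:cofan}. Completeness comes from Theorem~\ref{thm:gkz:complete}, and your use of Proposition~\ref{psp:regular} to see that the subdivision produced there is regular makes explicit a step the paper leaves implicit.
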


\begin{proof} 
All the statements but the last one follow immediately by Proposition \ref{prp:gkz:as:polar} and Proposition \ref{prp:condition:cofan}. The last statement follows by Theorem \ref{thm:gkz:complete}.
\end{proof}

We discuss some consequences of the theorem.

\begin{corollary}\label{cor:gkz:dimension}
The dimension of the GKZ-cone $\overline{C(\mcQ,N)}$ is 
$$\dim\overline{C(\mcQ,N)} = N\codim L_{\mcC_\mcQ}=N\cdot \dim \overline{C(\mcQ,1)}.$$
\end{corollary}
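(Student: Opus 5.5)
The plan is to reduce everything to Proposition~\ref{prp:polar:dimension} via the identification $\overline{C(\mcQ,N)} = P(\mcC_\mcQ)$ of Proposition~\ref{prp:gkz:as:polar}. Here $P(\mcC_\mcQ)$ is the $N$-polar cone of the polyhedral GKZ-condition-cone $\mcC_\mcQ\subseteq\R^A$.

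\textbf{Step 1.} By Proposition~\ref{prp:gkz:as:polar}, $\overline{C(\mcQ,N)} = P(\mcC_\mcQ)$. Since $\mcC_\mcQ$ is a polyhedral cone in $\R^r=\R^A$, it can be written as a finite intersection of half-spaces $h^\leq_{\uu_1}\cap\cdots\cap h^\leq_{\uu_k}$. Proposition~\ref{prp:polar:dimension}~(i) then gives
\[
\dim \overline{C(\mcQ,N)} = \dim P(\mcC_\mcQ) = N\cdot\codim L_{\mcC_\mcQ}.
\]
This is the first equality.

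\textbf{Step 2.} The key fact stated after Definition~\ref{def:condition:cone} is that $\mcC_\mcQ$ does not depend on $N$, and hence neither does the subspace $L_{\mcC_\mcQ}=\mcC_\mcQ\cap(-\mcC_\mcQ)$. Applying Step~1 with $N=1$ gives
\[
\dim\overline{C(\mcQ,1)}=\codim L_{\mcC_\mcQ}.
\]
Multiplying by $N$ yields the second equality.

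\textbf{Main point to check.} Proposition~\ref{prp:polar:dimension} must apply even when $\mcQ$ is not regular. It does, because it holds for an arbitrary polyhedral cone $\mcC$, and nothing in the argument uses regularity. When $\mcQ$ is not regular one could alternatively replace $\mcQ$ by the regular $\mcQ'$ with $\mcC_{\mcQ}=\mcC_{\mcQ'}$, but this is unnecessary. The remaining step is routine bookkeeping: $\dim$ here means the dimension of the linear span, which agrees with the dimension of the Euclidean closure, as already noted in Proposition~\ref{prp:polar:dimension}.
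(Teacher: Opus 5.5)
Your proof is correct and is essentially the paper's argument. The paper also combines Proposition~\ref{prp:polar:dimension}~(i) with the identification $\overline{C(\mcQ,N)}=P(\mcC_\mcQ)$, which it cites via Theorem~\ref{thm:gkz:correspondence}, and uses the $N$-independence of $\mcC_\mcQ$; citing Proposition~\ref{prp:gkz:as:polar} directly, as you do, has the small advantage of not requiring $\mcQ$ to be regular.
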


\begin{proof} 
The dimension formula follows from Proposition \ref{prp:polar:dimension} and Theorem \ref{thm:gkz:correspondence}.
\end{proof}

\begin{corollary}\label{cor:open:partition}
\begin{itemize}
\item[\rm (i)] If $\mcQ$ is regular, then the open GKZ-cone $C(\mcQ,N)$ is open and dense in $\overline{C(\mcQ,N)}$ with respect to the product order topology.
\item[\rm (ii)] If $\mcQ$ is regular, then the dimension of $C(\mcQ,N)$ is equal to that of $\overline{C(\mcQ,N)}$.
\item[\rm (iii)] The collection of open GKZ-cones $C(\mcQ,N)$, where $\mcQ$ is a regular polytopal subdivision of $(P,A)$, forms a partition of $M_{N,r}(\mathbb{R})$.
\end{itemize}
\end{corollary}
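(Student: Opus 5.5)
I would prove the three items in the order (iii), (i), (ii), since (iii) is essentially a repackaging of results already established.

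For (iii), coverage comes from Theorem~\ref{thm:gkz:complete}: every $\Psi$ lies in some $C(\mcQ,N)$. That $\mcQ$ is automatically regular, because $C(\mcQ,N)\neq\varnothing$ and Proposition~\ref{psp:regular} applies. For disjointness, suppose $\Psi\in C(\mcQ_1,N)\cap C(\mcQ_2,N)$. By Theorem~\ref{thm:gkz:correspondence}, $\overline{C(\mcQ_1,N)}\cap\overline{C(\mcQ_2,N)}$ is a face of both cones, of the form $\overline{C(\mcQ',N)}$ with $\mcQ'$ regular and $\mcQ_1,\mcQ_2$ both refining $\mcQ'$. If $\mcQ'\neq \mcQ_1$, then $\mcQ'>\mcQ_1$, and Proposition~\ref{prp:gkz:open:closed} says $\Psi\notin\overline{C(\mcQ',N)}$, a contradiction. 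Hence $\mcQ_1=\mcQ'=\mcQ_2$. A more direct route to the same conclusion: $\Psi$ determines $g_\Psi$ intrinsically as the upper hull of $\Omega_\Psi$, so $\mcD_{\Psi,\mcQ}$, and hence $\mcQ$, is determined by $\Psi$.

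For (i), density and openness both rest on the identity
\[
C(\mcQ,N)=\overline{C(\mcQ,N)}\setminus\bigcup_{F}F,
\]
where $F$ runs over the proper faces of $\overline{C(\mcQ,N)}$. This follows from Proposition~\ref{prp:gkz:open:closed} together with the face description in Theorem~\ref{thm:gkz:correspondence}: regularity makes each $\overline{C(\mcQ',N)}$ with $\mcQ'>\mcQ$ a proper face, and only finitely many faces occur. Each face is $\mu$-polyhedral and therefore closed in the product order topology (Corollary~\ref{cor:finite:intersection}). The complement of a finite union of closed sets is relatively open, so $C(\mcQ,N)$ is open in $\overline{C(\mcQ,N)}$.

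Density is the main point. Write $C=\overline{C(\mcQ,N)}=P(\mcC_\mcQ)$, take $\Psi\in C$, and pick $\Psi_0\in C(\mcQ,N)$, which exists by Proposition~\ref{psp:regular}. I claim $\Psi+t\Psi_0\in C(\mcQ,N)$ for every $t>0$. Each proper face has the form $C\cap H^0_{\uu}$ with $\uu\in\mcC_\mcQ$, and $\Psi_0$ lies in none of them, so $\Psi_0\cdot\uu<\mathbf 0$ for every such $\uu$ with $C\not\subseteq H^0_\uu$. Since $\Psi\cdot\uu\le\mathbf 0$, we get $(\Psi+t\Psi_0)\cdot\uu<\mathbf 0$, so $\Psi+t\Psi_0$ avoids every proper face. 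It remains to see that $\Psi+t\Psi_0\to\Psi$ as $t\to0^+$ in the product order topology. This is the step I expect to be the real obstacle, because scalar multiplication is not jointly continuous in the lexicographic order topology on $\R^N$ when $N\ge2$. I would handle it columnwise: the column $\Psi(v)+t\Psi_0(v)$ converges to $\Psi(v)$ in the order topology, since for any open interval $(\mathbf a,\mathbf b)\ni\Psi(v)$ the difference between $\Psi(v)$ and each endpoint has a nonzero leading coordinate, and a small $t\Psi_0(v)$ cannot overturn that leading coordinate. If some delicate case does break this argument, I would instead replace $\Psi_0$ by a perturbation that is nonzero only in the lowest row, using Corollary~\ref{coro:elementary:moves} to choose $\Psi_0$ suitably.

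For (ii), I would use the preceding step: $C(\mcQ,N)\supseteq\Psi_0+C$, because $\Psi_0+\Psi$ avoids every proper face for all $\Psi\in C$. The linear span of $\Psi_0+C$ contains $C$ and $\Psi_0$, so it equals the span of $C$, and the two dimensions agree. Alternatively, Proposition~\ref{prp:polar:dimension} compares the Euclidean closures directly.
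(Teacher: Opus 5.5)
Your arguments for (iii), for the openness half of (i), and for (ii) are fine. Your proof of (ii), via $\Psi_0+\overline{C(\mcQ,N)}\subseteq C(\mcQ,N)$ and a span count, is even more direct than the paper's appeal to the dimension drop of proper faces.

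The gap is in the density step. The claim $\Psi+t\Psi_0\to\Psi$ as $t\to 0^+$ is false for a general $\Psi_0\in C(\mcQ,N)$. For $N\ge 2$, the lexicographic order topology on $\R^N$ is discrete in the upper coordinates $a_N,\ldots,a_2$ and Euclidean only in the bottom coordinate $a_1$. For instance, with $N=2$ the interval $\bigl((0,-1)^T,(0,1)^T\bigr)=\{(0,x)^T\mid -1<x<1\}$ is an open neighborhood of $\mathbf 0$. It contains no vector $t\mathbf c$ with $t>0$ as soon as $c_2\neq 0$. Your leading-coordinate argument breaks at exactly this point. If $\mathbf b-\Psi(v)$ has its first nonzero entry in a low position, then a nonzero entry of $t\Psi_0(v)$ in a higher position overturns the comparison, however small $t$ is. So this is not a delicate corner case: convergence holds only when $\Psi_0$ vanishes outside the bottom row.

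Your fallback has the right shape, but Corollary~\ref{coro:elementary:moves} cannot produce such a $\Psi_0$. No elementary move alters the top row except by a positive rescaling and a constant shift, so a non-constant top row can never be removed.

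The missing idea is that regularity does not depend on $N$. Proposition~\ref{psp:regular} with $N=1$ gives some $\psi_0\in C(\mcQ,1)$. Let $\iota:M_{1,r}(\R)\to\MNaR$ be the embedding that places a row at the bottom with zeros above. It satisfies $\iota(\psi_0)\cdot\uu=(0,\ldots,0,\psi_0\cdot\uu)^T$, so $\iota(C(\mcQ,1))\subseteq C(\mcQ,N)$. Take $\Psi_0:=\iota(\psi_0)$. Your face-avoidance argument still gives $\Psi+t\Psi_0\in C(\mcQ,N)$ for all $t>0$. Now $\Psi+t\Psi_0$ differs from $\Psi$ only in the bottom row, where your leading-coordinate argument is valid, so $\Psi+t\Psi_0\to\Psi$.

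This is exactly how the paper argues. Its Claim uses $\iota$ to reduce neighborhoods of $\mathbf 0$ to the case $N=1$. It then translates by $\Psi$ using $\overline{C(\mcQ,N)}+C(\mcQ,N)\subseteq C(\mcQ,N)$.
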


\begin{proof}
The open cone $C(\mcQ,N)$ is obtained by removing all proper faces from $\overline{C(\mcQ,N)}$. By Proposition~\ref{prp:gkz:open:closed}  and Theorem~\ref{thm:gkz:correspondence}, there are only  finitely many of them. Moreover these faces are closed subsets in the product order topology and their union, being finite in number, is closed. Hence $C(\mcQ,N)$ is open in $\overline{C(\mcQ,N)}$. 

For the density, we start from the following claim.

\begin{claim}
For any neighborhood $U_{\mathbf{0}}$ of $\mathbf{0}$ in the product order topology, $U_{\mathbf{0}}\cap C(\mcQ,N)\neq\emptyset$.
\end{claim}

\begin{proof}
Consider the inclusion of vector spaces $\iota:M_{1,r}(\mathbb R)\hookrightarrow M_{N,r}(\mathbb R)$, $\Psi'\mapsto\Psi$, where $\Psi$ is obtained from $\Psi'$ by adding $(N-1)$-zero rows on the top of $\Psi'$. The notion of regularity is independent of $N$, so we know $C(\mcQ,1)\neq\emptyset$, and $\iota$ induces an inclusion $\iota: C(\mcQ,1)\hookrightarrow C(\mcQ,N)$. Further, for $1\leq \ell\leq r$, let $a'_\ell$, $b'_\ell\in\mathbb{R}$ be such that $a'_\ell<0<b'_\ell$. The product $(a_1',b_1')\times\ldots\times(a'_\ell,  b'_\ell)\subseteq M_{1,r}(\mathbb R)$ is a neighborhood of the zero matrix, and its image under $\iota$ is $(\mathbf a_1,\mathbf b_1)\times \ldots\times (\mathbf a_\ell,\mathbf b_\ell)$ where $\mathbf{a}_\ell=(0,\ldots,0,a_\ell)^t$, $\mathbf b_\ell=(0,\ldots,0,b_\ell)^t$ and $(\mathbf{a}_\ell,\mathbf{b}_\ell):=\{\mathbf{c}\in\mathbb{R}^N\mid \mathbf{a}_\ell<\mathbf{c}<\mathbf{b}_\ell\}$. This latter product is a neighborhood of $\mathbf{0}\in M_{N,r}(\mathbb R)$ in the product order topology. 

For a neighborhood $U_{\mathbf 0}$ of $\mathbf{0}\in M_{N, r}(\mathbb R)$ one can always find a product of intervals $(a_1,b_1)\times\ldots\times (a_\ell,b_\ell)\in M_{1, r}(\mathbb R)$ whose image under $\iota$ is contained in $U_{\mathbf 0}$. The map $\iota$ reduces hence the proof of the claim to the case $N=1$. But in this case the claim holds because the Euclidean and the product order topology coincide for $N=1$.
\end{proof}

Once this has been established, take any $\Psi\in \overline{C(\mcQ,N)}$ and any of its neighborhood $U$, we need to show that $U\cap C(\mcQ,N)\neq\emptyset$. Since the addition by a fixed matrix is a homeomorphism in the product order topology (Lemma \ref{Lem:TopGroup}), $U_{\mathbf{0}}:=U-\Psi$ is a neighborhood of $\mathbf{0}$. It follows from the above claim that $U_{\mathbf{0}}\cap C(\mcQ,N)\neq\emptyset$. As $\overline{C(\mcQ,N)}+C(\mcQ,N)\subseteq C(\mcQ,N)$, $U\cap C(\mcQ,N)\neq\emptyset$. This completes the proof of \emph{(i)}.

Each of the finite proper faces removed from $\overline{C(\mcQ,N)}$ has dimension strictly less than those of $\overline{C(\mcQ,N)}$ by Proposition \ref{prp:dimension:proper}. So it is clear that $C(\mcQ,N)$ has the same dimension as $\overline{C(\mcQ,N)}$ (compare with the proof of Proposition \ref{prp:cone:not:union}), which finishes the proof of \emph{(ii)}.

The last point \emph{(iii)} follows by Proposition \ref{prp:gkz:open:closed} and the fact that $\bfSigma_N$ is complete.
\end{proof}

\begin{remark}\rm
The first part of the corollary justifies the notation $\overline{C(\mcQ,N)}$ because the latter is the closure (in the product order topology) of ${C(\mcQ,N)}$.
\end{remark}

\begin{corollary}\label{cor:gkz:triangulation}
The closed cones $\overline{C(\mcQ,N)}$, where $\mcQ$ runs over all regular triangulations of $(P,A)$, cover $\MNaR$. These cones have all the same dimension, it is $N \cdot|A| = \dim\MNaR$.
\end{corollary}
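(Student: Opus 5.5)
The covering statement comes from the completeness of $\bfSigma_N$ (Theorem \ref{thm:gkz:correspondence}) together with the face description of the fan. Every regular subdivision is refined by a regular triangulation. The dimension statement comes from Corollary \ref{cor:gkz:dimension}, once we show that $L_{\mcC_\mcT}=\{\mathbf 0\}$ for a regular triangulation $\mcT$.

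\emph{Covering.} Take $\Psi\in\MNaR$. By Theorem \ref{thm:gkz:complete} and Proposition \ref{psp:regular}, $\Psi\in C(\mcQ,N)\subseteq\overline{C(\mcQ,N)}$ for some regular $\mcQ$. I claim that $\mcQ$ is refined by a regular triangulation $\mcT$.

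Since refinement of regular subdivisions is independent of $N$ (the cones $\mcC_\mcQ$ do not depend on $N$), it suffices to argue for $N=1$, i.e. in the classical secondary fan of \cite{GKZ}. There the maximal cones correspond to regular triangulations, the fan is complete, and every cone $\overline{C(\mcQ,1)}$ is a face of a maximal one.

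Alternatively, one can argue directly. By Corollary \ref{cor:open:partition}(i), $\overline{C(\mcQ,N)}$ is a face of some maximal-dimensional cone of the complete fan $\bfSigma_N$. Namely, a generic small perturbation of a point of $C(\mcQ,1)$ with pairwise generic heights lands in the open cone of a regular triangulation whose closed cone contains the original point, and one then embeds via $\iota$. The face description in Theorem \ref{thm:gkz:correspondence} then gives $\overline{C(\mcQ,N)}\subseteq\overline{C(\mcT,N)}$, so $\Psi$ lies in the cone of a regular triangulation.

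\emph{Dimension.} Let $\mcT$ be a regular triangulation. By Corollary \ref{cor:gkz:dimension}, $\dim\overline{C(\mcT,N)}=N\cdot\codim L_{\mcC_\mcT}$, so I must show $\dim\overline{C(\mcT,1)}=|A|$. It is enough to show that $C(\mcT,1)$, which is nonempty by regularity, contains an open Euclidean subset of $\R^A$.

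Take $\Psi\in C(\mcT,1)$. For a triangulation, each $A_Q$ is exactly the vertex set of the simplex $Q$, hence an affine basis, and the only generators of $\mcC_\mcT$ are $\uu_{v,\bfDelta_Q}$ with $v\notin A_Q$; no $\pm$ pairs occur. Therefore $P(\mcC_\mcT)$ is cut out by finitely many linear inequalities $\Psi\cdot\uu_{v,\bfDelta_Q}\le 0$.

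On $C(\mcT,1)$ these inequalities are all strict. Indeed, $g_{\Psi,\mcT}(v)>\Psi(v)$ for $v\in A\setminus A_\mcT$. For $v\in A_\mcT\setminus A_Q$, equality $\psi_Q(v)=\Psi(v)$ would mean that the domain of linearity containing $Q$ also contains $v$, by the convexity argument in Lemma \ref{lem:domain:linearity}. That domain would then be strictly larger than the simplex $Q$, contradicting $\mcD_{\Psi,\mcT}=\mcT$.

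Hence $\Psi$ is an interior point of a full-dimensional polyhedral cone, and the dimension is $|A|=r$. Multiplying by $N$ gives $N|A|=\dim\MNaR$.

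The main obstacle is the strictness step for $v\in A_\mcT\setminus A_Q$. It requires carefully relating equality $\psi_Q(v)=\Psi(v)$ to enlargement of a domain of linearity. If this step becomes delicate, I would fall back on the $N=1$ classical fact from \cite{GKZ} that secondary cones of regular triangulations are full-dimensional, and transport it via Corollary \ref{cor:gkz:dimension}.
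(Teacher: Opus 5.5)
Your argument is correct. For the covering you follow the paper: completeness of $\bfSigma_N$, plus the classical fact that a regular subdivision is refined by a regular triangulation, then Lemma~\ref{lem:Q:refine}(i). Your restriction to regular $\mcQ$ is the right form of that fact, since a non-regular triangulation is minimal and is refined by nothing else. You should drop the appeal to Corollary~\ref{cor:open:partition}(i) in the ``alternative'' paragraph, because that corollary says nothing about faces of maximal cones.

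The dimension count is where you genuinely differ. The paper quotes \cite[Chapter 7, Corollary 2.6]{GKZ} for $\dim\overline{C(\mathcal T,1)}=|A|$ and then applies Corollary~\ref{cor:gkz:dimension}. You instead prove this inside the paper's framework, which amounts to showing that $\mcC_{\mathcal T}$ is pointed. For a triangulation the paired generators $\pm\uu_{v,\bfDelta}$ are zero, so $\overline{C(\mathcal T,1)}$ is cut out by the inequalities $\Psi\cdot\uu_{v,\bfDelta_Q}\le 0$ with $v\notin A_Q$. At any point of the open cone $C(\mathcal T,1)$, which is non-empty by Proposition~\ref{psp:regular}, all these inequalities are strict.

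This removes the dependence on GKZ for the dimension; the covering still uses it. The price is one step you leave implicit. For $v\in A_{\mathcal T}\setminus A_Q$ you need $v\notin Q$, which follows from the marking compatibility $A_i\cap Q_i\cap Q_j=A_j\cap Q_i\cap Q_j$ in Definition~\ref{def:polytopal:subdivision}. Given that, $\Psi(v)=g_{\Psi,\mathcal T}(v)=\psi_Q(v)$ would put $v$ in $\{x\in P\mid g_{\Psi,\mathcal T}(x)=\psi_Q(x)\}$. By the proof of Lemma~\ref{lem:domain:linearity}, this set is the domain of linearity containing $Q$, namely $Q$ itself, which is a contradiction.
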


\begin{proof}
Each polytopal subdivision can be refined into a regular triangulation. Since $\bfSigma_N$ is complete, this implies that given $\Psi\in \MNaR$, there exists a regular triangulation $\mcQ$ such that $\Psi\in \overline{C(\mcQ,N)}$. It follows that the closed cones associated to a regular triangulation cover $\MNaR$.
In \cite[Chapter 7, Corollary 2.6]{GKZ} it is proved that $\dim\overline{C(\mcQ,1)} = |A|$ for a regular triangulation $\mcQ$. The formula for an arbitrary $N$ follows from Corollary \ref{cor:gkz:dimension}.
\end{proof}

\begin{remark}\label{remark:generic:matrix}\rm
For $N=1$ we are back in the case studied by Gelfand, Kapranov and Zelevinsky, where the Euclidean topology coincides with the order topology. In particular, they show (and it is also a consequence of Corollary~\ref{cor:gkz:triangulation}), that a \emph{``generic element''} $\Psi\in M_{1,r}(\mathbb R)$ is an element in $C(\mcQ,1)$ for a regular triangulation.

This holds also for $N>1$ if one uses the term \emph{``generic element''} with respect to Euclidean topology.
Indeed, let $\mcQ$ be a polyhedral subdivision which is not a regular triangulation. By Proposition~\ref{prp:gkz:as:polar}, $\overline{C(\mcQ,N)}$ is a $\mu$-polyhedral cone in $M_{N,r}(\mathbb R)$. So its closure with respect to the Euclidean topology is by roposition~\ref{prp:polar:dimension} a polyhedral cone in  $M_{N,r}(\mathbb R)$, which is of dimension strictly less than $N\cdot |A|$. It follows that there exists an open and dense subset $U$ (in the Euclidean topology) in $M_{N,r}(\mathbb R)$, such that $\Psi\in M_{N,r}(\mathbb R)$ generic (i.e. $\Psi\in U$) implies $\Psi$ is an element in $C(\mcQ,N)$ for a regular triangulation.
\end{remark}

\begin{corollary}\label{cor:minimal:gkz_cone}
The polytopal subdivision $\mathcal P=(P,A)$ of $(P,A)$ is regular. The closed GKZ-cone $\overline{C(\mathcal P,N)}$ is the unique minimal cone in the fan $\bfSigma_N$. It is equal to the open GKZ-cone $C(\mathcal P, N)$, which is a vector subspace of $\MNaR$ of dimension $N\cdot n$.
\end{corollary}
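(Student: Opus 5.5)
The plan is to compute the GKZ-condition-cone $\mcC_{\mathcal P}$ of the trivial subdivision $\mathcal P=(P,A)$ explicitly, and then read off everything from the polar correspondence. Since $\mathcal P$ consists of the single polytope $P$ with marking $A_P=A$, we have $A\setminus A_P=\varnothing$. So by Definition~\ref{def:condition:cone} the cone $\mcC_{\mathcal P}$ is generated by the vectors $\pm\uu_{v,\bfDelta}$ with $v\in A$ and $\bfDelta\subseteq A$ an affine basis. Hence $\mcC_{\mathcal P}$ is a linear subspace of $\R^A$. Consequently its only co-face is itself: $\mcC_{\mathcal P}+\R\cdot\uu=\mcC_{\mathcal P}$ for every $\uu\in\mcC_{\mathcal P}$.

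For regularity, every polytopal subdivision $\mcQ$ refines $\mathcal P$, so no $\mcQ'>\mathcal P$ exists. The condition in Definition~\ref{def:regular:condition:cone} is then vacuous, and $\mathcal P$ is regular. By Proposition~\ref{prp:gkz:open:closed} we get $C(\mathcal P,N)=\overline{C(\mathcal P,N)}$, since the union being removed is empty.

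For minimality, I would use Theorem~\ref{thm:gkz:correspondence}. Every regular $\mcQ$ refines $\mathcal P$, so $\overline{C(\mathcal P,N)}$ is a face of every cone of $\bfSigma_N$ (equivalently, by Lemma~\ref{lem:Q:refine}(i), it is contained in every cone). Hence it is the unique minimal cone. For the vector-space claim, Proposition~\ref{prp:gkz:as:polar} gives $\overline{C(\mathcal P,N)}=P(\mcC_{\mathcal P})$. Because $\mcC_{\mathcal P}=-\mcC_{\mathcal P}$, any $\Psi$ in the polar satisfies both $\Psi\cdot\uu\le\mathbf 0$ and $\Psi\cdot(-\uu)\le\mathbf 0$, so $\Psi\cdot\uu=\mathbf 0$. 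Thus $P(\mcC_{\mathcal P})=\{\Psi\mid \Psi\cdot\uu=\mathbf 0\ \forall\uu\in\mcC_{\mathcal P}\}$, which is a linear subspace.

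The main step is the dimension count, which I would do directly. The subspace consists of the matrices $\Psi$ whose columns $v\mapsto\Psi(v)$ extend to a linear map $\R^n\to\R^N$; this is exactly the content of the first step in the proof of Proposition~\ref{prp:gkz:as:polar} applied with $Q=P$. Since $A$ spans $\R^n$ (its affine span is $\R^{n-1}$), restriction gives an isomorphism $\mathrm{Hom}(\R^n,\R^N)\to\overline{C(\mathcal P,N)}$, $\varphi\mapsto(\varphi(v))_{v\in A}$. It is injective because $A$ spans $\R^n$, and surjective by the extension property. Hence the dimension is $N\cdot n$. As a cross-check, Corollary~\ref{cor:gkz:dimension} gives $N\codim L_{\mcC_{\mathcal P}}$, and $L_{\mcC_{\mathcal P}}=\mcC_{\mathcal P}$ is the space of affine dependencies of $A$, which has dimension $r-n$; so the codimension is $n$, which agrees. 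The only delicate point is justifying that the linear extension on all of $K(P)$ is forced: the piecewise-linear map $g$ is linear on the single cone $K(P)$, and $g(v)=\Psi(v)$ for all $v\in A$ since $A_{\mathcal P}=A$.
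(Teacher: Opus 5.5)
Your proposal is correct and follows essentially the same route as the paper. Regularity is vacuous, and $C(\mathcal P,N)=\overline{C(\mathcal P,N)}$ follows from Proposition~\ref{prp:gkz:open:closed}. The subspace property comes from $\mcC_{\mathcal P}=-\mcC_{\mathcal P}$, and the dimension from the fact that $g_{\Psi,\mathcal P}$ is a linear map whose values on an affine basis can be chosen freely; your isomorphism with $\mathrm{Hom}(\R^n,\R^N)$ is just that statement. Your explicit use of Theorem~\ref{thm:gkz:correspondence} for minimality and the cross-check via Corollary~\ref{cor:gkz:dimension} are sound additions, not a different approach.
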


\begin{proof}
The subdivision $\mathcal P= (P,A)$ is regular by definition since $\mathcal P$ does not refine any other subdivision. For the same reason $V = \overline{C(\mathcal P,N)}$ is the minimal cone in $\bfSigma_N$. In particular $C(\mathcal P, N) = V$ by Proposition \ref{prp:gkz:open:closed}.

The GKZ-condition cone $\mcC_{\mathcal P}$ has generators given by pairs of vectors $\pm\uu_{v,\bfDelta}$ since $A_P=A_{\mathcal P} = A$. This proves that $V$ is a vector subspace of $\MNaR$.

For its dimension, fix an affine basis $\bfDelta\subseteq A$ and note that a function $g_{\Psi,\mathcal P}$ with $\Psi\in V$ is completely determinated by its values on $\bfDelta$. Moreover such values can be  chosen freely. This proves that $\dim V = N\cdot n$.
\end{proof}

\section{Toric varieties and quasi-valuations arising from GKZ-cones}\label{sec:quasi-valuations}

\emph{Throughout the rest of the article we fix $\mathbb{K}$ to be an algebraically closed field of characteristic zero.}

In this section, we start to investigate an application of the higher rank GKZ-cones in the study of semi-toric degenerations (a flat degeneration into a finite union of toric varieties) of the toric variety associated to the polytope $P$ and an embedding arising from the markings in $A$. Some special examples of such degenerations have been studied by the authors from an algebro-geometric point of view under the framework of Seshadri stratifications in \cite{CCFL}.

\subsection{The toric variety $X_{A}$ and an embedding}\label{subsec:toric-variety}

Our standard reference for toric varieties is \cite{CLS}.

Let $T\simeq (\mathbb K^*)^{n-1}$ be a torus with character lattice $M\simeq \mathbb Z^{n-1}$ and dual lattice $N$. We write $\langle\cdot ,\cdot \rangle:N\times M\to\mathbb{Z}$
for the non-degenerate pairing defined by: $\langle\eta,\mu\rangle\in \mathbb Z$ is the unique integer such that $\mu(\eta(s))=s^{\langle\eta,\mu\rangle}$ for all $s\in\mathbb K^*$.

As in Section~\ref{subsec:polytopes:functions}, let $A=\{\chi_1,\ldots,\chi_r\}\subseteq M$ be a finite subset and let $P\subseteq M_\mathbb R:=M\otimes_{\mathbb Z}\mathbb R$ be the convex hull of $A$. We assume without loss of generality that $P$  is a full dimensional polytope. Let $\{e_\chi\mid \chi\in A\}\subseteq \mathbb K^A$  be the standard basis of $V:=\mathbb K^A$. 

\begin{definition}{\cite[Definition 2.1.1]{CLS}}\label{definition:toric:variety}
The \textit{embedded toric variety $X_{A}\subseteq \mathbb P(V)$} is defined as the Zariski closure of the image of the  map 
$\iota$:
\begin{equation}\label{embedding:toric}
\iota:T \rightarrow \mathbb P(V),\quad t \mapsto \left[\sum_{\chi\in A} \chi(t) e_\chi\right].
\end{equation}
\end{definition}

Note that the variety  $X_{A}$ is not necessarily normal, and it is a toric variety in the usual sense for a torus which might be a finite quotient of $T$. Denote by $\hat X_{A}\subseteq V$ the affine cone over  $X_{A}$. Let $\hat T$ be the torus $\mathbb K^*\times T$ with character lattice $\mathbb Z\times M$. Let  $\hat\iota:\hat T\rightarrow V$ be the map $(c,t)  \mapsto \sum_{\chi\in A} \chi(t) ce_\chi$. Then $\hat X_{A}$ is the closure of the image of $\hat\iota$.

We write $x_{\chi}$ for the linear function dual to $e_\chi$, $\chi\in A$. The ring of polynomial functions on $V=\mathbb K^A$ is identified with the polynomial ring $\mathbb{K}[x_{\chi_1},\ldots,x_{\chi_r}]$.

The homogeneous coordinate ring $\mathbb K[X_{A}]$ of the embedded variety $X_{A}\subseteq \mathbb P(V)$ and the coordinate ring $\mathbb K[\hat X_{A}]$ of the affine cone $\hat X_{A}$ are the same rings. Since we often work with $\hat X_A$, we stick to the notation $\mathbb K[\hat X_A]$. The variety $\hat X_{A}\subseteq V$ is irreducible. It follows that $\mathbb K[\hat X_A]$ is an integral domain which is naturally graded: 
$$\mathbb K[\hat X_A]=\bigoplus_{m\ge 0}\mathbb K[\hat X_A]_m.$$

Let $K(P)\subseteq \mathbb R\oplus M_\mathbb R\simeq \R^n$ be the cone over the polytope $P$. To avoid double indexes later, we proceed as in Section~\ref{subsec:polytopes:functions}:
\par\vskip 5pt
\noindent{\bf Notation:}
We write $u,v,\ldots$ for elements in $\R^{n-1}$ and $\R^n$. Only the elements of $A$ are  denoted by greek letters $\chi$, both as elements of $\R^{n-1}$  and as elements of  $\R^n$, it will be clear from the context where they are.
\par\vskip 5pt
Let $S\subseteq K(P)$ be the monoid generated by $A$, we have $S\subseteq K(P)\cap (\mathbb Z\times M)$. Each $u\in S$ can be written as a non-negative integral linear combination $u= \alpha_1\chi_1+\ldots +\alpha_r\chi_r$ of elements in $A$. The monomial $x_{\chi_1}^{\alpha_1}\cdots x_{\chi_r}^{\alpha_r}$ in $\mathbb K[V]$ is a homogeneous $\hat T$-eigenvector of degree $d_u:=\alpha_1+\ldots+\alpha_r$. Its restriction $f_u=x_{\chi_1}^{a_1}\cdots x_{\chi_r}^{a_r}\vert_{\hat X_{A}}\in \mathbb K[\hat X_A]$ is non-zero; it is a $\hat T$-eigenvector of degree $d_u$ and associated to the $T$-character $\eta_u=\alpha_1\chi_1+\ldots+\alpha_r\chi_r$. Only if we want to refer to the degree and the character associated to $f_u$ we write $u=\alpha_1(1,\chi_1)+\ldots+\alpha_r(1,\chi_r)=(d_u,\eta_u)$.

The following is well known, see \cite{CLS}.
\begin{lemma}\label{explicit:basis:two:two}
\begin{itemize}
\item[{\rm (i)}] The set $\{f_{u}\mid u\in S \}$ forms a basis for $\mathbb K[\hat X_A]$. The functions $f_u$
depend only on $u$ and not on the decomposition $u= \alpha_1\chi_1+\ldots +\alpha_r\chi_r$ of $u$.
\item[{\rm (ii)}] The following functions linearly span the vanishing ideal $I_A\subseteq \mathbb K[V]$ of  $ X_{A}\subseteq \mathbb P(V)$:
$$
\left\{x_{\chi_1}^{\beta_1}\cdots x_{\chi_r}^{\beta_r} -x_{\chi_1}^{\alpha_1}\cdots x_{\chi_r}^{\alpha_r}\,\left\vert\, 
\begin{array}{c}
u \in S, \quad x_{\chi_1}^{\beta_1}\cdots x_{\chi_r}^{\beta_r} \not=x_{\chi_1}^{\alpha_1}\cdots x_{\chi_r}^{\alpha_r}\\
\sum_{i=1}^r \beta_i \chi_i=(d_u,\eta_u)=\sum_{i=1}^r \alpha_i \chi_i. \\
\end{array}\right.\right\}
$$
\end{itemize}
\end{lemma}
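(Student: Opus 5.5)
The plan is to pass through the polynomial ring and the torus $\hat T$. Let $\pi:\mathbb K[V]=\mathbb K[x_{\chi_1},\ldots,x_{\chi_r}]\to\mathbb K[\hat X_A]$ be the restriction map; it is surjective with kernel $I_A$. The comorphism of $\hat\iota:\hat T\to V$ gives a ring map $\hat\iota^*:\mathbb K[V]\to\mathbb K[\hat T]=\mathbb K[\mathbb Z\times M]$. Since $\hat X_A$ is the Zariski closure of the image of $\hat\iota$, we have $\ker\hat\iota^*=I_A$. Hence $\hat\iota^*$ induces an injection $\mathbb K[\hat X_A]\hookrightarrow\mathbb K[\mathbb Z\times M]$. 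Explicitly,
\[
\hat\iota^*\bigl(x_{\chi_1}^{\alpha_1}\cdots x_{\chi_r}^{\alpha_r}\bigr)=c^{\alpha_1+\cdots+\alpha_r}\,\chi_1^{\alpha_1}\cdots\chi_r^{\alpha_r}=\mathrm{e}^{(d_u,\eta_u)}=\mathrm{e}^{u},
\]
where $\mathrm{e}^u$ is the character of $\hat T$ attached to $u=\sum\alpha_i(1,\chi_i)$. So the image of a monomial depends only on $u$. Since $\hat\iota^*$ is injective modulo $I_A$, the function $f_u$ is independent of the chosen decomposition of $u$, which is the second claim of (i).

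For the basis claim in (i), I will argue as follows. The image of $\mathbb K[\hat X_A]$ is spanned by the images of the monomials, i.e. by the characters $\mathrm{e}^u$ with $u\in S$. Distinct characters of a torus are linearly independent in $\mathbb K[\hat T]$, since they form the standard basis of the group algebra $\mathbb K[\mathbb Z\times M]$. Hence $\{\mathrm{e}^u\mid u\in S\}$ is a basis of the image, and pulling back along the isomorphism onto the image shows that $\{f_u\mid u\in S\}$ is a basis of $\mathbb K[\hat X_A]$. Each $f_u$ is nonzero because $\mathrm{e}^u\neq0$.

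For (ii), binomials $x^\beta-x^\alpha$ with $\sum\beta_i(1,\chi_i)=\sum\alpha_i(1,\chi_i)$ clearly map to $\mathrm{e}^u-\mathrm{e}^u=0$, so they lie in $I_A$. For the converse, take $F\in I_A$ and group its monomials according to the element $u\in S$ they map to: $F=\sum_{u}F_u$, where $F_u=\sum_{\alpha\mapsto u}c_\alpha x^\alpha$. Then $\hat\iota^*(F)=\sum_u\bigl(\sum_{\alpha\mapsto u}c_\alpha\bigr)\mathrm{e}^u=0$. Linear independence of characters forces $\sum_{\alpha\mapsto u}c_\alpha=0$ for each $u$. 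Fix one monomial $x^{\alpha_0}$ in each class; then
\[
F_u=\sum_{\alpha\mapsto u} c_\alpha\bigl(x^\alpha-x^{\alpha_0}\bigr),
\]
a linear combination of the listed binomials, which proves (ii).

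The only genuinely nontrivial input is the identification $\ker\hat\iota^*=I_A$, i.e. that the ideal of the closure of the image equals the kernel of the comorphism. This is standard: a polynomial vanishes on a closure if and only if it vanishes on the dense image. Homogeneity of $I_A$ (the projective versus affine cone issue) is automatic here, since the $\mathbb K^*$-factor of $\hat T$ records the degree. The rest is bookkeeping with characters.
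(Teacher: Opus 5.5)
Your proof is correct and complete. The paper gives no proof of its own and cites \cite{CLS}, where the argument is the one you give: identify $\mathbb K[\hat X_A]$ with the image of $\hat\iota^*$ in the group algebra $\mathbb K[\mathbb Z\times M]$, then use linear independence of characters to group the monomials of a kernel element by their common exponent $u\in S$. Your observation that $\ker\hat\iota^*$ is automatically homogeneous, because the $\mathbb K^*$-factor records degree, is exactly what identifies it with the projective vanishing ideal $I_A$.
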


%
%

\subsection{Generalities on quasi-valuations and examples}\label{some:generalities:quasi:valuation}

For $N\ge 1$ consider the additive group $(\mathbb Q^N,+)$, endowed with the lexicographic order. The order is compatible with the group structure of $(\mathbb Q^N,+)$ and multiplication by positive rational numbers. To simplify the notation, we often tacitly add an element $\infty$ to the ordered group $\mathbb{Q}^N$ which is assumed to be larger than any other element, and any element, when added to $\infty$, equals $\infty$.

Let $\mathfrak R$ be a finitely generated integral reduced algebra.

\begin{definition}\label{defn:quasi:valuation}
A \emph{quasi-valuation} on $\mathfrak R$ with values in the ordered group $(\mathbb Q^N,+)$ is a map $\nu: \mathfrak R\to \mathbb Q^N\cup\{\infty\}$ having the following properties for any $x,y\in \mathfrak R$ and $\lambda\in\mathbb{K}^*$:
\begin{enumerate}
\item[(0)] $\nu(x) = \infty$ if and only if $x=0$;
\item[(1)] $\nu(x + y) \ge \min\{\nu(x), \nu(y)\}$ (minimum property);
\item[(2)]  $\nu(\lambda x) = \nu(x)$;
\item[(3)]  $\nu(xy) \ge \nu(x) + \nu(y)$ (super-additivity).
\end{enumerate}
\end{definition}

Property (3) implies $\nu(x^m)\ge m\nu(x)$ for $m\in \mathbb N$. The quasi-valuation $\nu$ is called \emph{radical} if $\nu(x^m)=m\nu(x)$ for all $m\in\mathbb N$.  If $\nu$ is always additive, i.e.  $\nu(xy)= \nu(x) + \nu(y)$ for all $x,y \in \mathfrak R$, then it is called a \emph{valuation}.
\vskip 5pt
\textit{Throughout the following we assume the quasi-valuation to be \emph{discrete}, \textit{i.e.}, the image, without $\infty$, is a discrete subset of $\mathbb Q^N$. }
\vskip 5pt
\begin{example}\label{example:homomorphism:valuation1}
Let $X_A\subseteq \mathbb P(V)$ be a toric variety as in Section~\ref{subsec:toric-variety} and let
$\phi: \mathbb Q\oplus M_\mathbb Q\rightarrow \mathbb Q^N$ be a $\mathbb{Q}$-linear map. We use $\phi$ to define a valuation $\nu_{\phi}:\mathbb K[\hat X_A]\to\mathbb{Q}^N\cup\{\infty\}$. Indeed, we set $\nu_\phi(0)=\infty$, and for $f=\sum_{u\in S} a_{u}f_{u}\neq 0$ in $\mathbb K[\hat X_A]$ we define:
$$\nu_\phi(f):=\min\{\phi(u)\mid a_{u}\not=0\}.$$
It is now easy to check that $\nu_\phi$ is not affected by (non-zero) scalar multiplication, 
it is additive and has the minimum property, \emph{i.e.} it is a valuation.
\end{example}

\begin{example}\label{example:quasi:valuation1}
Let $X_A\subseteq \mathbb P(V)$ be a toric variety as in Section~\ref{subsec:toric-variety} and let $\phi_1,\ldots,\phi_p$ be $\mathbb{Q}$-linear maps $\phi_j:\mathbb Q\oplus M_\mathbb Q\rightarrow \mathbb Q^N$,  $j=1,\ldots,p$. Using Example~\ref{example:homomorphism:valuation1} we set:
$$\mathcal V_{\phi_1,\ldots,\phi_p}:\mathbb K[\hat X_{A}]\rightarrow \mathbb Q^N\cup\{\infty\},\quad f\mapsto \min\{\nu_{\phi_1}(f),\ldots,\nu_{\phi_p}(f)\}.$$
It is well known that the minimum over a finite collection of valuations is a quasi-valution (see, for example \cite[Proposition 4.1]{FL}). The quasi-valuation is radical by construction.
\end{example}

\begin{remark}\rm
There is no standard terminology in the literature for the names given to quasi-valuations and its special properties. Quasi-valuations are called \emph{pseudo-valuations} \cite{B}, and they are a special case of \emph{loose valuations} \cite{T}. The name \emph{homogeneous} is sometimes used as a synonym for what is called \emph{radical} above.
\end{remark}

Quasi-valuations are closely related to filtrations on algebras. Given a quasi-valuation $\nu:\mathfrak R\rightarrow \mathbb Q^N\cup\{\infty\}$ on the $\mathbb K$-algebra $\mathfrak R$, we can define the associated filtration $\mathcal F_\nu=\{\mathcal F_{\ge \mathbf{a}}\}_{\mathbf{a}\in\mathbb Q^N}$ by subspaces:
$$\mathcal F_{\ge \mathbf a}= \{f\in \mathfrak R_{}\mid \nu(f)\ge \mathbf{a}\}\ \ \text{and}\ \ 
\mathcal F_{> \mathbf a}= \{f\in \mathfrak R_{}\mid \nu(f)> \mathbf{a}\}.$$
Since $\nu$ is a quasi-valuation, we have $\mathcal F_{\ge \mathbf a}\cdot \mathcal F_{\ge \mathbf a'}\subseteq \mathcal F_{\ge \mathbf a+\mathbf a'}$. The subquotients $\mathcal F_{\ge \mathbf a}/\mathcal F_{> \mathbf a}$, $\mathbf{a}\in\mathbb Q^N$, are called \emph{leaves} of the filtration. 

\begin{definition}\label{definition:associated:graded:algebra}\rm
The \emph{associated graded algebra} is denoted $\mathrm{gr}_\nu\mathfrak R
=\bigoplus_{\mathbf a\in\mathbb{Q}^N}\mathcal F_{\ge\mathbf a}/\mathcal F_{>\mathbf a}$.
\end{definition}

\subsection{Quasi-valuations from $\mathcal{Q}$-piecewise-linear maps}\label{quasi-valuation:one}

Throughout the following sections, in most of the cases, only \emph{rational} matrices will be considered. We keep the notation as in Section~\ref{subsec:toric-variety}, so $(P,A)$ is a marked polytope with $A=\{\chi_1,\ldots,\chi_r\}$. 

For a matrix $\Psi\in M_{N,r}(\mathbb Q)$ let $\mathcal Q=(Q_i,A_i)_{i\in I}$ be the regular polytopal subdivision of $(P,A)$ such that $\Psi\in C(\mathcal Q,N)$ (such a subdivision exists by Theorem~\ref{thm:gkz:complete}). Let $g_{\Psi}:K(P)\rightarrow \mathbb R^N$ be the unique associated piecewise-linear map defined on $K(P)$ (see Theorem~\ref{thm:gkz:complete}, and Corollary~\ref{cor:open:partition}
for the existence and uniqueness, see \eqref{definition:gPsi} for the definition).

By Proposition~\ref{prop:convex:up:=:minimum}, $g_{\Psi}$ is the minimum
over a finite number of linear maps $\psi_Q:\mathbb R^n\rightarrow \mathbb R^N$, where $Q$ is running in the set of polytopes in the polytopal subdivision $\mathcal Q$. By assumption, $\Psi$ has entries in $\mathbb{Q}$, so by construction
we get for every marked polytope $(Q,A_Q)$ in the polytopal subdivision $\mathcal Q$ an induced linear map $\psi_Q:\mathbb Q^n=\mathbb Q\oplus M_\mathbb Q\rightarrow \mathbb Q^N$ which is uniquely determined by the condition: if $\chi_j\in A_Q$, $1\le j\le r$, then $\psi_Q(\chi_j)=\Psi_j$, the $j$-th column of $\Psi$.

The semigroup $S$ is an indexing system for the  basis $\{f_{u}\mid u\in S \}$ of the homogeneous coordinate ring $\mathbb K[\hat X_A]$ (see Lemma~\ref{explicit:basis:two:two}). By construction we have $S\subseteq K(P)$. It follows from Example~\ref{example:quasi:valuation1}:

\begin{proposition}\label{proposition:quasi:valuation}
The map $\mathcal V_\Psi: \mathbb K[\hat X_A]\rightarrow \mathbb Q^N\cup\{\infty\}$ defined  for $f=\sum_{u\in S} a_{u}f_{u}\neq 0$ by  
\begin{equation}\label{quasi-one}
\mathcal V_\Psi(f):= \min\{\psi_Q(u)\mid Q\in\mathcal Q,  a_{u}\not=0\}= \min\{g_{\Psi}(u)\mid  a_{u}\not=0\},
\end{equation}
is a radical quasi-valuation.
\end{proposition}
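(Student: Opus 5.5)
The plan is to reduce the statement to Example~\ref{example:quasi:valuation1}, applied to the finite family of linear maps $\{\psi_Q\mid Q\in\mathcal Q\}$. Each $\psi_Q$ is a $\mathbb Q$-linear map $\mathbb Q\oplus M_\mathbb Q\to\mathbb Q^N$ because $\Psi$ has rational entries. By Example~\ref{example:homomorphism:valuation1}, each $\nu_{\psi_Q}(f)=\min\{\psi_Q(u)\mid a_u\neq 0\}$ is a valuation on $\mathbb K[\hat X_A]$.

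\textbf{Step 1: the two formulas agree.} Since both minima are over finite sets in a totally ordered set, they can be exchanged:
\[
\min_{Q}\nu_{\psi_Q}(f)=\min_{Q}\min_{a_u\neq0}\psi_Q(u)=\min_{a_u\neq 0}\min_Q\psi_Q(u).
\]
By Proposition~\ref{prop:convex:up:=:minimum}, $g_\Psi(u)=\min_Q\psi_Q(u)$ for every $u\in K(P)$. This applies because $g_\Psi=g_{\Psi,\mathcal Q}$ is $\mathcal Q$-piecewise-linear and convex upward, by Theorem~\ref{thm:gkz:complete}, and because $S\subseteq K(P)$. Hence the two expressions in \eqref{quasi-one} coincide, and $\mathcal V_\Psi=\mathcal V_{\psi_{Q_1},\ldots,\psi_{Q_p}}$ in the notation of Example~\ref{example:quasi:valuation1}.

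\textbf{Step 2: the quasi-valuation axioms.} These now follow from that example.
\begin{itemize}
\item[(0)] For $f\neq0$ some $a_u\neq 0$, so $\mathcal V_\Psi(f)$ is finite.
\item[(2)] Scaling by $\lambda\in\mathbb K^*$ does not change the support of $f$.
\item[(1)] The support of $f+h$ is contained in the union of the supports of $f$ and $h$.
\item[(3)] Fix $Q$ attaining the minimum for $fh$. Then $\mathcal V_\Psi(fh)=\nu_{\psi_Q}(fh)=\nu_{\psi_Q}(f)+\nu_{\psi_Q}(h)\ge\mathcal V_\Psi(f)+\mathcal V_\Psi(h)$.
\end{itemize}
Additivity of $\nu_{\psi_Q}$ is the only nontrivial input in (3). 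It holds because $\psi_Q$ is linear and $\mathbb K[\hat X_A]$ is an integral domain graded by $S$ with $f_uf_{u'}=f_{u+u'}$. Indeed, the $\psi_Q$-initial parts of $f$ and $h$ multiply to a nonzero element whose terms all attain the value $\nu_{\psi_Q}(f)+\nu_{\psi_Q}(h)$, and no term of the full product can go lower. Discreteness also holds: the values lie in $\psi_Q(S)$, which is contained in a lattice $\frac1d\mathbb Z^N$ since $S$ is finitely generated and $\Psi$ is rational.

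\textbf{Step 3: radicality.} This is the step needing care. Super-additivity gives $\mathcal V_\Psi(f^m)\ge m\mathcal V_\Psi(f)$. For the reverse inequality, each $\nu_{\psi_Q}$ is a valuation, so $\nu_{\psi_Q}(f^m)=m\,\nu_{\psi_Q}(f)$. Taking the minimum over $Q$ and using that multiplication by $m>0$ preserves the lexicographic order gives
\[
\mathcal V_\Psi(f^m)=\min_Q m\,\nu_{\psi_Q}(f)=m\,\mathcal V_\Psi(f).
\]
The main obstacle is thus the additivity of each $\nu_{\psi_Q}$ with respect to a lexicographic (non-archimedean) order. I would handle it by the initial-form argument above, which uses only that the order on $\mathbb Q^N$ is total and compatible with addition.
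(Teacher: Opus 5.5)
Your proposal is correct and follows the paper's route. The paper also obtains $\mathcal V_\Psi$ as the minimum of the finitely many valuations $\nu_{\psi_Q}$ (Example~\ref{example:quasi:valuation1}), and gets the second expression in \eqref{quasi-one} from Proposition~\ref{prop:convex:up:=:minimum} together with $S\subseteq K(P)$; you just write out the details the paper leaves as easy or cites (additivity of each $\nu_{\psi_Q}$ via initial forms, discreteness, and radicality from $\nu_{\psi_Q}(f^m)=m\,\nu_{\psi_Q}(f)$).
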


The convex upward condition on $g_{\Psi}$ (see \eqref{definition:convex}) turns into the super-additivity property for $\mathcal V_\Psi$. The minimum property of the quasi-valuation can be made more precise. For $f=\sum_{u\in S} a_{u}f_{u}\neq 0$ denote by $\Delta_f\subseteq \mathbb R\oplus M_\mathbb R$ the polytope obtained as the convex hull of $\{u\mid a_{u}\not=0\}$. Let $V_f$ be the set of vertices of $\Delta_f$.

\begin{lemma}\label{lemma:extremal:value}
For $f=\sum_{u\in S} a_u f_{u}\in  \mathbb K[\hat X_A]\setminus\{0\}$ one has
$$\mathcal V_\Psi(f)= \min\{g_\Psi(u)\mid u\in \Delta_f\}= \min\{\mathcal V_\Psi(f_{u})\mid u\in V_f\}.$$
\end{lemma}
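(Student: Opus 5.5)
We prove the chain by showing three inequalities. Note first that by definition \eqref{quasi-one}, $\mathcal V_\Psi(f)=\min\{g_\Psi(u)\mid a_u\neq 0\}$. Since $f_u$ is a single basis element, $\mathcal V_\Psi(f_u)=g_\Psi(u)$. Every $u$ with $a_u\neq 0$ lies in $\Delta_f$, and $V_f\subseteq\{u\mid a_u\ne 0\}$. The vertices of the convex hull of a finite set belong to that set. Hence we get immediately
\[
\min\{g_\Psi(u)\mid u\in\Delta_f\}\le \mathcal V_\Psi(f)\le \min\{g_\Psi(u)\mid u\in V_f\}=\min\{\mathcal V_\Psi(f_u)\mid u\in V_f\}.
\]
The minimum over the infinite set $\Delta_f$ still has to be shown to exist; this comes out of the next step.

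It remains to show that for every $w\in\Delta_f$ there is a vertex $u\in V_f$ with $g_\Psi(u)\le g_\Psi(w)$. This also shows the minimum over $\Delta_f$ is attained, at a vertex. Write $w=\sum_{u\in V_f}t_u u$ with $t_u\ge0$ and $\sum t_u=1$, which is possible because $\Delta_f$ is a polytope and hence the convex hull of its vertices. The map $g_\Psi$ is $\mcQ$-piecewise-linear and convex upward by Theorem~\ref{thm:gkz:complete} and Proposition~\ref{prop:convex:up:=:minimum}. So Remark~\ref{convex:equivalent}, together with homogeneity for nonnegative scalars, gives
\[
g_\Psi(w)\ \ge\ \sum_{u\in V_f} t_u\, g_\Psi(u).
\]
Alternatively, we can use $g_\Psi=\min_Q\psi_Q$. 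Choose $Q$ with $g_\Psi(w)=\psi_Q(w)=\sum t_u\psi_Q(u)\ge \sum t_u g_\Psi(u)$. This version avoids any continuity issues.

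We then need the elementary fact that in $\mathbb R^N$ with the lexicographic order, a convex combination of vectors is $\ge$ their minimum. This holds because the order is compatible with addition and with multiplication by nonnegative scalars. If $\mathbf m=\min_u g_\Psi(u)$, then $t_u g_\Psi(u)\ge t_u\mathbf m$ for each $u$, and summing gives $\sum t_u g_\Psi(u)\ge \mathbf m$. So $g_\Psi(w)\ge \min_{u\in V_f}g_\Psi(u)$, which closes the chain into equalities.

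No step is genuinely hard. The only points needing care are these. First, the minima are taken in the lexicographic order, so we must check that the usual convexity argument only uses order compatibility with positive scaling and addition, which it does. Second, the minimum over the infinite set $\Delta_f$ must be shown to exist rather than assumed; the argument above shows it is attained at a vertex.
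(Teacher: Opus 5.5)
Your proof is correct and follows the paper's argument: you write a point of $\Delta_f$ as a convex combination of the vertices in $V_f$, then use the convex upward property of $g_\Psi$ together with the compatibility of the lexicographic order with addition and nonnegative scaling. You are slightly more thorough than the paper, which only treats points $u$ with $a_u\neq 0$ and leaves implicit the identical argument for arbitrary $w\in\Delta_f$, which is needed for the middle equality.
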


\begin{proof}
If $a_u\not=0$ and $u$ is not a vertex of $\Delta_f$, then write $u=\sum_{w\in V_f} b_{w} w$ as a convex linear combination of the vertices.  
The convex upward property \eqref{definition:convex} implies:
$$\begin{array}{rcl}
g_{\Psi}(u)=g_{\Psi}(\sum_{w\in V_f} b_{w} w)
\ge \sum_{w\in V_f} b_{w} g_{\Psi}(w)
&\ge&\min\{g_{\Psi}(w)\mid w\in V_f\}\\
&=&\min\{\mathcal V_\Psi(f_{w})\mid w\in V_f\},
\end{array}$$
which implies the claim.
\end{proof}

\section{Semi-toric degenerations from GKZ-cones}\label{two:graded:algebras}

Given $\Psi\in M_{N,r}(\mathbb Q)$, let $\mathcal Q$ be such that $\Psi\in C(\mathcal Q,N)$, we have the associated graded algebra $\mathrm{gr}_{\mathcal V_\Psi} \mathbb K[\hat X_A]$. By Lemma~\ref{lemma:extremal:value}, it has the classes $\{\bar f_u\mid u\in S\}$ as a vector space basis. For a polytope $Q\in\mathcal Q$ let $S_Q=S\cap K(Q)$ be the intersection of $S$ with the cone $K(Q)$ over $Q$. The goal of this section is to study the geometry of these associated graded algebras.

\begin{theorem}\label{coro:geometric1}
The associated graded algebra $\mathrm{gr}_{\mathcal V_\Psi}\mathbb K[\hat X_A]$ is finitely generated. The variety $X_0=\mathrm{Proj} (\mathrm{gr}_{\mathcal V_\Psi}\mathbb K[\hat X_A])$ is reduced, it is the irredundant union of the toric varieties $X_Q=\mathrm{Proj}(\mathbb K[S_Q])$, where $Q$ runs over the set of all polytopes  in $\mathcal Q$. The variety $X_0$ is equidimensional of dimension $\dim X_A$. In particular, it depends only on $\mathcal Q$ and not on the choice of $\Psi\in C(\mathcal Q,N)\cap M_{N,r}(\mathbb{Q})$.
\end{theorem}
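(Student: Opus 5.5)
The approach is to describe the multiplication in $\mathrm{gr}_{\mathcal V_\Psi}\mathbb K[\hat X_A]$ explicitly on the basis $\{\bar f_u\mid u\in S\}$ provided by Lemma~\ref{lemma:extremal:value}. Since $\mathcal V_\Psi(f_u)=g_\Psi(u)$ and $f_uf_{u'}=f_{u+u'}$, the product $\bar f_u\cdot\bar f_{u'}$ in the graded algebra is $\bar f_{u+u'}$ if $g_\Psi(u+u')=g_\Psi(u)+g_\Psi(u')$, and $0$ otherwise, because then $g_\Psi(u+u')>g_\Psi(u)+g_\Psi(u')$. Since $\Psi\in C(\mcQ,N)$, the domains of linearity of $g_\Psi$ are exactly the polytopes of $\mcQ$. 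Corollary~\ref{cor:linearity:only:in:domain} therefore gives
\[
\bar f_u\bar f_{u'}=\begin{cases}\bar f_{u+u'} & \text{if } u,u'\in K(Q)\text{ for some }Q\in\mcQ,\\ 0&\text{otherwise.}\end{cases}
\]
So $\mathrm{gr}_{\mathcal V_\Psi}\mathbb K[\hat X_A]$ is isomorphic to the ``fan-like'' semigroup ring $\mathbb K[S,\mcQ]$ whose basis is indexed by $S=\bigcup_Q S_Q$, with this multiplication rule. This description depends only on $\mcQ$, which gives the final claim of independence of $\Psi$ at once.

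Next comes reducedness and the decomposition. For each $Q$ there is a surjection $\pi_Q:\mathbb K[S,\mcQ]\to\mathbb K[S_Q]$ sending $\bar f_u\mapsto f_u$ if $u\in K(Q)$ and to $0$ otherwise; the multiplication rule shows this is an algebra map. The product map $\prod_Q\pi_Q$ is injective, since every $u\in S$ lies in some $K(Q)$. Thus the graded algebra embeds into a finite product of domains $\mathbb K[S_Q]$, hence is reduced, and its minimal primes are among the kernels $\ker\pi_Q$. This yields $X_0=\bigcup_Q X_Q$ set-theoretically and scheme-theoretically, with $X_Q=\mathrm{Proj}\,\mathbb K[S_Q]$. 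Irredundancy follows because the $Q$ are distinct full-dimensional polytopes: the kernels $\ker\pi_Q$ are pairwise incomparable, since a lattice point of $S$ in the interior of $K(Q)$ is nonzero only under $\pi_Q$, and such a point exists because the full-dimensional $K(Q)$ contains the elements of $A_Q$ and their positive sums. Each $\mathbb K[S_Q]$ has Krull dimension $n=\dim\hat X_A$, so $X_0$ is equidimensional of dimension $\dim X_A$.

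Finally, for finite generation, each $S_Q=S\cap K(Q)$ is the intersection of a finitely generated monoid with a rational polyhedral cone, hence finitely generated by Gordan-type arguments. Take a finite generating set $G_Q$ of each $S_Q$, and let $G=\bigcup_Q G_Q$. Any $u\in S$ lies in some $S_Q$ and is a sum of elements of $G_Q$, all inside $K(Q)$, so by the multiplication rule $\bar f_u$ is the corresponding product of the $\bar f_g$. Hence $\{\bar f_g\mid g\in G\}$ generates the graded algebra.

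The main obstacle I expect is the finite generation of $S_Q$, since $S$ need not be saturated. I would prove it as follows. $S_Q$ is the set of solutions in $S$ to finitely many linear inequalities. It is the image of the monoid $\{\alpha\in\mathbb Z_{\ge0}^r\mid \sum\alpha_i\chi_i\in K(Q)\}$, which is finitely generated because it is the lattice points of a rational cone (Gordan's lemma). The other delicate point is checking carefully that a strict inequality $g_\Psi(u+u')>g_\Psi(u)+g_\Psi(u')$ really makes the product vanish in the leaf. This follows since $\mathcal V_\Psi(f_{u+u'})=g_\Psi(u+u')$ exactly, by Lemma~\ref{lemma:extremal:value}.
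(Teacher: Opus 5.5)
Your proposal is correct and follows essentially the paper's proof: the same multiplication rule obtained from Corollary~\ref{cor:linearity:only:in:domain}, one prime per $Q$ with quotient $\mathbb K[S_Q]$, and irredundancy via a point in the interior of $K(Q)$ (this is what the paper's $\bar f_Q$ is). Your Gordan argument supplies the finite generation of $S_Q$, which the paper only asserts.

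One step needs a further argument: multiplicativity of $\pi_Q$ does not follow from the multiplication rule alone. You also need that $u,u'\in K(Q')$ and $u+u'\in K(Q)$ force $u,u'\in K(Q)$. This holds because $K(Q)\cap K(Q')$ is a face of $K(Q')$, or by the proof of Corollary~\ref{cor:linearity:only:in:domain}. The paper avoids this check by defining $I(Q)$ as the annihilator of $\bar f_Q$, which is automatically an ideal.
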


\begin{proof}
It suffices to give the multiplication rule for the classes $\bar f_u$, $u\in S$. Since $g_\Psi$ is convex upward and the polytopes in $\mathcal Q$ are exactly the domains of linearity, one gets: $\bar f_{u}\bar f_{w}= \bar f_{u+w}$ if there is a polytope $ Q\in \mathcal Q$ such that $u,w\in S_Q$, and $\bar f_{u}\bar f_{w}=0$ otherwise. In particular, the algebra is finitely generated and has no nilpotent elements, so $X_0 = \textrm{Proj}(\mathrm{gr}_{\mathcal V_\Psi} \mathbb K[\hat X_A])$ is reduced. 

For $Q\in\mathcal Q$ set  $\bar f_Q :=\prod_{u} \bar f_u\in  \mathrm{gr}_{\mathcal V_\Psi} \mathbb K[\hat X_A]$, where the product is running over all elements $u\in S_Q$ such that $\deg f_u=1$. 
Let $I(Q)$ be the annihilator of $\bar f_Q$ in $\mathrm{gr}_{\mathcal V_\Psi} \mathbb K[\hat X_A]$. By the product rule, $\bar f_w \not\in I(Q)$ if and only if $w\in S_Q$, and hence the intersection $\bigcap_{Q\in\mathcal Q} I(Q)= (0)$ is the zero ideal. Moreover, the quotient $\mathrm{gr}_{\mathcal V_\Psi} \mathbb K[\hat X_A]/I(Q)$  is isomorphic to $\mathbb K[S_Q]$, an algebra without zero-divisors. Hence $I(Q)$ is a prime ideal. To show that $I(Q)$ is a minimal prime ideal, suppose $I\subsetneq I(Q)$ is an ideal. Then there exists an element $u\in  S$, $u\not\in S_Q$, such that $\bar f_u\not\equiv 0\pmod I$. But $\bar f_Q \bar f_u = 0$, so the quotient $\mathrm{gr}_{\mathcal V_\Psi} \mathbb K[\hat X_A]/I$ has a non-trivial zero divisor and $I$ is hence not a prime ideal. It follows that $\bigcap_Q I(Q)= (0)$ is the minimal prime decomposition of the zero ideal in $\mathrm{gr}_{\mathcal V_\Psi} \mathbb K[\hat X_A]$. For a polytope $Q'\in \mathcal Q$, $\bar f_{Q'}$ is a non-zero element in the intersection $\bigcap_{Q\not=Q'} I(Q)$. This shows that the intersection $\bigcap_Q I(Q)$, running over all $Q \in\mathcal  Q$, is non-redundant. An irreducible component of $X_0$ is hence isomorphic to $X_Q = \textrm{Proj\,} (\mathbb K[S_Q])$ for some polytope $Q\in\mathcal Q$, it follows $\dim X_Q = \dim X_A$.
\end{proof}

\begin{remark}
Assume that the quasi-valuation $\mathcal{V}_\Psi$ is of full rank. For a fixed polytope $Q\in\mathcal{Q}$, the restriction of $\mathcal{V}_\Psi$ to the subalgebra of $\mathbb{K}[\hat{X}_A]$ generated by $f_u$ for $u\in S_Q$ is a valuation. Therefore the image $\Gamma:=\mathrm{im}(\mathcal{V}_\Psi)$ is a fan of monoids. Let $\mathbb{K}[\Gamma]$ be the associated fan algebra (see \cite{CFL1} for this notion). 

Let $u,u'\in S$. It follows from the piecewise-linear property of $g_{\Psi,\mathcal{Q}}$ that if there exists a polytope $Q\in\mathcal{Q}$ such that $u,u'\in S_Q$, then $\overline{f}_u\cdot \overline{f}_{u'}=\overline{f}_{u+u'}$ in the associated graded algebra $\mathrm{gr}_{\mathcal{V}_\Psi}\mathbb{K}[\hat{X}_A]$; otherwise the product of $\overline{f}_u$ and $\overline{f}_{u'}$ is zero. The property of $\mathcal{V}_\Psi$ in the above paragraph and the full rank assumption imply that the relations in $\mathbb{K}[\Gamma]$ are exactly the same as in the associated graded algbera. It then follows that the fan algebra $\mathbb{K}[\Gamma]$ is isomorphic to the associated graded algebra $\mathrm{gr}_{\mathcal{V}_\Psi}\mathbb{K}[\hat{X}_A]$.

Similar arguments as in \cite[Proposition 3]{And} can be applied to show that there exists a flat family $\pi:\mathcal{X}\to\mathbb{A}^1$ such that the special fiber $\pi^{-1}(0)$ is isomorphic to $\mathrm{Proj}(\mathbb{K}[\Gamma])$ (hence it is isomorphic to $\mathrm{Proj}(\mathrm{gr}_{\mathcal{V}_\Psi}\mathbb{K}[\hat{X}_A])$), and the generic fibers are isomorphic to $X_A$. A semi-toric variety is a union of toric varieties, and the flat degeneration $\pi^{-1}(0)$ is called a semi-toric degeneration of $X_A$.

This argument is only sketched in this paper for completeness, the Rees algebra and its higher rank generalizations will be studied systematically in a separate work \cite{CCFL3}.
\end{remark}

We restate Theorem \ref{coro:geometric1} in the language of Khovanskii basis for a quasi-valuation \cite[Section 15.5]{CFL1}.
A subset $\mathbb{B}\subseteq\mathbb{K}[\hat X_A]\setminus\{0\}$ is called a Khovanskii basis for a quasi-valuation $\nu:\mathbb{K}[\hat X_A]\to\mathbb{Q}^N\cup\{\infty\}$, if the image of $\mathbb{B}$ in $\mathrm{gr}_{\nu}\mathbb{K}[\hat X_A]$ generates the associated graded algebra $\mathrm{gr}_{\nu}\mathbb{K}[\hat X_A]$.

\begin{corollary}
For any regular subdivision $\mathcal{Q}$ and $\Psi\in C(\mathcal{Q},N)$, there exists a finite Khovanskii basis for $\mathcal{V}_{\Psi}$.
\end{corollary}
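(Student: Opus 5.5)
The plan is to produce an explicit finite set $\mathbb{B}$ and show that its classes generate $\mathrm{gr}_{\mathcal V_\Psi}\mathbb K[\hat X_A]$, using the multiplication rule from the proof of Theorem~\ref{coro:geometric1}. That rule says $\bar f_u\bar f_w=\bar f_{u+w}$ if $u,w\in S_Q$ for a common $Q\in\mathcal Q$, and $\bar f_u\bar f_w=0$ otherwise. For this to apply we need $\Psi$ rational, as required in Section~\ref{two:graded:algebras} for $\mathcal V_\Psi$ to be defined. I will take this as implicit in the statement, or else first replace $\Psi$ by a rational point of the (non-empty) open cone $C(\mathcal Q,N)$. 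That replacement is legitimate because Theorem~\ref{coro:geometric1} shows the graded algebra depends only on $\mathcal Q$.

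For each $Q\in\mathcal Q$ the monoid $S_Q=S\cap K(Q)$ is finitely generated. Indeed, $K(Q)$ is a rational polyhedral cone and $S$ is a finitely generated submonoid of the lattice, so $S_Q$ is the intersection of a finitely generated monoid with a rational polyhedral cone. This follows from Gordan's lemma applied to the preimage cone in $\mathbb Z_{\ge 0}^A$ under the surjection $\mathbb Z_{\ge0}^A\to S$, namely $\{\alpha\mid \sum\alpha_i\chi_i\in K(Q)\}$, which is a rational polyhedral cone intersected with the lattice and hence finitely generated. Choose a finite generating set $G_Q$ of $S_Q$, and set $\mathbb B=\{f_u\mid u\in G_Q,\ Q\in\mathcal Q\}$. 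This set is finite since $\mathcal Q$ is finite.

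It remains to check generation. By Lemma~\ref{lemma:extremal:value}, the classes $\bar f_u$, $u\in S$, form a basis of the graded algebra, so it suffices to obtain each $\bar f_u$. Given $u\in S$, it lies in $K(P)=\bigcup_Q K(Q)$, hence in some $S_Q$. Write $u=\sum u_i$ with $u_i\in G_Q$. Since all $u_i$ and all partial sums lie in the same $S_Q$, repeated application of the multiplication rule gives $\prod\bar f_{u_i}=\bar f_u$. Alternatively one can use Corollary~\ref{cor:linearity:only:in:domain} directly: $g_\Psi(\sum u_i)=\sum g_\Psi(u_i)$ because all the $u_i$ lie in $K(Q)$.

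The only real point to verify is the finite generation of $S_Q$; the rest is immediate from Theorem~\ref{coro:geometric1}, which already asserts finite generation. So the corollary could even be stated as a direct restatement, with $\mathbb B$ taken as lifts $f_u$ of any finite set of homogeneous generators $\bar f_u$.
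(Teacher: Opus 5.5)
Your proposal is correct and follows the paper's route. The corollary is read off from the finite generation in Theorem~\ref{coro:geometric1}, which rests on the multiplication rule $\bar f_u\bar f_w=\bar f_{u+w}$ for $u,w$ in a common $S_Q$ and $\bar f_u\bar f_w=0$ otherwise, and your Gordan's-lemma argument for the finite generation of each $S_Q$ makes explicit a step the paper leaves tacit.

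The detour of replacing $\Psi$ by a rational point is unnecessary, and not quite meaningful, since a Khovanskii basis refers to a fixed $\mathcal V_\Psi$. The paper defines $\mathcal V_\Psi$ only for rational $\Psi$, and your set $\mathbb B$ and the multiplication rule are the same for every $\Psi\in C(\mathcal Q,N)$ anyway.
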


\appendix
\section{Appendix: Some remarks on the order topology}\label{order:topology}

In the following we consider $\mathbb{R}^N$ as the topological space endowed with the order topology.
More precisely, for $\ba=(a_N,\ldots,a_1)$ and $\bb=(b_N,\ldots,b_1)\in\mathbb{R}^N$, $\ba<\bb$ 
with respect to the lexicographic order, consider the interval:
$$
(\ba,\bb):=\{\bc\in\mathbb{R}^N\mid \ba<\bc<\bb\}.
$$
The order topology on $\mathbb{R}^N$ has a basis consisting of the intervals $(\ba,\bb)$.

The following results on the order topology are not difficult, the proofs are left to the readers.

\begin{lemma}\label{Lem:TopGroup}
The additive group $(\mathbb R^N,+)$, endowed with the order topology, is a topological group. 
\end{lemma}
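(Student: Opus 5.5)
We must show two things: addition $\mathbb R^N\times\mathbb R^N\to\mathbb R^N$ is continuous for the order topology (with the product topology on the source), and negation $\ba\mapsto-\ba$ is continuous. The whole argument rests on one fact: the lexicographic order is invariant under translations and reversed by negation. That is, $\ba<\bb$ implies $\ba+\bc<\bb+\bc$ for every $\bc$, and $\ba<\bb$ implies $-\bb<-\ba$. Both follow directly from the definition, by comparing the first coordinate (from the top) where the two vectors differ.

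Negation is easy. The preimage under negation of a basic interval $(\ba,\bb)$ is $(-\bb,-\ba)$, again a basic open set. So negation is continuous; it is its own inverse, hence a homeomorphism. In the same way, translation by a fixed $\bc$ maps $(\ba,\bb)$ onto $(\ba+\bc,\bb+\bc)$, so translations are homeomorphisms. This is the form used in the proof of Corollary~\ref{cor:open:partition}.

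For continuity of addition, fix $\bc_1,\bc_2$ and a basic neighbourhood $(\ba,\bb)$ of $\bc_1+\bc_2$. We must find neighbourhoods $U_1\ni\bc_1$ and $U_2\ni\bc_2$ with $U_1+U_2\subseteq(\ba,\bb)$. The natural attempt is to split the gaps $\bb-(\bc_1+\bc_2)>\mathbf 0$ and $(\bc_1+\bc_2)-\ba>\mathbf 0$ in half. Set
\[
\boldsymbol\varepsilon:=\min\{\bb-(\bc_1+\bc_2),\,(\bc_1+\bc_2)-\ba\}>\mathbf 0,
\]
and take $U_i=(\bc_i-\tfrac12\boldsymbol\varepsilon,\ \bc_i+\tfrac12\boldsymbol\varepsilon)$. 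This requires two checks. First, $\tfrac12\boldsymbol\varepsilon>\mathbf 0$, which holds because the order is compatible with multiplication by positive scalars. Second, if $\bc_i-\tfrac12\boldsymbol\varepsilon<\bc_i'<\bc_i+\tfrac12\boldsymbol\varepsilon$ for $i=1,2$, then adding the two strict inequalities gives $\bc_1+\bc_2-\boldsymbol\varepsilon<\bc_1'+\bc_2'<\bc_1+\bc_2+\boldsymbol\varepsilon$. Adding strict inequalities is allowed in any totally ordered abelian group, by applying translation invariance twice. Hence $\bc_1'+\bc_2'\in(\ba,\bb)$.

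The only point needing care is that $\mathbb R^N$ with this order has no least positive element, so a positive "radius" can always be halved. This is exactly where the compatibility with positive scalar multiplication is used. Once that is noted, there is no real obstacle: the argument is the standard proof that an ordered abelian group (divisible, or at least without a least positive element) is a topological group under its order topology. A second minor point is that bases of the product topology consist of products of basic intervals, so it suffices to exhibit intervals $U_i$ as above.
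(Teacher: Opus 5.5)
Your proof is correct. Negation and translations are continuous because the lexicographic order is reversed by negation and preserved by translation. For joint continuity of addition, you halve the gap $\min\{\bb-(\bc_1+\bc_2),(\bc_1+\bc_2)-\ba\}$ and add the two strict inequalities. The paper gives no proof of this lemma (it is explicitly left to the reader), and yours is the routine verification one would expect.

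Your parenthetical hypothesis is unnecessary. Every totally ordered abelian group is a topological group in its order topology; if there is a least positive element, the topology is simply discrete. This does not affect your argument, since $\mathbb R^N$ is divisible.
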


\begin{lemma}
Given a fixed real number $\lambda$, the multiplication map $\mu_\lambda:\mathbb{R}^N\rightarrow \mathbb{R}^N$, $\ba\mapsto \lambda\ba$, is continuous in the order topology.
\end{lemma}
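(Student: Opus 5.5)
The plan is to check continuity of $\mu_\lambda$ directly against the basis of open intervals of the order topology, splitting into the three cases $\lambda>0$, $\lambda=0$ and $\lambda<0$. It suffices to show that the preimage of every basic interval $(\ba,\bb)$ is open.

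For $\lambda>0$, multiplication by $\lambda$ is a bijection of $\mathbb{R}^N$ preserving the lexicographic order. This holds because the order is compatible with multiplication by positive scalars, and the inverse $\mu_{1/\lambda}$ is also order preserving. Hence
\[
\mu_\lambda^{-1}\bigl((\ba,\bb)\bigr)=(\lambda^{-1}\ba,\lambda^{-1}\bb),
\]
which is again a basic open interval.

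For $\lambda=0$, the map is constant, so every preimage is either $\emptyset$ or $\mathbb{R}^N$. Both are open.

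For $\lambda<0$, I would write $\mu_\lambda=\mu_{|\lambda|}\circ\mu_{-1}$. It then remains to show that $\mu_{-1}\colon\ba\mapsto-\ba$ is continuous. Negation reverses the lexicographic order: if $\ba<\bb$, then at the first index $j$ where they differ we have $a_j<b_j$, hence $-a_j>-b_j$, so $-\ba>-\bb$. Therefore
\[
\mu_{-1}^{-1}\bigl((\ba,\bb)\bigr)=(-\bb,-\ba),
\]
which is open. Composing with the case $\lambda>0$ then gives continuity of $\mu_\lambda$.

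There is no real obstacle here. The only point needing care is verifying that positive scaling and negation respectively preserve and reverse the \emph{strict} lexicographic inequalities, which is exactly the first-differing-coordinate argument above. (Alternatively, for $\lambda\neq 0$ one could observe that $\mu_\lambda$ is a homeomorphism with inverse $\mu_{1/\lambda}$, by the same order argument.)
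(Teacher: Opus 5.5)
Your proof is correct and complete: positive scaling is an order automorphism, so $\mu_\lambda^{-1}((\ba,\bb))=(\lambda^{-1}\ba,\lambda^{-1}\bb)$; negation reverses the order, so $\mu_{-1}^{-1}((\ba,\bb))=(-\bb,-\ba)$; and $\lambda=0$ gives a constant map. The paper gives no proof of this lemma (it is explicitly left to the reader), and your direct check on the basis of open intervals, with care taken over strict inequalities, is the natural way to fill it in.
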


\begin{lemma}\label{finer_topology}
The order topology on $\mathbb{R}^N$ coincides with the Euclidean topology only for $N=1$, for
$N\ge 2$, the order topology is strictly finer.
\end{lemma}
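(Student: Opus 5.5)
To prove Lemma~\ref{finer_topology}, I would first show that the order topology on $\mathbb{R}^N$ is always at least as fine as the Euclidean topology, and then treat the cases $N=1$ and $N\ge 2$ separately.

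\textbf{Order topology is finer.} Let $\ba\in\mathbb{R}^N$ and let $U$ be a Euclidean neighborhood of $\ba$. It suffices to find an order interval containing $\ba$ inside a small Euclidean box around $\ba$. This fails for the obvious choice of endpoints $\ba\pm\varepsilon\ue$ when the perturbation sits in the top coordinate, because the lexicographic interval is then a whole slab. So I perturb only the \emph{last} coordinate $a_1$, the least significant one. Set
\[
\ba^\pm:=(a_N,\ldots,a_2,a_1\pm\varepsilon).
\]
If $\ba^-<\bc<\ba^+$, then the comparison with $\ba^-$ forces $c_N\ge a_N$ and the comparison with $\ba^+$ forces $c_N\le a_N$, so $c_N=a_N$. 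Inducting down the coordinates gives $c_i=a_i$ for all $i\ge 2$, and then $|c_1-a_1|<\varepsilon$. Hence $(\ba^-,\ba^+)$ is a Euclidean segment contained in $U$, so every Euclidean-open set is order-open.

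\textbf{The case $N=1$.} The lexicographic order is the usual order, and open intervals form a basis of both topologies, so the two topologies coincide.

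\textbf{The case $N\ge 2$.} It remains to exhibit an order-open set that is not Euclidean-open. By the computation above, the segment $(\ba^-,\ba^+)$ is order-open. It is not Euclidean-open, since it lies in a line and so has empty interior in $\mathbb{R}^N$ when $N\ge 2$. This shows the order topology is strictly finer.

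The only delicate point is the induction forcing $c_i=a_i$ for $i\ge 2$; the rest is routine.
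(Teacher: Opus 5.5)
Your proof is correct and complete. The key computation, that $(\ba^-,\ba^+)$ is exactly the Euclidean segment $\{(a_N,\ldots,a_2,c_1) \mid |c_1-a_1|<\varepsilon\}$, gives both that every Euclidean-open set is order-open and, for $N\ge 2$, an order-open set with empty Euclidean interior. The paper states this lemma without proof (it is left to the reader), so there is no argument to compare against, and your proof is the natural way to supply it.
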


We endow the space $M_{N,r}(\mathbb R):=\mathbb{R}^N\times\ldots\times \mathbb{R}^N$ with 
the product of the order topology.  A basis of the topology consists of the product of intervals $(\ba_1,\bb_1)\times \cdots\times (\ba_r,\bb_r)$ with $\ba_1<\bb_1$, $\ldots$, $\ba_r<\bb_r$. We call the topology the \emph{product order topology} on $M_{N,r}(\mathbb R)$. 

\begin{lemma}\label{lem:continuous}
Let $\uv=(v_r,\ldots,v_1)^T\in\mathbb R^r$ be a fixed vector. The natural map $\mu_\uv:M_{N,r}(\mathbb R)\rightarrow \mathbb{R}^N$, $\Psi\mapsto \Psi\cdot\uv$, is continuous with respect to the product order topology on $M_{N,r}(\mathbb R)$ and the order topology on $\mathbb R^N$.
\end{lemma}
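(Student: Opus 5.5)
Since $\mu_\uv(\Psi)=\sum_{\ell=1}^r v_\ell\,\Psi(v_\ell)$ is a finite sum of columns, each scaled by a fixed real, I will build continuity from the two appendix facts stated just before. By Lemma~\ref{Lem:TopGroup}, addition $\mathbb R^N\times\mathbb R^N\to\mathbb R^N$ is continuous for the product of order topologies. By the preceding lemma, multiplication by a fixed scalar $\lambda$ is continuous. Concretely, I write $\mu_\uv$ as the composite
\[
M_{N,r}(\mathbb R)=\mathbb R^N\times\cdots\times\mathbb R^N\xrightarrow{\ \mu_{v_1}\times\cdots\times\mu_{v_r}\ }\mathbb R^N\times\cdots\times\mathbb R^N\xrightarrow{\ \Sigma\ }\mathbb R^N ,
\]
where the first map rescales the $\ell$-th column by $v_\ell$ and $\Sigma$ adds the $r$ vectors.

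\textbf{Step 1: the rescaling map.} The first map is a product of continuous maps. It is therefore continuous for the product order topology on both sides, by the universal property of the product topology: each component is the projection onto the $\ell$-th factor followed by $\mu_{v_\ell}$.

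\textbf{Step 2: the summation map.} I show by induction on $r$ that $\Sigma$ is continuous. It is the composite of $\Sigma_{r-1}\times\mathrm{id}$ with binary addition, and both are continuous. For the first, I use that the product of continuous maps is continuous. For the second, I use Lemma~\ref{Lem:TopGroup}.

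\textbf{Step 3: the composite.} $\mu_\uv$ is a composite of continuous maps, hence continuous.

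\textbf{Main obstacle.} If the two appendix lemmas are to be checked rather than assumed, the only real work is verifying that addition is jointly continuous in the lexicographic order topology. Given $\ba+\bb\in(\bc,\bd)$, I would choose $\varepsilon>0$ in $\mathbb R^N$ with $\bc<\ba+\bb-2\varepsilon$ and $\ba+\bb+2\varepsilon<\bd$. This is possible because the order is dense: one can take $\varepsilon$ to be a small positive multiple of the last basis vector, or compare the two differences. Then $(\ba-\varepsilon,\ba+\varepsilon)+(\bb-\varepsilon,\bb+\varepsilon)\subseteq(\bc,\bd)$ by compatibility of the order with addition.

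Scalar multiplication is handled similarly. For $\lambda>0$ it is an order isomorphism, hence a homeomorphism. For $\lambda<0$ it is order-reversing, which again preserves the interval basis. For $\lambda=0$ it is constant.
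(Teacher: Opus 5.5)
Your proof is correct, and it is the argument the paper evidently intends: the paper leaves the proof to the reader and places the lemma right after the topological-group and scalar-multiplication lemmas, and you write $\mu_\uv$ as column-wise rescaling followed by iterated addition; your check of joint continuity of addition (e.g.\ with $\varepsilon=\delta/3$ for $\delta=\min\{\mathbf d-(\ba+\bb),\,(\ba+\bb)-\bc\}>\mathbf 0$) and of scalar multiplication also correctly supplies the appendix proofs the paper omits. One notational fix: here $v_\ell$ is a real coordinate of $\uv$, so write the $\ell$-th column as $\Psi_\ell$ rather than $\Psi(v_\ell)$, since in the paper $\Psi(v_\ell)$ means the column indexed by a point $v_\ell$ of $A$.
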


\begin{lemma}\label{lemma:cone:closed:open}
The closed half-space 
$H_{{\uv}}^\leq =\{\Psi\in M_{N,r}(\mathbb R)\mid \Psi\cdot\uv\le \mathbf 0\}$ is closed in the product order topology, and the open half-space $H_{ {\uv}}^< =\{\Psi\in M_{N,r}(\mathbb R)\mid \Psi\cdot\uv < \mathbf 0\}$ is open in the product order topology. The subspace $H_{ {\uv}}^0 =\{\Psi\in M_{N,r}(\mathbb R)\mid \Psi\cdot\uv = \mathbf 0\}$ is a closed subspace.
 \end{lemma}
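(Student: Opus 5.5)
To prove Lemma~\ref{lemma:cone:closed:open}, I would reduce everything to the target space $\mathbb R^N$ via the continuous map $\mu_\uv$ of Lemma~\ref{lem:continuous}. Since $H_{\uv}^\leq=\mu_\uv^{-1}(\{\ba\in\R^N\mid \ba\le\mathbf 0\})$, $H_{\uv}^<=\mu_\uv^{-1}(\{\ba\mid \ba<\mathbf 0\})$ and $H_{\uv}^0=\mu_\uv^{-1}(\{\mathbf 0\})$, it suffices to establish three facts about the order topology on $\R^N$. First, the ray $(-\infty,\mathbf 0)=\{\ba\mid\ba<\mathbf 0\}$ is open. Second, $\{\ba\mid \ba\le \mathbf 0\}$ is closed. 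Third, the point $\{\mathbf 0\}$ is closed.

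For the first fact, the ray is the union of the basic intervals $(\ba,\mathbf 0)$ over all $\ba<\mathbf 0$. Every $\bc<\mathbf 0$ lies in $(\bc-\ue,\mathbf 0)$ for any $\ue>\mathbf 0$, where I will take $\ue=(0,\ldots,0,1)^T$ and use compatibility of the order with addition. So the ray is open.

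For the second fact, the complement $\{\ba\mid \ba>\mathbf 0\}$ is open by the symmetric argument: every $\bc>\mathbf 0$ lies in $(\mathbf 0,\bc+\ue)$. Hence $\{\ba\mid \ba\le\mathbf 0\}$ is closed.

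For the third fact, $\{\mathbf 0\}$ is the intersection of the closed sets $\{\ba\le\mathbf 0\}$ and $\{\ba\ge\mathbf 0\}$, the latter being closed by the mirror argument. Equivalently, a linearly ordered set with the order topology is Hausdorff.

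Pulling back under the continuous $\mu_\uv$ then gives that $H_\uv^<$ is open and that $H_\uv^\leq$ and $H_\uv^0$ are closed. Moreover, $H_\uv^0$ is the kernel of the linear map $\mu_\uv$, hence a linear subspace, which gives the phrase ``closed subspace''.

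The only genuine input is continuity of $\mu_\uv$, which is Lemma~\ref{lem:continuous}. If I had to justify it, I would factor it as $\Psi\mapsto\sum_\ell v_\ell\Psi(v_\ell)$. Projection to each column is continuous by definition of the product topology, and scalar multiplication by a fixed real is continuous by the earlier lemma; for negative $\lambda$ it is an order-reversing bijection and so maps intervals to intervals. Addition is continuous by Lemma~\ref{Lem:TopGroup}, and a finite sum of continuous maps into a topological group is continuous.

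I expect the main (mild) obstacle to be checking that addition is jointly continuous in the lexicographic order topology, since $\R^N$ is not a topological vector space here. I would handle it by showing that for $\ba+\bb\in(\mathbf x,\mathbf y)$ one can shrink to intervals around $\ba$ and around $\bb$ of lexicographic ``radius'' $\ue'>\mathbf 0$ with $2\ue'$ below both gaps. This works because $\mathbf x<\ba+\bb<\mathbf y$ gives positive gaps $\ba+\bb-\mathbf x$ and $\mathbf y-\ba-\bb$, and halving preserves positivity. The argument uses only translation invariance and positive homogeneity of the order.
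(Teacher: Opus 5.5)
Your proof is correct and takes the route the paper intends. The paper leaves this lemma to the reader but states Lemma~\ref{lem:continuous} (continuity of $\mu_{\uv}$) just before it, so that $H_{\uv}^{<}$, $H_{\uv}^{\le}$ and $H_{\uv}^{0}$ arise, as in your argument, as preimages of the open ray $\{\mathbf a<\mathbf 0\}$, the closed ray $\{\mathbf a\le\mathbf 0\}$ and the closed point $\{\mathbf 0\}$ of $\R^N$, with $H_{\uv}^{0}=\ker\mu_{\uv}$ a linear subspace; your sketches of the continuity of $\mu_{\uv}$ and of joint continuity of addition (choosing $\mathbf e'>\mathbf 0$ with $2\mathbf e'$ below both lexicographic gaps) are also sound and fill in what the paper leaves unproved.
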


\end{document}